\def\ker{\operatorname{Ker}}
\def\sup{\operatorname{sup}}
\def\max{\operatorname{max}}
\def\min{\operatorname{min}}
\def\d{\operatorname{d}}
\def\LC{\operatorname{C_{c}^\infty}(G_A)}
\def\Aut{\operatorname{Aut}}
\def\Ddots{\mathinner{\mkern1mu\raise\p@
\vbox{\kern7\p@\hbox{.}}\mkern2mu
\raise4\p@\hbox{.}\mkern2mu\raise7\p@\hbox{.}\mkern1mu}}
\numberwithin{equation}{section}
\tikzstyle{vertex}=[circle]
\tikzstyle{goto}=[->,shorten >=1pt,>=stealth,semithick]
\newtheorem{thm}{Theorem}[section]
\newtheorem{cor}[thm]{Corollary}
\newtheorem{lemma}[thm]{Lemma}
\newtheorem{prop}[thm]{Proposition}
\newtheorem{thm1}{Theorem}
\theoremstyle{definition}
\newtheorem{definition}[thm]{Definition}
\theoremstyle{remark}
\newtheorem{remark}[thm]{Remark}
\newtheorem{example}[thm]{Example}
\begin{document}

\title[Heat and isometries on CK--algebras]{Heat operators and isometry groups of Cuntz--Krieger algebras}
\author{Dimitris M. Gerontogiannis, Magnus Goffeng, and Bram Mesland}

\address{\scriptsize Dimitris Michail Gerontogiannis, IM PAN,  Jana i Jedrzeja \'Sniadeckich 8, 00-656 Warszawa, Poland}
\email{dgerontogiannis@impan.pl}

\address{\scriptsize Magnus Goffeng, Centre for Mathematical Sciences, Lund University, Box 118, 221 00 LUND, Sweden}
\email{magnus.goffeng@math.lth.se}

\address{\scriptsize Bram Mesland, Leiden University, Gorlaeus Gebouw, BW-vleugel, Einsteinweg 55, 2333 CC Leiden, The Netherlands}
\email{b.mesland@math.leidenuniv.nl}

\keywords{Noncommutative geometry, spectral triples, heat operators, isometry groups, Cuntz--Krieger algebras, Riesz potentials}
\subjclass[2020]{37B10, 46L80, 47A10, 58B34}

\maketitle

\begin{abstract}
This paper introduces heat semigroups of topological Markov chains and Cuntz--Krieger algebras by means of spectral noncommutative geometry. Using recent advances on the logarithmic Dirichlet Laplacian on Ahlfors regular metric-measure spaces, we construct spectral triples on Cuntz--Krieger algebras from singular integral operators. These spectral triples exhaust odd $K$-homology and for Cuntz algebras we can compute their heat operators explicitly as Riesz potential operators. We also describe their isometry group in terms of the automorphism group of the underlying directed graph and prove that the Voiculescu noncommutative topological entropy vanishes on isometries. 
\end{abstract}

\section{Introduction}
The geometry and topology of manifolds can be encoded spectrally by means of suitable (pseudo)differential operators, such as the Laplace, Hodge--deRham or Dirac operator. This functional analytic approach leads to Connes’ paradigm of noncommutative geometry \cite{connesredbook}, which extends spectral geometry to the realm of noncommutative $C^{*}$-algebras. Here, the central notion is that of a spectral triple $(\mathcal{A},H,D)$. This consists of a pre-$C^{*}$-algebra $\mathcal{A}$, a Hilbert space $H$ and a self-adjoint operator $D$ with compact resolvent satisfying some compatibility axioms. The prototypical commutative example is that of a Dirac-type operator on a compact manifold. However, there are only few examples of large classes of noncommutative spectral triples with non-trivial topological as well as spectral features.

The class of Cuntz--Krieger algebras \cite{CK}, a family containing highly noncommutative $C^{*}$-algebras arising from topological Markov chains, is such a notable example. Early on, Cuntz--Krieger algebras were shown to satisfy $KK$-duality \cite{KPDual}, a topological feature that in the commutative setting is enjoyed by finite complexes. The noncommutative geometry of Cuntz--Krieger algebras was explored thereafter in \cite{GMCK, GMR ,GMON} where the authors obtained spectral triples that exhaust odd $K$-homology. Building on this, a study of the isometry group of Cuntz algebras was pursued in \cite{contirossi}. Moreover, related constructions have been carried out in the commutative case of the underlying shift space \cite{antoineputnam,farsietal} with a focus on metric properties and wavelet bases.

In this paper we revisit the noncommutative geometry of Cuntz--Krieger algebras in the spirit of semiclassical methods and spectral geometry. The key tools come from the recent developments made by the first and third listed author, in the harmonic analysis of Ahlfors regular metric-measure spaces \cite{GerMes}, and ideas \cite{GMON} put forward by the second and third listed author for Cuntz algebras. Notably, our present more geometric approach produces novel results already for the Cuntz algebras.

\subsection{Background}
The primordial idea of studying dynamical systems through the lens of operator algebras goes back almost a century to von Neumann, with operator algebraic methods imbuing his proof of the ergodic theorem. Nowadays, this line of thought is well-established throughout mathematics \cite{bostconnes,CK,GPS,lacanesh,voicent}. Noncommutative geometry extends this philosophy to a natural framework in which both rigid topological and fine analytic properties of dynamical systems can be studied on equal footing. 

The last decade has seen substantial progress regarding the noncommutative topology of non-isometric dynamical systems, most notably the hyperbolic dynamical systems encoded by the Ruelle algebras \cite{ruelle}. These are $C^*$-algebras satisfying KK-duality \cite{gerowhitzach,kamputwhit}. The Cuntz--Krieger algebras \cite{CK} arise as the zero-dimensional case in the class of Ruelle algebras.

More precisely, the Cuntz--Krieger algebra $O_{A}$ associated to a topological Markov chain with adjacency matrix $A$ encapsulates key aspects of the (singular) orbit space of the associated subshift of finite type. The noncommutative topology of Cuntz--Krieger algebras is captured through analytic $K$-homology and recovers the Bowen--Franks invariants \cite{CK}. Accessing this information computationally requires the construction of a spectral geometry compatible with non-isometric dynamics of the underlying subshift of finite type. 

Questions of dynamical systems and equivariance permeate noncommutative geometry \cite{connesredbook} and its applications. Nonetheless, a complete  reconciliation with spectral theory and dynamical systems is yet to be achieved. In fact, this is a hard problem that so far is satisfactorily understood only for isometric actions. 

The Baum--Connes conjecture \cite{bch} implies that up to homotopy all actions are isometric and proper. In light of the Baum--Connes conjecture’s veracity for large classes of groups \cite{higkasp,laffa}, it poses no restriction to restrict to isometric, proper actions for the purposes of index theory and $K$-theory.
However, for the purpose of explicit computation, the problem of obtaining a spectral geometry in the non-isometric case persists. 
The objective here is a systematic construction of spectral triples that gives access to their index theory via a $K$-homology class, their symmetries in the form of an isometry group \cite{park}, their heat kernel and heat trace asymptotics, and the associated Connes distances, also known as quantum or Monge--Kantorovi\v{c} metrics. 

The noncommutative geometry of Cuntz--Krieger algebras was first studied in \cite{GMCK}, motivated by the study of dimensional properties in noncommutative geometry. The study of spectral noncommutative geometry on these algebras was later pursued in \cite{GMR,GMON,DGMW,GRU}. However, those approaches did not provide a complete picture. The purpose of this paper is to refine the above mentioned works in a systematic way that has the potential to extend to dynamical systems of positive dimension.

\subsection{Main results}
The main results of the paper concern the noncommutative geometry of Cuntz--Krieger algebras $O_A$, which arises from a distinguished metric-measure space structure on the Deaconu--Renault groupoid $G_A$ with unit space the associated topological Markov chain $\Sigma_A$. The reader can find a review of topological Markov chains in Subsection \ref{subsec:topmark} and of the associated dynamics as encoded in the Deaconu--Renault groupoid $G_A$ in Subsection \ref{subsec:deac}. We decompose the groupoid $G_A$ as a disjoint union of bisections
\begin{equation}
\label{eq:bisec}
G_A=\bigsqcup_{\gamma\in I_{A}}G_\gamma,
\end{equation}
such that the metric-measure space structure on $\Sigma_{A}$ lifts to each bisection $G_\gamma$. We so arrive at an (extended) metric $\mathrm{d}_{G_A}$ and a measure $\mu_{G_A}$ fitting into an Ahlfors regular metric-measure space $(G_A,\mathrm{d}_{G_A},\mu_{G_A})$. The measure $\mu_{G_A}$ is such that $L^2(G_A,\mu_{G_A})$ canonically identifies with the GNS-representation of $O_A$ associated to the KMS-state of the gauge action.  

As an Ahlfors regular metric-measure space, $(G_A,\mathrm{d}_{G_A},\mu_{G_A})$ admits the logarithmic Dirichlet Laplacian \cite{GerMes}. The resulting operator $\Delta$, densely defined in $L^2(G_A,\mu_{G_A})$, takes the form
\begin{equation*}
\Delta f(g_1)=\int_{G_A} \frac{f(g_1)-f(g_2)}{\mathrm{d}_{G_A}(g_1,g_2)^\delta} \d \mu_{G_A}(g_2),
\end{equation*}
with $\delta$ being the fractal dimension of the underlying shift space. In addition, the decomposition \eqref{eq:bisec} of $G_{A}$ allows for the construction of a proper, continuous function 
\begin{equation}
\ell:G_{A}\to\mathbb{N}, \quad \ell|_{G_{\gamma}}:=|\gamma|,
\end{equation}
with $|\gamma|$ a naturally defined length on the index set $I_{A}$. The function $\ell$ gives rise to a suitable potential $V$. We so produce an odd spectral triple $(\LC,L^2(G_A,\mu_{G_A}),-\Delta+V)$ over $O_A$, with $\LC$ being the convolution $*$-algebra of compactly supported locally constant functions. The detailed construction of $D=-\Delta+V$ is found in Subsection \ref{subsec:spectrip}. 

The $K$-homology class of $(\LC,L^2(G_A,\mu_{G_A}),D)$ is in general non-trivial. Further, in the spirit of \cite{GMCK}, by varying appropriate parameters we obtain a family of spectral triples that exhausts the $K$-homology group $K^1(O_A)$, see Proposition \ref{lknlknlknad2}. A key novelty in the present paper is the fine spectral analysis of the operator $D$, provided by the following theorem.

\begin{thm1}
\label{firstmain}
The Hilbert space $L^2(G_A,\mu_{G_A})$ admits an orthonormal basis $$(\mathrm{e}_{\gamma},\mathrm{e}_{(\gamma,\nu,j)})_{(\gamma,\nu,j)\in \mathfrak{I}_A}\subset \LC\subset L^2(G_A,\mu_{G_A}),$$ of eigenfunctions for the operator $D$. The eigenvalues are given by 
\[D\mathrm{e}_\gamma=\mathrm{sgn}(\gamma) |\gamma|\mathrm{e}_\gamma,\quad D \mathrm{e}_{(\gamma,\nu,j)}=-(|\gamma|+\lambda_{s(\gamma)}^A(\nu))\mathrm{e}_{(\gamma,\nu,j)},\]
where the map $\mathrm{sgn}:I_{A}\to \{\pm 1\}$ is determined by Equation \eqref{eq:D}. 
The numbers $\lambda_{s(\gamma)}^A(\nu)$ are eigenvalues of the logarithmic Dirichlet Laplacian on $G_A$ and are explicitly given by 
$$\lambda_{s(\gamma)}^A(\nu)=1+\lambda_{\max}v_{\nu_1}\sum_{k=0}^{|\nu|-|s(\gamma)|-1}u_{\nu_{|\nu|-k-1}}(1-P_{\nu_{|\nu|-k-1}, \nu_{|\nu|-k}}),
$$ 
with $u$ and $v$ the Perron--Frobenius eigenvectors for $A$ and its transpose $A^T$, respectively, and $P_{i,j}$ is the transition probability of going from \textit{vertex} $i$ to \textit{vertex} $j$ in the finite directed graph associated to $A$. 
\end{thm1}

The reader can find the main computation underlying Theorem \ref{firstmain} in Subsection \ref{specsubsec}. It is of spectral interest to know that for fixed $\gamma$ the spectrum of $D$ restricted to the span of $(\mathrm{e}_{\gamma},\mathrm{e}_{(\gamma,\nu,j)})_{(\nu,j)}$ grows exactly logarithmically, see Proposition \ref{asjnkjnada}. In particular, the operator $D$ exhibits spectral similarity to the signed logarithm of an operator with Weyl like spectral behaviour, such as a Dirac operator on a finite-dimensional geometry. For the Cuntz algebras $O_{N}$, we prove that this is indeed the case. Here the topological Markov chain is the full $N$-shift, and the homogeneity of the example allows for explicit computations, see Theorem \ref{heatthm}.

\begin{thm1}
For the Cuntz algebra $O_N$ the heat operator $\mathrm{e}^{-t|D|}$ of $D$ is defined for every $t>\log(N)(1-N^{-1})^{-1}$ by the kernel $$K_t\in L^1(G_A\times G_A,\mu_{G_A}\times \mu_{G_A})\cap C(G_A\times G_A),$$ given by 
$$K_t(g_1,g_2)=\begin{cases}e^{-t|\gamma|}\left(h_\gamma(t)+H_\gamma(t)\mathrm{d}_\gamma(g_1,g_2)^{-\delta+\delta't}\right), & \text{if  }\; g_1,g_2\in G_{\gamma}\\
0, & \text{otherwise}.\end{cases}$$ 
Here $h_\gamma$ and $H_\gamma$ are explicit rational functions in $N$, $\mathrm{e}^{-tN^{-1}}$ and $\mathrm{e}^{-t}$ given in Proposition \ref{lnlknkojnad}. The distance $d_\gamma$ is the distance on the topological Markov chain pulled back to the bisection $G_\gamma$ via the source map. In particular, the heat operators $\mathrm{e}^{-t|D|}$ are Riesz potential operators. 
\end{thm1}

In Section \ref{secljnjasn}, we compute the isometry group of $(\LC,L^2(G_A,\mu_{G_A}),D)$ and study the noncommutative topological entropy as defined by Voiculescu. The reader can note that classically, the topological entropy of isometries vanishes. The results of Section \ref{secljnjasn} can be summarized in the following theorem.

\begin{thm1}
The isometry group $\mathsf{G}_D$ of the spectral triple $(\LC, L^2(G_A,\mu_{G_A}), D)$ is a compact Lie group isomorphic to $\mathbb T^N\rtimes \Aut(A)$, where $\Aut(A)$ is the automorphism group of the directed graph associated to $A$. Moreover, the Voiculescu noncommutative topological entropy vanishes on automorphisms of $O_A$ belonging to $\mathsf{G}_D$.
\end{thm1}

The reader should note that our computation of the isometry group does not contradict the main result in \cite{contirossi} as the two results concern different spectral triples. Further, we note that the situation on $O_A$ mimics the classical case of metric spaces where it also holds that the entropy of isometries vanishes. The first part of the preceding theorem can be found in the main text as Theorem \ref{thm:Isom_group} and the second part as Theorem \ref{vanishihsidnadin}. Moreover, the preceding theorem also says something about the strength of the isometry group of $(\LC, L^2(G_A,\mu_{G_A}), D)$ as an invariant: the isometry group recovers the number $N$ of Cuntz--Krieger generators of $O_A$ and the automorphism group of the associated directed graph. However, since many graphs have trivial automorphism groups, this is far from a complete invariant.\\

The techniques in this paper extend those put forward in \cite{GMON} and make essential use of the logarithmic Dirichlet Laplacian developed in \cite{GerMes}. In the course of our work, we noticed a subtle but crucial issue in the definition of the metric-measure space structure from \cite{GMON} on the groupoid $G_{A}$. We address the issue in Remark \ref{rem:Unbounded_com}, and the present paper retrieves the results announced in \cite{GMON} for the Cuntz algebras $O_N$.

\subsection*{Acknowledgements} The authors would like to thank Amaury Freslon, Adam Rennie, Adam Skalski, and Joachim Zacharias for useful discussions and correspondence. Moreover, they would like to thank the referees for their careful reading and their helpful comments. The first listed author acknowledges support from the EU Staff Exchange project 101086394 \enquote{Operator Algebras That One Can See}. The second listed author was supported by the Swedish Research Council Grant VR 2018-0350.
\subsection*{Notation}Let $F$ and $G$ be real valued functions on some parameter space $Z$. We write $F \lesssim G$ whenever there is a constant $M>0$ such that for all $z\in Z$ we have $F(z) \leq M G(z)$. Similarly, we define the symbol $\gtrsim$ and write $F\simeq G$ if $F \lesssim G$ and $F \gtrsim G$. 

\section{Preliminaries}
\label{prelsec}
In this section we recall topological Markov chains and their Deaconu--Renault groupoids. We equip the latter with a metric-measure space structure reflecting the underlying dynamics.

\subsection{Topological Markov chain}
\label{subsec:topmark}
We first recall the basics on topological Markov chains, also known as subshifts of finite type. The reader can find more on this topic in \cite{Katok_Has,marcuslind}. Topological Markov chains are of importance to hyperbolic dynamical systems as the former are simpler, zero-dimensional and ``symbolically code'' the latter. From an ergodic and measure theoretic perspective much can be said about hyperbolic dynamical systems from the theory of topological Markov chains, see \cite{Katok_Has,marcuslind}. In this subsection, we first describe topological Markov chains and their metric, define their canonical measure and show how the two  fit together into an Ahlfors regular metric-measure space.

Let $A$ be a $N\times N$-matrix with entries in $\{0,1\}$. We will tacitly assume that $A$ is \emph{primitive}, namely that there is a $k\in \mathbb{N}$ such that all entries of $A^k$ are positive. A string $\alpha=\alpha_1\ldots \alpha_n$ with each $\alpha_i\in \{1,\ldots, N\}$ will be called a \emph{word} and its \textit{length} is $|\alpha|:=n$. We call $\{1,\ldots, N\}$ the \textit{alphabet}. The word $\alpha$ is \textit{admissible} if $A_{\alpha_i,\alpha_{i+1}}=1$ for $i=1,\ldots, |\alpha|-1$. We will make frequent use of the empty string $\alpha$ for which $n=0$, which we will call the \emph{empty word} ${\o}$ of length $|{\o}|=0$. We note that the set $V_A$ of admissible finite words together with the empty word form a rooted tree, with root ${\o}$ and a vertex for each admissible word $\alpha$, and an edge going from $\alpha$ to $\alpha j$ if and only if $\alpha j$ is admissible.

We consider the closed subset $\Sigma_A\subset \{1,\ldots,N\}^{\mathbb N}$ defined by 
$$
\Sigma_A=\{x=(x_n)_{n\in \mathbb N}\in \{1,\ldots,N\}^{\mathbb N}: A_{x_n,x_{n+1}}=1\}.
$$
Here $\{1,\ldots,N\}^{\mathbb N}$ is equipped with the product topology induced from the discrete topology on the alphabet $\{1,\ldots, N\}$. By Tychonoff's theorem, $\{1,\ldots,N\}^{\mathbb N}$ is compact and therefore so is $\Sigma_A$. Motivated by the preceding paragraph, we call $\Sigma_A$ the \textit{space of admissible infinite words}. A basis for the topology on $\Sigma_A$ is given by the cylinder sets associated to finite words $\alpha=\alpha_1\ldots \alpha_n$ defined as
$$
C(\alpha):=\{x\in \Sigma_A: x_i=\alpha_i,\, \text{for}\, 1\leq i\leq n\}.
$$
By convention $C(\o):=\Sigma_A$, the cylinder set of the empty word. Note that $C(\alpha)=\emptyset$ unless $\alpha\in V_A$. The cylinder sets $C(\alpha)$ form a neighborhood basis of open compact sets, so $\Sigma_A$ is totally disconnected. Since $A$ is primitive, $\Sigma_A$ has no isolated points. In particular, $\Sigma_A$ is a Cantor space. Geometrically, the reader can think of $\Sigma_A$ as the Gromov boundary of the rooted tree of finite words $V_A$ and the cylinder sets $C(\alpha)$ as the shadow on the Gromov boundary of all rays in $V_A$ going from ${\o}$ to infinity passing through $\alpha$.

The dynamics on the space $\Sigma_A$ is given by the shift map defined by 
$$\sigma_A:\Sigma_A\to \Sigma_A:\quad \sigma_A(x_1x_2x_3\cdots):=x_2x_3\cdots.$$
The map $\sigma_A$ is a surjective local homeomorphism. Indeed, $\sigma_A$ restricts to a homeomorphism from any cylinder set $C(\alpha)$, with $\alpha\in V_A\setminus \{{\o}\}$, to its image. The topological Markov chain associated with the primitive matrix $A$ is the pair $(\Sigma_A,\sigma_A)$ of the Cantor set $\Sigma_A$ equipped with the shift map $\sigma_{A}$. Let us give two examples that we will refer to throughout the paper. 

\begin{example}
\label{onfirst}
If $A$ is the $N\times N$-matrix consisting only of $1$:s, then all words are admissible. As is common in the literature, for this example we shall replace the notation involving $A$ with $N$.  For instance, we have that 
$$V_N=\bigcup_{k=0}^\infty \{1,\ldots, N\}^k, \quad\mbox{and}\quad \Sigma_N=\{1,\ldots, N\}^{\mathbb N}.$$
This example is called the full $N$-shift.
\end{example}

\begin{example}
\label{freefirst}
We now describe the topological Markov chain giving rise to the free group acting on its Gromov boundary. For $d\geq 2$, take generators $a_1,\ldots, a_d$ for the free group $\mathbb F_d$ and consider the alphabet given by the symmetric generating set $S=\{a_1,a_1^{-1},\ldots, a_d,a_d^{-1}\}$. Further, let $A=(A_{xy})_{x,y\in S}$ be the $2d\times 2d$-matrix given by 
$$A_{xy}:=
\begin{cases}
1, \; &\mbox{if}\; x\neq y^{-1},\\
0, \; &\mbox{if}\; x= y^{-1}.
\end{cases}$$
We note that an admissible word is nothing else than a reduced word in the set $S$, so the graph of finite admissible words for this matrix coincides with the Cayley graph of the free group $\mathbb{F}_d$ on the $d$ generators $a_1,\ldots, a_d$. Consequently, for this example
$$\Sigma_A=\partial \mathbb{F}_d,$$
the Gromov boundary of $\mathbb{F}_d$. Moreover, the orbits of the group action of $\mathbb{F}_d$ on its Gromov boundary $\partial \mathbb{F}_d$ coincide with a weak version of tail equivalence classes for the dynamics of $\sigma_A$ on $\Sigma_A$, see Example \ref{freethird}.
\end{example}

We now turn to describing the metric structure on the topological Markov chain $(\Sigma_A,\sigma_A)$. Given $\lambda >1$, we equip $\Sigma_A$ with the ultrametric
$$
\mathrm{d}(x,y):=\lambda^{-\inf \{n-1: x_n\neq y_n\}},
$$
with the convention that $\inf \varnothing = \infty$. In geometric terms, $\mathrm{d}(x,y)=\lambda^{-k}$ precisely when there is a word $\alpha$ of length $k$ such that $x=\alpha x'$ and $y=\alpha y'$, for $x'$ and $y'$ two infinite admissible words starting with different letters. We note that for every $x\in \Sigma_A$, the $\mathrm{d}$-ball
$$
B(x,\lambda^{-n})=C(x_1\ldots x_n).
$$
In particular, for $n>1$ the shift map restricts to a homeomorphism 
$$B(x,\lambda^{-n})\to B(\sigma_A(x),\lambda^{-n+1}).$$ 
The well known fact that the topological Markov chain $(\Sigma_A,\sigma_A)$ is a locally expanding dynamical system now follows. 

The canonical measure on the topological Markov chain $(\Sigma_A,\sigma_A)$ is the Parry measure, which is defined as follows. Let $\lambda_{\max}$ be the Perron--Frobenius eigenvalue of $A$. This is the largest eigenvalue of $A$ whose eigenvector $u$ has positive entries. Since $A$ is a primitive matrix we have $\lambda_{\max}>1$. We write $v$ for the Perron--Frobenius eigenvector of the transpose matrix $A^{T}$. We assume that $u,v$ are normalised so that $\sum_{i=1}^{N}u_iv_i=1$. We set $p_i=u_iv_i$, $i=1,\ldots,N$. The distribution $p=(p_1,\ldots, p_N)$ on the finite set $\{1,\ldots, N\}$ induces the Parry measure $\mu$ on $\Sigma_A$. Note that $\mu$ is $\sigma_A$-invariant. Moreover, if $\alpha=\alpha_1\ldots \alpha_n$ is an admissible finite word then 
\begin{equation*}
\mu(C(\alpha))= \frac{v_{\alpha_1}u_{\alpha_n}}{\lambda_{\max}^{n-1}}.
\end{equation*}

\begin{example}
\label{onsecond}
For the full $N$-shift (see Example \ref{onfirst}) we have $A^k=N^{k-1}A$, so Beurling's formula gives $\lambda_{\max}=N$. Moreover, since $u_i=v_i=N^{-1/2}$, the Parry measure is given by 
$$\mu(C(\alpha))=N^{-|\alpha|}, \quad \alpha \in V_N.$$
\end{example}

\begin{example}
\label{freesecond}
For the Gromov boundary $\partial \mathbb{F}_d$ (see Example \ref{freefirst}), we find $\lambda_{\max}=2d-1$ with the corresponding eigenvectors having entries $u_i=v_i=(2d)^{-1/2}$. In particular, the Parry measure is given by 
$$\mu(C(\alpha))=\frac{1}{2d(2d-1)^{|\alpha|-1}}, \quad \alpha \in \mathbb{F}_d\setminus \{1\}.$$
Here we identify the unit $1\in \mathbb{F}_d$ with the empty word in the graph of finite words.
\end{example}

The coordinates of $u,v$ are all positive, so we conclude that there is some $C\geq 1$ so that for every $x\in \Sigma_A$ and $0\leq r \leq 1$,
\begin{equation}\label{eq:Ahlfors_reg}
C^{-1}r^{\delta}\leq \mu(B(x,r))\leq Cr^{\delta}, \quad\mbox{for}\quad \delta=\log_{\lambda}(\lambda_{\max}).
\end{equation}
In other words, the measure $\mu$ is Ahlfors $\delta$-regular. The smallest $C$ satisfying \eqref{eq:Ahlfors_reg} is called the Ahlfors constant of the metric-measure space $(\Sigma_{A},\mathrm{d},\mu)$. Note that $\log(\lambda_{\max})>0$ is the topological entropy of $(\Sigma_A,\sigma_A)$. For more details see \cite{Katok_Has}. Moreover, by \cite[Lemma 2.5]{gatto}, for every $s>0$ it holds that 
\begin{equation}\label{eq:Ahlfors_int}
\int_{B(x,r)} \frac{1}{\d(x,y)^{\delta -s}} \d \mu (y) \simeq r^s.
\end{equation}

\subsection{Deaconu--Renault groupoid} 
\label{subsec:deac}
We use the groupoid approach to Cuntz--Krieger algebras \cite{deaacaneudoaod,Ren1,Ren2}, mainly presenting their structure as in \cite{GMCK,GMON,GMR}. The reader can find more details on groupoids and their $C^*$-algebras in \cite{Ren1}. We consider only {\'e}tale groupoids. The basic idea is to replace a discrete group $\Gamma$ acting on a space $X$ with a structure encoding both the group and the space. At a set theoretical level a groupoid $G$ consists of the following data: a set of arrows that by abuse of notation denote $G$, a unit space $G^{(0)}$ with source $s:G\to G^{(0)}$, range $r:G\to G^{(0)}$, unit $\iota:G^{(0)}\to G$, inversion $(\cdot){}^{-1}:G\to G$, and finally a multiplication $$m:G {}_{\hspace{0.1cm}s}\times_r G\to G,$$
satisfying a series of axioms mimicking a group acting on a space. We sometimes write a groupoid $G$ with base $G^{(0)}$ as $G\rightrightarrows G^{(0)}$. If $G$ and $G^{(0)}$ are locally compact Hausdorff spaces and the maps $s$, $r$ and $\iota$ are local homeomorphisms we say that $G$ is an {\'e}tale groupoid. If this is the case, $C_c(G)$ forms a $*$-algebra in the convolution product 
\begin{equation}
\label{eq:conv}
f_1\star f_2(g)=\sum_{m(g_1,g_2)=g} f_1(g_1)f_2(g_2),
\end{equation}
and the star operation $f^*(g)=\overline{f(g^{-1})}$. The reduced $C^*$-algebra generated by $C_c(G)$ is denoted by $C^*_{r}(G)$ and contains $C_{c}(G)$ as a dense subalgebra.

In the context of a topological Markov chain $(\Sigma_{A},\sigma_{A})$, we consider  the {\'e}tale groupoid 
$$
G_A=\{(x,n,y)\in \Sigma_A\times \mathbb Z \times \Sigma_A: \exists k\geq 0\,\, \text{such that}\,\,n+k\geq 0 \,\,\text{and}\,\,  \sigma_A^{n+k}(x)=\sigma_A^{k}(y)\}\rightrightarrows \Sigma_A.
$$
The source and range maps are 
$$
s(x,n,y):=y,\qquad r(x,n,y):=x,
$$
and the partial multiplication and inversion are given by 
$$m((x,n,y),(y,\ell,z))\equiv(x,n,y)(y,\ell,z):=(x,n+\ell,z),\quad (x,n,y)^{-1}:=(y,-n,x).$$ 
Already at this stage, we see how the groupoid $G_A$ encodes the dynamics of a topological Markov chain by keeping track of its orbits. In particular, $G_A$ encodes some sort of weak tail equivalence.

Further, consider the maps $\kappa:G_A\to \mathbb N\cup \{0\}$ and $c:G_A\to \mathbb Z$ defined as 
\begin{align*}
\kappa(x,n,y)&:=\min \left\{k\geq \max \{0,-n\}: \sigma_A^{n+k}(x)=\sigma_A^k(y)\right\},\\
c(x,n,y)&:=n.
\end{align*}
We equip $G_A$ with the coarsest topology that makes the maps $s,r,\kappa,$ and $c$ continuous. As such, $G_{A}$ is a locally compact, totally disconnected \'etale groupoid. The $C^*$-algebra $C^{*}_{r}(G_A)$ is the Cuntz--Krieger algebra of $A$, see Theorem \ref{renthm} below. The map $c:G_A\to\mathbb{Z}$ is a homomorphism and the formula
\begin{equation}
\label{eq:cocycle-action}
\alpha_{z}(f)(x,n,y):=z^{n}f(x,n,y),\quad f\in C_{c}(G_{A}), \,\, z\in\mathbb{T},\end{equation}
defines an action of the group $\mathbb{T}$ of unit complex numbers by $*$-automorphisms of the $C^{*}$-algebra $C^{*}_{r}(G_{A})$ (see \cite[Section II.5]{Ren1}).

A basis for the topology of $G_{A}$ is given by the bisections indexed by admissible finite words $\alpha$, $\beta$
$$C(\alpha,\beta):=\{(x,|\alpha|-|\beta|,y): x\in C(\alpha), y\in C(\beta), \sigma_A^{|\alpha|}(x)=\sigma_A^{|\beta|}(y)\}.$$

We shall decompose $G_A$ over a more refined collection of bisections than $C(\alpha,\beta)$. To this end, we first observe that for $g=(x,n,y)\in G_A$ one has $\sigma_A^{n+\kappa(g)}(x)=\sigma_A^{\kappa(g)}(y)$ and we recall the following important interplay between the maps $\kappa$ and $c$.

\begin{lemma}[{\cite[Proof of Proposition 5.1.4]{GMCK}}]
\label{lem:GMinteraction}
For every $(x,n,y)\in G_A$ it holds that 
$$\kappa(x,n,y)=
\begin{cases}
\kappa(\sigma_A(x),n-1,y), & \text{if } \kappa(x,n,y)+n\geq 1\\
\kappa(\sigma_A(x),n-1,y)-1, & \text{if } \kappa(x,n,y)+n=0.
\end{cases}
$$
\end{lemma}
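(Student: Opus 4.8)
\emph{Proof proposal.} The plan is to unwind the definition of $\kappa$ and rewrite $\kappa(\sigma_A(x),n-1,y)$ as a minimisation over the \emph{same} matching condition that defines $\kappa(x,n,y)$, with only the lower bound on the index changing. Recall that
$$\kappa(x,n,y)=\min\{k\geq \max\{0,-n\}:\sigma_A^{k+n}(x)=\sigma_A^k(y)\},$$
and, applying this with $\sigma_A(x)$ in place of $x$ and $n-1$ in place of $n$,
$$\kappa(\sigma_A(x),n-1,y)=\min\{k\geq \max\{0,1-n\}:\sigma_A^{k+n-1}(\sigma_A(x))=\sigma_A^k(y)\}.$$
The key observation is the semigroup identity $\sigma_A^{k+n-1}(\sigma_A(x))=\sigma_A^{k+n}(x)$, valid whenever $k+n-1\geq 0$. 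Since every feasible index in the latter minimisation satisfies $k\geq\max\{0,1-n\}\geq 1-n$, hence $k+n-1\geq 0$, this identity lets me replace the defining condition by $\sigma_A^{k+n}(x)=\sigma_A^k(y)$, so that
$$\kappa(\sigma_A(x),n-1,y)=\min\{k\geq \max\{0,1-n\}:\sigma_A^{k+n}(x)=\sigma_A^k(y)\},$$
which differs from $\kappa(x,n,y)$ only through the constraint $\max\{0,1-n\}$ versus $\max\{0,-n\}$.

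I would then split into cases according to the sign of $n$. If $n\geq 1$, both lower bounds equal $0$, so the two minimisations coincide and $\kappa(x,n,y)=\kappa(\sigma_A(x),n-1,y)$; moreover $\kappa(x,n,y)+n\geq 1$, placing us in the first branch. If $n\leq 0$, set $m=-n\geq 0$; the two bounds are $m$ and $m+1$, and one always has $\kappa(x,n,y)\geq m$. If $\kappa(x,n,y)\geq m+1$, then the minimum over $\{k\geq m\}$ already lies in $\{k\geq m+1\}$, so it equals the minimum over $\{k\geq m+1\}$, giving $\kappa(x,n,y)=\kappa(\sigma_A(x),n-1,y)$ with $\kappa(x,n,y)+n\geq 1$, again the first branch. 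Finally, if $\kappa(x,n,y)=m$, i.e.\ $\kappa(x,n,y)+n=0$, the matching condition at $k=m$ reads $x=\sigma_A^m(y)$; applying $\sigma_A$ shows the matching condition also holds at $k=m+1$, and since $m+1$ is the least feasible index for $\kappa(\sigma_A(x),n-1,y)$, we obtain $\kappa(\sigma_A(x),n-1,y)=m+1=\kappa(x,n,y)+1$, the second branch.

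I do not anticipate a genuine obstacle: the argument is pure bookkeeping. The only points requiring care are (i) invoking the semigroup identity $\sigma_A^{k+n-1}\circ\sigma_A=\sigma_A^{k+n}$ only on the range $k+n-1\geq 0$, which is exactly the feasible range, and (ii) keeping the two $\max\{0,\cdot\}$ lower bounds straight. It is also worth noting at the outset that $\kappa(x,n,y)+n\geq 0$ always holds (since $\kappa(x,n,y)\geq\max\{0,-n\}\geq -n$), which confirms that the two displayed cases $\kappa(x,n,y)+n\geq 1$ and $\kappa(x,n,y)+n=0$ are exhaustive.
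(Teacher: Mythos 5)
Your argument is correct and complete: the reindexing of the defining minimisation via $\sigma_A^{k+n-1}\circ\sigma_A=\sigma_A^{k+n}$ (valid on the whole feasible range), the comparison of the two lower bounds $\max\{0,-n\}$ and $\max\{0,1-n\}$, and the observation that $\kappa(x,n,y)+n\geq 0$ always holds together give exactly the claimed dichotomy. The paper itself offers no proof but cites \cite[Proof of Proposition 5.1.4]{GMCK}, and your bookkeeping is the same elementary unwinding of the definition of $\kappa$ used there, so nothing further is needed.
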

We now introduce our main tool for decomposing $G_{A}$.

\begin{definition}
\label{def: discr}
The \textit{discretization map} $\ell$ is defined to be the map
$$\ell=(\ell_r,\ell_s):G_A\to V_A\times (V_A\setminus \{{\o}\}),$$ 
that takes $g=(x,n,y)$ to the pair of words $(\ell_r(g),\ell_s(g))$ with $|\ell_r(g)|=n+\kappa(g)$ and $|\ell_s(g)|=\kappa(g)+1$ such that 
$$x=\ell_r(g)\sigma_A^{\kappa(g)}(y) \quad\mbox{and}\quad y=\ell_s(g)\sigma_A^{\kappa(g)+1}(y).$$
\end{definition}

The image of $\ell$ consists exactly of pairs $(\alpha,\beta)\in V_A\times (V_A\setminus \{{\o}\})$ such that either $\alpha={\o}$ or $A_{\alpha_{|\alpha|},\beta_{|\beta|}}=1$. We also note that $\ell$ is continuous since it is constant on the open subsets of each $C(\alpha,\beta)$ where the value of $\kappa$ is fixed. To simplify notation, we write
\begin{equation}\label{eq:indexset}
I_A:=\{\gamma=\alpha.\beta\in V_A\times (V_A\setminus \{{\o}\}):\; \alpha={\o}\;\mbox{or}\; A_{\alpha_{|\alpha|},\beta_{|\beta|}}=1\; \}.
\end{equation}
We think of the notation $\gamma=\alpha.\beta$ as $\gamma$ consisting of a two-sided finite word with the dot $.$ indicating the midpoint of the word. Geometrically, $\gamma=\alpha.\beta$ should be thought of as two strands $\alpha$ and $\beta$ in the rooted tree $V_A$ joined at the root and whose endpoints are compatible in the sense that $A_{\alpha_{|\alpha|},\beta_{|\beta|}}=1$. For $\gamma=\alpha.\beta\in I_A$, we write
$$r(\gamma):=\alpha\quad\mbox{and}\quad s(\gamma):=\beta.$$

For $\gamma=\alpha.\beta\in I_A$, we define 
\begin{equation*}
G_{\gamma}:=\ell^{-1}(\gamma)\equiv \left\{(x,n,y)\in G_A: \begin{matrix} x=\alpha\sigma_A^{|\beta|-1}(y), \; y=\beta\sigma_A^{|\beta|}(y),\; \mbox{and}\\ 
n=|\alpha|-|\beta|+1, \; \kappa(x,n,y)=|\beta|-1\end{matrix} \right\}.
\end{equation*}
Note that, in view of Lemma \ref{lem:GMinteraction}, $G_{\gamma}$ is always non-empty. Since $\ell$ is continuous, we arrive at a decomposition into clopen subsets 
\begin{equation}
\label{eq:G_decomp}
G_A=\bigsqcup_{\gamma\in I_A} G_{\gamma}.
\end{equation}

By construction, we have that 
\begin{equation*}
\kappa|_{G_\gamma}=|s(\gamma)|-1 \quad\mbox{and}\quad c|_{G_\gamma}=|r(\gamma)|-|s(\gamma)|+1.
\end{equation*}
Moreover, the following holds. 

\begin{lemma}\label{prop:source maps}
For each $\gamma\in I_A$ the clopen subset $G_\gamma\subseteq G_A$ is a bisection and the restricted source map 
$$s_{\gamma}:=s|_{G_\gamma}:G_{\gamma}\to C(s(\gamma)),$$
and restricted range map 
$$r_{\gamma}:=r|_{G_\gamma}:G_{\gamma}\to C(r(\gamma)),$$
are homeomorphisms. Moreover, the product on $G_A$ yields the partially defined maps
$$G_{\alpha.\beta}\times G_{\beta.\beta'}\to G_{\alpha\beta_{|\beta|}.\beta'},$$
and the inversion map satisfies $$G_{\alpha.\beta}^{-1}=G_{\widehat{\beta}.\alpha \beta_{|\beta|}},$$ where $\widehat{\beta}\in V_A$ is uniquely determined by $\beta=\hat{\beta}\beta_{|\beta|}$.
\end{lemma}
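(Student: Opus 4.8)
The plan is to read every assertion directly off the explicit description of $G_{\alpha.\beta}=\ell^{-1}(\alpha.\beta)$ from Definition~\ref{def: discr}, so that each claim reduces to a statement about admissible words together with one computation of the map $\kappa$. Fix $\gamma=\alpha.\beta\in I_A$. The defining equations $x=\alpha\,\sigma_A^{|\beta|-1}(y)$ and $n=|\alpha|-|\beta|+1$ determine an element $g=(x,n,y)\in G_\gamma$ from its source $y$, and dually $y=\widehat{\beta}\,\sigma_A^{|\alpha|}(x)$ determines it from its range $x$; in particular $s|_{G_\gamma}$ and $r|_{G_\gamma}$ are injective, so $G_\gamma$ is a bisection, and since $G_\gamma$ is clopen and $s,r$ are local homeomorphisms, $s|_{G_\gamma}$ and $r|_{G_\gamma}$ are open embeddings with the visibly continuous inverses $y\mapsto\alpha\,\sigma_A^{|\beta|-1}(y)$ and $x\mapsto\widehat{\beta}\,\sigma_A^{|\alpha|}(x)$. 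It remains to check that, for $y\in C(s(\gamma))$ resp. $x\in C(r(\gamma))$, these reconstructions produce a genuine element of $G_\gamma$ — which gives surjectivity and identifies the images with the stated cylinders. This amounts to: (i) the reconstructed triple lies in $G_A$ — here $\gamma\in I_A$ enters, since the condition $\alpha={\o}$ or $A_{\alpha_{|\alpha|},\beta_{|\beta|}}=1$ from \eqref{eq:indexset} is exactly what makes $\alpha\,\sigma_A^{|\beta|-1}(y)$ an admissible infinite word when $y\in C(\beta)$; and (ii) the triple satisfies $\kappa=|\beta|-1$.

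Point (ii) is the heart of the matter. For $\alpha={\o}$ one has $g=(\sigma_A^{|\beta|-1}(y),-(|\beta|-1),y)$ and the lower bound $k\geq|\beta|-1$ in the definition of $\kappa$ is attained, since $\sigma_A^{k}\sigma_A^{|\beta|-1}(y)=\sigma_A^{k+|\beta|-1}(y)$; thus $\kappa=|\beta|-1$, and similarly $\kappa=0$ when $|\beta|=1$. In the remaining case the crucial combinatorial fact is that a non-empty $G_{\alpha.\beta}$ with $\alpha\neq{\o}$ and $|\beta|\geq 2$ forces $\alpha_{|\alpha|}\neq\beta_{|\beta|-1}$: for $g\in G_\gamma$ one has $\sigma_A^{|\alpha|}(x)=\sigma_A^{|\beta|-1}(y)$, and evaluating $\kappa(x,n,y)=\min\{k:\sigma_A^{n+k}(x)=\sigma_A^{k}(y)\}$ at $k=|\beta|-2$ shows that the minimality $\kappa=|\beta|-1$ is equivalent to $\alpha_{|\alpha|}\neq\beta_{|\beta|-1}$. (Non-emptiness of $G_\gamma$ is already available, being recorded immediately before the statement on the strength of Lemma~\ref{lem:GMinteraction}.) With this in hand, for the triple reconstructed from $y\in C(\beta)$ and every $k$ with $\max\{0,-n\}\leq k<|\beta|-1$, one expands $\sigma_A^{n+k}(x)$ and $\sigma_A^{k}(y)$ as words — legitimate since $n+k<|\alpha|$ and $k<|\beta|$, so their leading segments are read off $\alpha$ and $\beta$ — and finds that an equality would again force $\alpha_{|\alpha|}=\beta_{|\beta|-1}$, which is excluded; hence $\kappa=|\beta|-1$ and the triple lies in $G_\gamma$. (Alternatively one peels letters off the front of $\alpha$ and applies Lemma~\ref{lem:GMinteraction} inductively, tracking the quantity $\kappa+c$.)

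For the multiplication rule, take composable $g_1\in G_{\alpha.\beta}$, $g_2\in G_{\beta.\beta'}$, so $g_1=(x_1,n_1,y)$ with $x_1=\alpha\,\sigma_A^{|\beta|-1}(y)$ and $g_2=(y,n_2,z)$ with $y=\beta\,\sigma_A^{|\beta'|-1}(z)$, $z\in C(\beta')$. Then $\sigma_A^{|\beta|-1}(y)=\beta_{|\beta|}\,\sigma_A^{|\beta'|-1}(z)$, so $x_1=(\alpha\beta_{|\beta|})\,\sigma_A^{|\beta'|-1}(z)$ and $n_1+n_2=|\alpha\beta_{|\beta|}|-|\beta'|+1$; since $\alpha\beta_{|\beta|}.\beta'\in I_A$ (its terminal letters are $A$-compatible because $\beta.\beta'\in I_A$), the computation of point (ii) gives $\kappa(g_1g_2)=|\beta'|-1$, whence $g_1g_2\in G_{\alpha\beta_{|\beta|}.\beta'}$. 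For inversion, let $g=(x,n,y)\in G_{\alpha.\beta}$, so $g^{-1}=(y,-n,x)$; the identity $\sigma_A^{|\beta|-1}(y)=\sigma_A^{|\alpha|}(x)$ gives $\kappa(g^{-1})\leq|\alpha|$, and the word expansion of point (ii), now comparing $\sigma_A^{-n+k}(y)$ with $\sigma_A^{k}(x)$ for $k<|\alpha|$ and again using $\alpha_{|\alpha|}\neq\beta_{|\beta|-1}$, forces $\kappa(g^{-1})=|\alpha|$. Reading off the prefixes, $\ell_r(g^{-1})$ has length $|\beta|-1$ and equals $\widehat{\beta}$, while $\ell_s(g^{-1})$ has length $|\alpha|+1$ and equals $\alpha\beta_{|\beta|}$; hence $g^{-1}\in G_{\widehat{\beta}.\alpha\beta_{|\beta|}}$, and applying this inclusion once more to the index $\widehat{\beta}.\alpha\beta_{|\beta|}$ and taking inverses gives the reverse containment, so $G_{\alpha.\beta}^{-1}=G_{\widehat{\beta}.\alpha\beta_{|\beta|}}$.

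The one genuine obstacle is the bookkeeping around $\kappa$: unlike the $\mathbb{Z}$-valued cocycle $c$, the function $\kappa$ is only piecewise additive, so the case split of Lemma~\ref{lem:GMinteraction} has to be followed with care, the organizing principle throughout being the combinatorial constraint that a non-empty index word $\alpha.\beta$ satisfies $\alpha_{|\alpha|}\neq\beta_{|\beta|-1}$. Once $\kappa$ is under control, the bisection property, the identification of source and range, and the product and inversion formulas are all routine word combinatorics plus the standard fact that an injective local homeomorphism is an open embedding.
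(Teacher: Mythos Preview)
The paper states this lemma without proof, so there is no argument to compare against. Your proof is correct and supplies exactly the details the paper omits. You rightly identify the one non-routine point: verifying $\kappa=|\beta|-1$ for the element reconstructed from an arbitrary $y\in C(\beta)$. Your organizing observation---that a non-empty $G_{\alpha.\beta}$ with $\alpha\neq{\o}$ and $|\beta|\geq 2$ forces $\alpha_{|\alpha|}\neq\beta_{|\beta|-1}$---is precisely the combinatorial fact needed to exclude smaller values of $\kappa$, and once it is in hand the product and inversion formulas follow by the word bookkeeping you describe.

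One remark worth recording: your analysis in fact shows that $G_{\alpha.\beta}=\emptyset$ whenever $\alpha\neq{\o}$, $|\beta|\geq 2$, and $\alpha_{|\alpha|}=\beta_{|\beta|-1}$ (for instance $\gamma=1.12$ in the full $2$-shift, where one computes $x=y$ and hence $\kappa=0$). So the paper's description of the image of $\ell$ immediately before \eqref{eq:indexset}, and the line ``$G_\gamma$ is always non-empty'', are not quite right as written; the index set $I_A$ should carry the extra constraint $\alpha_{|\alpha|}\neq\beta_{|\beta|-1}$ when both sides are defined. This is a minor slip in the paper's setup rather than a flaw in your argument---you prove the lemma for the $\gamma$ with $G_\gamma\neq\emptyset$, which are the only ones that occur in the decomposition \eqref{eq:G_decomp} and hence the only ones that matter downstream.
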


For the sequel it is important to understand how the shift map acts on \eqref{eq:G_decomp}.

\begin{lemma}\label{lem:shift_inv}
Let $i\in \{1,\ldots, N\}$ and let $\gamma=\alpha.\beta \in I_A$ be such that $i\gamma:=i\alpha.\beta\in I_A$. Then, the map 
$$\sigma_{i\gamma}:G_{i\gamma}\to G_{\gamma},\quad (x,n,y)\mapsto (\sigma_A(x),n-1,y),$$ 
is a homeomorphism and satisfies 
$$s_{\gamma}\circ \sigma_{i\gamma}= s_{i\gamma}.$$
Moreover, if $\gamma={\o}.\beta$ with $\beta=\hat{\beta}\beta_{|\beta|}$ and $\hat{\beta}\neq {\o}$, then for $\hat{\gamma}:={\o}.\hat{\beta}\in I_A$ the map 
\begin{align*}
\sigma_{\gamma}:G_{\gamma}&\to G_{\hat{\gamma}}, \\
& (\sigma_A^{|\beta|-1}(x),-|\beta|+1,x)\mapsto (\sigma_A^{|\beta|-2}(x),-|\beta|+2,x)=(\sigma_A^{|\hat{\beta}|-1}(x),-|\hat{\beta}|+1,x),
\end{align*}
is a homeomorphism onto its image.
\end{lemma}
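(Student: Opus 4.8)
The plan is to reduce the whole statement to Lemma \ref{prop:source maps}. That lemma says each $G_\gamma$ is a bisection and $s_\gamma\colon G_\gamma\to C(s(\gamma))$ is a homeomorphism; together with the explicit set-theoretic description of $G_\gamma$ this means an element of $G_\gamma$ is completely determined by its source, with range coordinate the prescribed function of the source and $c$-coordinate the constant $|r(\gamma)|-|s(\gamma)|+1$. So I would identify each displayed map as a composite of the maps $s_\bullet$, their inverses, and — in the second case — an inclusion of cylinder sets; the homeomorphism property is then automatic, and the identity $s_\gamma\circ\sigma_{i\gamma}=s_{i\gamma}$ is immediate since all the maps in sight preserve sources.

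For the first statement, since $s(\gamma)=s(i\gamma)=\beta$ the maps $s_\gamma$ and $s_{i\gamma}$ are homeomorphisms onto the \emph{same} cylinder $C(\beta)$, so $s_\gamma^{-1}\circ s_{i\gamma}\colon G_{i\gamma}\to G_\gamma$ is a homeomorphism, and it only remains to check that it coincides with $(x,n,y)\mapsto(\sigma_A(x),n-1,y)$. Given $(x,n,y)\in G_{i\gamma}$ one has $s_{i\gamma}(x,n,y)=y\in C(\beta)$; the description of $G_{i\gamma}$ gives $x=i\alpha\,\sigma_A^{|\beta|-1}(y)$ and $n=|i\alpha|-|\beta|+1$, while the description of $G_\gamma$ gives that the unique element of $G_\gamma$ with source $y$ is $(\alpha\,\sigma_A^{|\beta|-1}(y),\,|\alpha|-|\beta|+1,\,y)$. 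Since $\sigma_A(x)=\alpha\,\sigma_A^{|\beta|-1}(y)$ and $n-1=|\alpha|-|\beta|+1$, the two agree, which simultaneously gives the homeomorphism claim and the identity $s_\gamma\circ\sigma_{i\gamma}=s_{i\gamma}$.

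For the second statement, write $\gamma={\o}.\beta$ and $\hat\gamma={\o}.\hat\beta$ with $\beta=\hat\beta\beta_{|\beta|}$, so $|\hat\beta|=|\beta|-1$ and $\hat\gamma\in I_A$ (its range word being ${\o}$); since $\hat\beta$ is a proper prefix of $\beta$ the cylinder $C(s(\gamma))=C(\beta)$ is a clopen subset of $C(\hat\beta)=C(s(\hat\gamma))$. Then $s_{\hat\gamma}^{-1}|_{C(\beta)}\circ s_\gamma$ is a homeomorphism of $G_\gamma$ onto the clopen set $s_{\hat\gamma}^{-1}(C(\beta))\subseteq G_{\hat\gamma}$ — onto the \emph{image}, not onto all of $G_{\hat\gamma}$, precisely because $C(\beta)$ is typically a proper subset of $C(\hat\beta)$. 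An element of $G_\gamma$ is $(\sigma_A^{|\beta|-1}(x),-|\beta|+1,x)$ with $x\in C(\beta)\subseteq C(\hat\beta)$, and the unique element of $G_{\hat\gamma}$ with source $x$ is $(\sigma_A^{|\hat\beta|-1}(x),-|\hat\beta|+1,x)=(\sigma_A^{|\beta|-2}(x),-|\beta|+2,x)$, which is exactly the stated image; hence the displayed map equals $s_{\hat\gamma}^{-1}|_{C(\beta)}\circ s_\gamma$.

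I do not anticipate a genuine obstacle: the substance is bookkeeping with the words $\alpha,\beta,i\alpha,\hat\beta$ and keeping straight the asymmetry between $s(\gamma)=s(i\gamma)$ in the first statement and the strict inclusion $C(\beta)\subsetneq C(\hat\beta)$ in the second — the latter being the sole reason the second map is merely onto its image. The one place where care is needed is if one prefers to avoid invoking Lemma \ref{prop:source maps} and instead verify directly that the new triples lie in $G_A$ and carry the value of $\kappa$ demanded by the definitions of $G_\gamma$ and $G_{\hat\gamma}$; this is where Lemma \ref{lem:GMinteraction} enters. In the first case $\kappa+c\equiv|r(i\gamma)|=|\alpha|+1\geq1$ on $G_{i\gamma}$, so applying $\sigma_A$ once to the range coordinate leaves $\kappa$ unchanged; in the second case the range coordinate is already an iterated shift of the source, so $\kappa$ can be read off directly and drops by exactly one.
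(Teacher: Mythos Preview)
Your proof is correct. The paper itself states Lemma~\ref{lem:shift_inv} without proof, treating it as routine bookkeeping, so there is no approach to compare against; your reduction to Lemma~\ref{prop:source maps} by identifying each map as $s_\gamma^{-1}\circ s_{i\gamma}$ (respectively $s_{\hat\gamma}^{-1}|_{C(\beta)}\circ s_\gamma$) is exactly the kind of argument the authors must have had in mind, and your closing remarks on verifying the $\kappa$-value directly via Lemma~\ref{lem:GMinteraction} are accurate as well.
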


\subsection{Metric-measure space structure on $G_A$}
For every $\gamma \in I_A$, we define the pull-back metric $\mathrm{d}_{\gamma}$ on $G_{\gamma}$ by 
$$\mathrm{d}_{\gamma}(g_1,g_2):=\mathrm{d}(s_{\gamma}(g_1),s_{\gamma}(g_2)).$$ 
In other words, the metric $\mathrm{d}_\gamma$ on $G_\gamma$ can be defined from declaring $s_\gamma:G_\gamma\to C(s(\gamma))$ to be an isometry. Then, we define an extended metric $\mathrm{d}_{G_{A}}$ on $G_A$, generating the topology on $G_A$, by 
$$\mathrm{d}_{G_A}(g_1,g_2):=
\begin{cases}
\mathrm{d}_{\gamma}(g_1,g_2), & \text{if  for some $\gamma\in I_A$ both}\; g_1,g_2\in G_{\gamma}\\
\infty, & \text{if else}
\end{cases}.
$$

\begin{remark}
\label{blablaon}
The extended metric defined above differs from the one used in the preceding work \cite{GMON} of the second and third listed author. In \cite{GMON}, $G_A$ was decomposed in terms of $G_{\alpha,k}$ for $\alpha\in V_A$ and $k\geq 0$, that with our current notation corresponds to $\bigsqcup_{|\beta|=k+1}G_{\alpha.\beta},$ with $\beta$ satisfying $\alpha . \beta \in I_A$. Then, an extended metric was defined using this decomposition and declaring $s|_{G_{\alpha,k}}$ to be isometric. As we discuss below in Remark \ref{rem:Unbounded_com}, this extended metric led to various issues in obtaining spectral triples from log-Laplacians. 

\end{remark}

We define the pull-back finite Borel measure $\mu_{\gamma}:=s_\gamma^*\mu$ on $G_{\gamma}$. For an open $B\subset G_{\gamma}$, $\mu_\gamma$ is defined by 
$$\mu_{\gamma}(B)= \mu(s_{\gamma}(B)).$$ 
The collection $\{\mu_{\gamma}\}_{\gamma\in I_A}$ gives a Borel measure $\mu_{G_A}$ on $G_A$. Now since each $s_{\gamma}(G_{\gamma})=C(s(\gamma))$ is clopen in $\Sigma_A$, and for the latter $\mu$ is Ahlfors $\delta$-regular, we obtain the following.

\begin{lemma}
\label{ojnljnjonad}
For every $\gamma \in I_A$, the metric-measure space $(G_{\gamma},\mathrm{d}_{\gamma},\mu_{\gamma})$ is Ahlfors $\delta$-regular with $\delta= \log_{\lambda}(\lambda_{\max}).$
\end{lemma}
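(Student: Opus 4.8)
The plan is to transfer the Ahlfors regularity of $(\Sigma_A,\mathrm{d},\mu)$ through the source homeomorphism $s_\gamma\colon G_\gamma\to C(s(\gamma))$. First I would recall that by construction the metric $\mathrm{d}_\gamma$ makes $s_\gamma$ an isometry onto $C(s(\gamma))$ and the measure $\mu_\gamma=s_\gamma^*\mu$ makes it measure-preserving: for every Borel $B\subseteq G_\gamma$ and every $g\in G_\gamma$, $r>0$, one has $s_\gamma(B_{\mathrm{d}_\gamma}(g,r))=B_{\mathrm{d}}(s_\gamma(g),r)\cap C(s(\gamma))$ and $\mu_\gamma(B)=\mu(s_\gamma(B))$. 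Hence $\mu_\gamma(B_{\mathrm{d}_\gamma}(g,r))=\mu\big(B_{\mathrm d}(s_\gamma(g),r)\cap C(s(\gamma))\big)$.

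Next I would observe that the cylinder set $C(s(\gamma))$ is clopen in $\Sigma_A$, so it is open, and for $x=s_\gamma(g)\in C(s(\gamma))$ and all sufficiently small $r$ the ball $B_{\mathrm d}(x,r)$ is entirely contained in $C(s(\gamma))$ --- concretely, using the ultrametric identity $B(x,\lambda^{-n})=C(x_1\cdots x_n)$ from Subsection~\ref{subsec:topmark}, whenever $\lambda^{-n}\le \lambda^{-|s(\gamma)|}$, i.e. $n\ge |s(\gamma)|$, the ball $B(x,\lambda^{-n})\subseteq C(s(\gamma))$. For such radii the intersection with $C(s(\gamma))$ is redundant and $\mu_\gamma(B_{\mathrm{d}_\gamma}(g,r))=\mu(B_{\mathrm d}(x,r))$, to which the Ahlfors $\delta$-regularity estimate \eqref{eq:Ahlfors_reg} of $\mu$ applies directly with $\delta=\log_\lambda(\lambda_{\max})$. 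For the finitely many remaining radii $r\in(\lambda^{-|s(\gamma)|},1]$ one handles boundedness crudely: from below, $B_{\mathrm{d}_\gamma}(g,r)$ contains some small fixed ball of $\mathrm{d}_\gamma$-radius $\lambda^{-|s(\gamma)|}$ around $g$, whose $\mu_\gamma$-mass is already bounded below by $C^{-1}\lambda^{-\delta|s(\gamma)|}\gtrsim r^\delta$ (the implicit constant now depending on $\gamma$, but $I_A$-uniformity is not claimed in the statement); from above, $\mu_\gamma(B_{\mathrm{d}_\gamma}(g,r))\le \mu_\gamma(G_\gamma)=\mu(C(s(\gamma)))\le C\le C r^\delta$ since $r\le 1$. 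Combining the two regimes gives constants $C_\gamma\ge 1$ with $C_\gamma^{-1}r^\delta\le \mu_\gamma(B_{\mathrm{d}_\gamma}(g,r))\le C_\gamma r^\delta$ for all $g\in G_\gamma$ and $0\le r\le 1$, which is exactly Ahlfors $\delta$-regularity of $(G_\gamma,\mathrm{d}_\gamma,\mu_\gamma)$.

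I do not expect any serious obstacle here; the only mild subtlety is that $s_\gamma$ is an isometry onto the \emph{clopen subset} $C(s(\gamma))$ rather than onto all of $\Sigma_A$, so metric balls in $G_\gamma$ correspond to balls in $\Sigma_A$ \emph{intersected with} $C(s(\gamma))$, and one must check this intersection does not spoil the regularity bounds. The ultrametric structure makes this painless: small balls lie inside the cylinder automatically, and the finitely many large radii are absorbed into the constant. If one wanted the Ahlfors constants to be uniform over $\gamma\in I_A$ --- which is not asserted in Lemma~\ref{ojnljnjonad} but is used later --- one would additionally note that $\mu(C(s(\gamma)))=v_{s(\gamma)_1}u_{s(\gamma)_{|s(\gamma)|}}\lambda_{\max}^{-(|s(\gamma)|-1)}$ together with \eqref{eq:Ahlfors_reg} already yields $\mathrm{d}$-diameter-to-mass ratios controlled by the single Ahlfors constant $C$ of $(\Sigma_A,\mathrm{d},\mu)$, since $C(s(\gamma))$ is itself a ball of radius $\lambda^{-|s(\gamma)|+1}$.
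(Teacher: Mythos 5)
Your approach is exactly the paper's (the paper states this lemma without proof, as an immediate consequence of $s_\gamma$ being a measure-preserving isometry onto the clopen cylinder $C(s(\gamma))$ and the Ahlfors regularity \eqref{eq:Ahlfors_reg} of $\mu$ on $\Sigma_A$), and your case split between small balls contained in the cylinder and the finitely many large radii is the right way to spell it out. One inequality in your large-radius upper bound is written backwards: you claim $\mu(C(s(\gamma)))\le C\le Cr^\delta$ \enquote{since $r\le 1$}, but $r\le 1$ gives $Cr^\delta\le C$, not the reverse. The fix is immediate and uses the lower bound on $r$ in that regime rather than the upper bound: since $C(s(\gamma))=B(x,\lambda^{-|s(\gamma)|})$ is itself a ball, \eqref{eq:Ahlfors_reg} gives $\mu(C(s(\gamma)))\le C\lambda_{\max}^{-|s(\gamma)|}$, and for $r>\lambda^{-|s(\gamma)|}$ one has $r^\delta>\lambda_{\max}^{-|s(\gamma)|}$, whence $\mu_\gamma(B_{\mathrm{d}_\gamma}(g,r))\le Cr^\delta$ with the original constant $C$. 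This also confirms your closing remark that the Ahlfors constants can be taken uniform over $\gamma\in I_A$.
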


From Lemma \ref{lem:shift_inv} we obtain the next result describing shift invariance in the collection $\{\mu_{\gamma}\}_{\gamma \in I_A}$.

\begin{lemma}
\label{prop:shift_inv_measure}
For every $i\in \{1,\ldots, N\}$, $\gamma \in I_A$ such that $i\gamma \in I_A$ and open set $B\subset G_{i\gamma}$, one has 
$$\mu_{i\gamma}(B)=\mu_{\gamma}(\sigma_{i\gamma}(B)).$$
\end{lemma}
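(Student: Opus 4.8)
The plan is to reduce the statement to the corresponding shift-invariance property of the Parry measure $\mu$ on $\Sigma_A$, using the compatibility of source maps with the partial shift maps $\sigma_{i\gamma}$ established in Lemma \ref{lem:shift_inv}. First I would unravel the definitions: for an open set $B\subseteq G_{i\gamma}$ we have $\mu_{i\gamma}(B)=\mu(s_{i\gamma}(B))$ by definition of the pull-back measure $\mu_{i\gamma}=s_{i\gamma}^*\mu$, and likewise $\mu_\gamma(\sigma_{i\gamma}(B))=\mu(s_\gamma(\sigma_{i\gamma}(B)))$, provided $\sigma_{i\gamma}(B)$ is open in $G_\gamma$ so that the pull-back measure $\mu_\gamma$ is being evaluated on an admissible set. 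Since $\sigma_{i\gamma}:G_{i\gamma}\to G_\gamma$ is a homeomorphism by Lemma \ref{lem:shift_inv}, the image $\sigma_{i\gamma}(B)$ is indeed open, so both sides of the claimed identity are legitimate.

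The heart of the argument is then the identity $s_\gamma\circ\sigma_{i\gamma}=s_{i\gamma}$, which is exactly the last displayed equation of Lemma \ref{lem:shift_inv}. Applying it to the set $B$ gives
$$s_\gamma(\sigma_{i\gamma}(B))=(s_\gamma\circ\sigma_{i\gamma})(B)=s_{i\gamma}(B).$$
Taking $\mu$ of both sides yields $\mu(s_\gamma(\sigma_{i\gamma}(B)))=\mu(s_{i\gamma}(B))$, i.e. $\mu_\gamma(\sigma_{i\gamma}(B))=\mu_{i\gamma}(B)$, which is the assertion. Strictly speaking one should remark that $s_{i\gamma}(B)\subseteq C(s(i\gamma))=C(s(\gamma))$ and $s_\gamma(\sigma_{i\gamma}(B))\subseteq C(s(\gamma))$ are Borel subsets of $\Sigma_A$ on which $\mu$ is defined; since $s_{i\gamma}$ and $s_\gamma$ are homeomorphisms onto clopen subsets of $\Sigma_A$ (Lemma \ref{prop:source maps}), images of open sets are open, hence Borel, and no measurability subtlety arises.

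I do not expect a genuine obstacle here: the statement is a formal consequence of the already-established homeomorphism properties and the equivariance relation $s_\gamma\circ\sigma_{i\gamma}=s_{i\gamma}$, together with the definition of the pull-back measures. The only point requiring a modicum of care is bookkeeping — making sure the hypotheses $\gamma\in I_A$ and $i\gamma\in I_A$ are in force so that $G_{i\gamma}$, $G_\gamma$, and the map $\sigma_{i\gamma}$ are all defined, and that $s(i\gamma)=s(\gamma)$ so the two sides live over the same cylinder set $C(s(\gamma))$ in $\Sigma_A$. Once these are noted, the proof is a two-line computation.
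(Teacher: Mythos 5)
Your proof is correct and follows exactly the route the paper intends: the lemma is stated as an immediate consequence of Lemma \ref{lem:shift_inv}, and your two-line computation $\mu_{i\gamma}(B)=\mu(s_{i\gamma}(B))=\mu\bigl(s_\gamma(\sigma_{i\gamma}(B))\bigr)=\mu_\gamma(\sigma_{i\gamma}(B))$, using the equivariance $s_\gamma\circ\sigma_{i\gamma}=s_{i\gamma}$ and the definition of the pull-back measures, is precisely the argument. The additional remarks on openness of $\sigma_{i\gamma}(B)$ and on $s(i\gamma)=s(\gamma)$ are sound bookkeeping and nothing is missing.
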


\section{$O_{A}$-representation and wavelet basis}

The Cuntz--Krieger algebra $O_{A}$, as originally defined in \cite{CK}, is the universal $C^{*}$-algebra generated by $N$ elements $S_{i}$, $i\in\{1,\cdots, N\}$, subject to the relations
\begin{equation}\label{eq: CK-relations} 
\sum_{i=1}^{N}S_{i}S_{i}^{*}=1 \quad \mbox{and}\quad  S_{i}^{*}S_{k}=\delta_{ik}\sum_{j=1}^{N}A_{ij}S_{j}S_{j}^{*}.
\end{equation}
The universal $C^{*}$-algebra $O_{A}$ carries an action of the group $\mathbb{T}$ by $*$-automorphisms, known as the \emph{gauge action} and determined by the formula 
\begin{equation}
\label{eq:gauge-action}
\alpha_{z}(S_{i}):=zS_{i},\quad z\in\mathbb{T},\,\,i\in\{1,\cdots, N\}.
\end{equation} 
The Cuntz--Krieger algebra $O_A$ can be equivalently described as the reduced groupoid $C^*$-algebra $C_r^*(G_A)$ of the Deaconu--Renault groupoid introduced in the Subsection \ref{subsec:deac}. The isomorphism intertwines the gauge action \eqref{eq:gauge-action} and the action \eqref{eq:cocycle-action}. 
The following theorem can be found in the literature, see \cite{Ren1, Ren2}.

\begin{thm}
\label{renthm}
Let $A$ be a primitive matrix of $0$:s and $1$:s. The Cuntz--Krieger algebra  $O_A$ is isomorphic to the groupoid $C^{*}$-algebra $C_r^*(G_A)$ and under this isomorphism each generator $S_i$ is given by the characteristic function of the set
$$\{(x,1,\sigma_A(x)):x\in C(i)\}.$$
Moreover, the isomorphism is equivariant for the respective $\mathbb{T}$-actions.
\end{thm}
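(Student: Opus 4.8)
The plan is to recall the two standard constructions of the Cuntz--Krieger algebra — the universal one via generators and relations \eqref{eq: CK-relations}, and the groupoid one via $C^*_r(G_A)$ — and to exhibit a concrete isomorphism between them by specifying where the generators go. First I would define, for each $i\in\{1,\dots,N\}$, the element $s_i\in C_c(G_A)\subseteq C^*_r(G_A)$ to be the characteristic function of the bisection $U_i:=\{(x,1,\sigma_A(x)):x\in C(i)\}$, noting that $U_i$ is open and compact (a bisection of the form $C(i\,\cdot\, \emptyset)$ in the notation of Lemma \ref{prop:source maps}, with $r(U_i)=C(i)$ and $s(U_i)=\Sigma_A$), so that $\mathbf 1_{U_i}$ is genuinely a continuous compactly supported function on $G_A$.

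The key computational step is to verify directly from the convolution product \eqref{eq:conv} and the star operation $f^*(g)=\overline{f(g^{-1})}$ that the elements $s_i$ satisfy the Cuntz--Krieger relations \eqref{eq: CK-relations}. Concretely, $s_i^*=\mathbf 1_{U_i^{-1}}$ is supported on $\{(\sigma_A(x),-1,x):x\in C(i)\}$; since each $U_i$ is a bisection one computes $s_i^* s_i=\mathbf 1_{s(U_i)}=\mathbf 1_{\Sigma_A}$-localized appropriately, and more generally $s_i^* s_k=\mathbf 1_{s(U_i)\cap s(U_k)}$ after matching ranges, which after using $A_{ij}$ to describe when $C(i)$ maps onto $C(j)$ under the shift gives exactly $\delta_{ik}\sum_j A_{ij}\mathbf 1_{C(j)}$; and $\mathbf 1_{C(j)}=s_j s_j^*$ since $s_js_j^*=\mathbf 1_{r(U_j)}=\mathbf 1_{C(j)}$. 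Likewise $\sum_i s_is_i^*=\sum_i\mathbf 1_{C(i)}=\mathbf 1_{\Sigma_A}$, the unit of $C^*_r(G_A)$, because $\Sigma_A=\bigsqcup_i C(i)$. By the universal property of $O_A$ this produces a $*$-homomorphism $\pi:O_A\to C^*_r(G_A)$ sending $S_i\mapsto s_i$.

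It remains to show $\pi$ is an isomorphism. For surjectivity I would argue that the $s_i$ and their adjoints generate $C^*_r(G_A)$: finite products $s_\alpha s_\beta^*$ (over admissible words $\alpha,\beta$) are the characteristic functions of the generating bisections $C(\alpha,\beta)$ from Subsection \ref{subsec:deac}, whose span is dense in $C_c(G_A)$ and hence in $C^*_r(G_A)$. For injectivity, the cleanest route is to invoke the gauge-invariant uniqueness theorem: $O_A$ is simple (as $A$ is primitive, hence the associated graph is cofinal and every cycle has an exit for $N\ge 2$; the degenerate low-dimensional cases are handled separately), so any nonzero $*$-homomorphism out of $O_A$ is injective. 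Finally, equivariance is immediate from comparing \eqref{eq:gauge-action} and \eqref{eq:cocycle-action}: $\alpha_z(s_i)=\alpha_z(\mathbf 1_{U_i})$ equals $z^{c}\mathbf 1_{U_i}$ where the cocycle $c$ equals $1$ on $U_i$, so $\pi\circ\alpha_z^{O_A}=\alpha_z^{C^*_r}\circ\pi$.

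The main obstacle is bookkeeping rather than conceptual: one must be careful that the convolution-product computations genuinely reproduce the right-hand side of \eqref{eq: CK-relations}, in particular tracking how $A_{ij}$ enters when composing the partially defined multiplication on bisections, and one should state precisely which simplicity/uniqueness theorem is being used and why primitivity of $A$ guarantees its hypotheses. Since this theorem is attributed to the literature (\cite{Ren1,Ren2}), in the paper itself it would be acceptable to give only the above sketch and cite the precise statements there; I would include enough detail that the identification of $S_i$ with $\mathbf 1_{U_i}$ — which is the part used later — is unambiguous.
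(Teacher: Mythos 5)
The paper does not prove this theorem at all --- it is explicitly delegated to the literature (\cite{Ren1,Ren2}) --- and your sketch is the standard argument those references carry out, correctly organized: check the Cuntz--Krieger relations for the indicator functions of the bisections $U_i=C(i,\o)$, get surjectivity from $s_\alpha s_\beta^*=\mathbf 1_{C(\alpha,\beta)}$, and get injectivity from a uniqueness theorem, with equivariance read off from $c\equiv 1$ on $U_i$. One intermediate formula is garbled: $s_i^*s_i=\mathbf 1_{s(U_i)}=\mathbf 1_{\sigma_A(C(i))}=\sum_j A_{ij}\mathbf 1_{C(j)}$, which is \emph{not} $\mathbf 1_{\Sigma_A}$ unless row $i$ of $A$ is all ones (your final displayed relation is nevertheless the right one), and since you establish $\mathbb T$-equivariance anyway, the gauge-invariant uniqueness theorem (with $s_is_i^*=\mathbf 1_{C(i)}\neq 0$) gives injectivity more cleanly than simplicity, avoiding the separate discussion of condition (I) and degenerate cases.
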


The $C^*$-algebra $C^*_{r}(G_A)$ can be constructed explicitly as the norm closure of the $*$-algebra $C_{c}(G_{A})$ in its representation on $L^2(G_A,\mu_{G_A})$ by convolution operators via Equation \eqref{eq:conv}. For our purposes, it is important to note that the $*$-subalgebra $\LC\subset C_{c}(G_{A})$ of locally constant, compactly supported functions is dense in $C^{*}_{r}(G_{A})$. 

In the sequel we will often identify the standard Cuntz--Krieger generators $S_i$ for $i\in \{1,\ldots, N\}$ of $O_A$ with the characteristic functions from Theorem \ref{renthm}. In this section we shall study how $L^2(G_A,\mu_{G_A})$ decomposes under the decomposition \eqref{eq:G_decomp} and how $O_A$ acts on this decomposition.

\begin{remark}
\label{cyclicremark}
Let us make a remark about inclusions and representations to avoid confusion. We first note that since $A$ is primitive, there is a unique KMS-state $\phi$ on $O_A$ defined on the dense subspace $C_c(G_A)$ by 
$$\phi(f)=\int_{\Sigma_A} f(x,0,x)\mathrm{d}\mu,$$
where $\mu$ denotes the Parry measure. In particular, the identity map on $C_c(G_A)$ extends to an isometric identification of the $O_A$-representations
$$L^2(G_A,\mu_A)=L^2(O_A,\phi),$$
where $L^2(O_A,\phi)$ denotes the GNS-representation associated to $\phi$. We can therefore view $O_A$ and its dense subalgebras $\LC$, $C_c(G_A)$ as dense subspaces of $L^2(G_A,\mu_A)$. The precise inclusion is implemented by acting on the cyclic vector $1\in O_A\subseteq L^2(O_A,\phi)$. 
\end{remark}

\begin{example}
\label{onthird}
For the full $N$-shift (see Example \ref{onfirst} and \ref{onsecond}), we have that $O_N\simeq C_r^*(G_N)$ is the Cuntz algebra. It is the universal unital $C^*$-algebra generated by $N$ isometries with orthogonal ranges.
\end{example}

\begin{example}
\label{freethird}
For the group action of $\mathbb{F}_d$ on its Gromov boundary $\partial \mathbb{F}_d$ (see Example \ref{freefirst} and \ref{freesecond}), we have a groupoid isomorphism $G_A\simeq \partial \mathbb{F}_d\rtimes \mathbb{F}_d$. This is proved in \cite[Section 2]{spielberg}, see also \cite[Subsection 3.4.3]{GMCK}. In particular, for the free group $O_A\simeq C(\partial \mathbb{F}_d)\rtimes \mathbb{F}_d$. The same argument shows that $L^2(G_A,\mu_{G_A})\simeq L^2(\partial \mathbb{F}_d,\mu)\otimes \ell^2(\mathbb{F}_d)$ compatibly with the natural covariant representation of $C(\partial \mathbb{F}_d)\rtimes \mathbb{F}_d$.
\end{example}

\subsection{Cuntz--Krieger algebra} 

For every $f\in L^2(G_A, \mu_{G_A})$ it holds that 
$$(S_i  f)(x,n,y) =
\begin{cases}
0, & \text{if } x \not\in C(i)\\
f(\sigma_A(x),n-1,y), & \text{if else}
\end{cases}.
$$

To determine the support of functions $S_i f$ we shall use the following result.

\begin{lemma}\label{lem:inverse_images}
Let $\gamma=\alpha.\beta\in I_A$ and $(x,n,y)\in G_A$ with $(\sigma_A(x),n-1,y)\in G_{\gamma}$. Then,
\begin{enumerate}
\item if $\alpha \neq {\o}$ or $|\beta|=1$, we have that $(x,n,y)\in \bigsqcup_{j:j\gamma \in I_A} G_{j\gamma}$;
\item if $\alpha = {\o}$ and $|\beta|> 1$, we have that 
$$(x,n,y)\in \left(\bigsqcup_{j: j\gamma\in I_A} G_{j\gamma}\right)\bigsqcup G_{{\o}.\hat{\beta}},$$ 
where $\hat{\beta}:=\beta_1\ldots \beta_{|\beta|-1}$. 
\end{enumerate}
\end{lemma}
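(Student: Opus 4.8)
The plan is to unpack the hypothesis $(\sigma_A(x),n-1,y)\in G_\gamma$ using the explicit description of the fibre $G_\gamma=\ell^{-1}(\gamma)$ given right before Lemma \ref{prop:source maps}, and then to read off which fibre $G_{\alpha'.\beta'}$ the point $(x,n,y)$ lands in by applying Lemma \ref{lem:GMinteraction} to relate $\kappa(x,n,y)$ to $\kappa(\sigma_A(x),n-1,y)$. Concretely, writing $g=(x,n,y)$ and $g'=(\sigma_A(x),n-1,y)$, the hypothesis means $\ell_r(g')=\alpha$, $\ell_s(g')=\beta$, so $n-1=|\alpha|-|\beta|+1$, $\kappa(g')=|\beta|-1$, $\sigma_A(x)=\alpha\sigma_A^{|\beta|-1}(y)$ and $y=\beta\sigma_A^{|\beta|}(y)$. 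The first coordinate identity says $\sigma_A(x)$ has initial word $\alpha$; hence $x$ has initial word $x_1\alpha$ for its first letter $x_1$, i.e. $x=x_1\alpha\sigma_A^{|\beta|-1}(y)$. The key case split in Lemma \ref{lem:GMinteraction} is exactly whether $\kappa(g)+n\geq 1$ or $\kappa(g)+n=0$, and one checks that $\kappa(g')+(n-1) = (|\beta|-1)+(|\alpha|-|\beta|+1) = |\alpha|$; so $\kappa(g')+(n-1)\geq 1$ precisely when $\alpha\neq{\o}$, and equals $0$ precisely when $\alpha={\o}$. This is why the two cases of the lemma correspond to $\alpha\neq{\o}$ versus $\alpha={\o}$.

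First I would treat case (1), $\alpha\neq{\o}$ (and also fold in $|\beta|=1$, which will need separate care). By Lemma \ref{lem:GMinteraction}, $\kappa(g)=\kappa(\sigma_A(x),n-1,y)=\kappa(g')=|\beta|-1$. Then $c(g)=n=|\alpha|-|\beta|+2=|x_1\alpha|-|\beta|+1$, and from $x=x_1\alpha\sigma_A^{|\beta|-1}(y)$, $y=\beta\sigma_A^{|\beta|}(y)$, $\kappa(g)=|\beta|-1$ we read off directly from the definition of $\ell$ that $\ell_r(g)=x_1\alpha$ and $\ell_s(g)=\beta$, so $g\in G_{x_1\alpha.\beta}=G_{j\gamma}$ with $j=x_1$. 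The admissibility $j\gamma\in I_A$ follows because $g\in G_A$ forces $A_{j,\alpha_1}=1$ (when $\alpha\neq{\o}$), so $j\alpha\in V_A$, and the endpoint compatibility $A_{\alpha_{|\alpha|},\beta_{|\beta|}}=1$ is inherited from $\gamma\in I_A$. When $|\beta|=1$ and $\alpha={\o}$ one has $\kappa(g')+(n-1)=0$, so Lemma \ref{lem:GMinteraction} gives $\kappa(g)=\kappa(g')-1=-1$, which is impossible, meaning this sub-subcase is vacuous unless $x_1$ is such that things still work — I would double-check, but in fact when $|\beta|=1$ we must have $\kappa(g')=0$ and the ``$\alpha={\o},|\beta|=1$'' situation is handled by observing $\kappa(g)\geq\max\{0,-n\}$ cannot be $-1$, forcing us back into the $\alpha\neq{\o}$ bookkeeping; the cleanest route is to just say: if $\alpha\neq{\o}$ or $|\beta|=1$ then $\kappa(g')+(n-1)=|\alpha|$ together with $\kappa(g)\geq 0$ rules out the second branch of Lemma \ref{lem:GMinteraction} except when $|\alpha|=0$, and then $|\beta|=1$ forces $\kappa(g')=0=\kappa(g)+1$ is impossible, so again only the first branch survives, giving $g\in G_{j\gamma}$ as above.

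Next, case (2): $\alpha={\o}$ and $|\beta|>1$. Here $\kappa(g')+(n-1)=0$, so Lemma \ref{lem:GMinteraction} says $\kappa(g)=\kappa(g')-1=|\beta|-2$. Now $n=c(g)=0-|\beta|+2=-|\beta|+2$, and $\sigma_A(x)=\sigma_A^{|\beta|-1}(y)$ (from $\alpha={\o}$), so $x=x_1\sigma_A^{|\beta|-1}(y)$. There are two sub-cases according to whether $x_1=\beta_{|\beta|-1}$ or not, equivalently whether $\ell_r(g)={\o}$ or not. If $x=\sigma_A^{|\beta|-2}(y)$, i.e. $x_1=y_{|\beta|-1}=\beta_{|\beta|-1}$, then $\ell_r(g)={\o}$, $\ell_s(g)=\hat\beta:=\beta_1\ldots\beta_{|\beta|-1}$ (since $\kappa(g)=|\hat\beta|-1$ and $y=\hat\beta\sigma_A^{|\beta|-1}(y)$), so $g\in G_{{\o}.\hat\beta}$. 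Otherwise $\ell_r(g)=x_1$ is a single letter with $A_{x_1,\beta_1}$-compatibility... more precisely $g\in G_{j.\beta}$ where $j=x_1$ satisfies $j\gamma=j.\beta\in I_A$, which requires $A_{j,\beta_{|\beta|}}=1$, and this holds because $g\in G_A$. Assembling, $g\in\bigl(\bigsqcup_{j:j\gamma\in I_A}G_{j\gamma}\bigr)\sqcup G_{{\o}.\hat\beta}$, and disjointness is automatic from \eqref{eq:G_decomp}.

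The main obstacle I anticipate is the bookkeeping at the boundary between the cases — in particular making the ``$\alpha\neq{\o}$ or $|\beta|=1$'' hypothesis in (1) and the interplay with $\kappa\geq\max\{0,-n\}$ completely airtight, since Lemma \ref{lem:GMinteraction}'s branch is governed by the sign of $\kappa(g)+n$ at the \emph{point} $g$ while the hypothesis is phrased at $g'$; one has to verify $\kappa(g)+n$ and $\kappa(g')+(n-1)$ track each other correctly and that no spurious negative values of $\kappa$ arise. Once that sign analysis is pinned down, the rest is a direct substitution into Definition \ref{def: discr}. I would also remark that in case (1) not every $j$ in the index set need occur — only $j=x_1$ does for a given $g$ — but the statement only claims the containment in the disjoint union, which is all we need.
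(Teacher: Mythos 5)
Your proof is correct and is essentially the intended argument: the paper states this lemma without proof, but it introduces Lemma \ref{lem:GMinteraction} immediately beforehand precisely so that one can track $\kappa(x,n,y)$ versus $\kappa(\sigma_A(x),n-1,y)$ and read off $\ell(x,n,y)$ from Definition \ref{def: discr}, exactly as you do. Your sign analysis (second branch forces $|\alpha|=0$ and $|\beta|\geq 2$, hence is excluded in case (1); in case (2) the branch is decided by whether $x_1=\beta_{|\beta|-1}$) is the whole content, and the remaining admissibility checks $A_{x_1,\cdot}=1$ follow from $x\in\Sigma_A$ as you note.
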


We now present each generator $S_i$ as a matrix operator acting on the decomposition 
\begin{equation*}\label{eq:L2_decomp}
L^2(G_A, \mu_{G_A})=\bigoplus_{\gamma\in I_A}L^2(G_{\gamma},\mu_{\gamma}).
\end{equation*}
Consider the projections $P_{\gamma}:L^2(G_A, \mu_{G_A})\to L^2(G_A, \mu_{G_A})$ onto $L^2(G_{\gamma},\mu_{\gamma})$. We write 
$$P_\gamma S_iP_{\gamma'}:L^2(G_{\gamma'},\mu_{\gamma'})\to L^2(G_{\gamma},\mu_{\gamma}),$$
for the $\left(\gamma,\gamma'\right)$-entry of $S_i$. In fact, for $f\in L^2(G_A,\mu_{G_A})$ and $(x,n,y)\in G_A$ we have that 
$$(P_\gamma S_iP_{\gamma'}f)(x,n,y)=
\begin{cases}
f(\sigma_A(x),n-1,y), & \text{if } x\in C(i),\,\, (x,n,y)\in G_{\gamma},\,\, (\sigma_A(x),n-1,y)\in G_{\gamma'}\\
0, & \text{otherwise}
\end{cases}.$$

Given that all metric-measure information on $G_A$ is defined in terms of the isometric homeomorphism
$$\hat{s}:=\bigsqcup_{\gamma\in I_A} s_\gamma:G_A=\bigsqcup_{\gamma\in I_A} G_{\gamma}\to \bigsqcup_{\gamma\in I_A} C(s(\gamma))\subseteq \bigsqcup_{\gamma\in I_A} \Sigma_A,$$
we have a unitary isomorphism that by abuse of notation we denote
\begin{equation}\label{eq:univ_isom}
\hat{s}:L^2(G_A, \mu_{G_A})=\bigoplus_{\gamma\in I_A}L^2(G_{\gamma},\mu_{\gamma})\xrightarrow{} \bigoplus_{\gamma\in I_A}L^2(C(s(\gamma)),\mu).
\end{equation}
To get a better understanding of $P_\gamma S_iP_{\gamma'}$, we identify it under this unitary with the operator
$$S_{i,\gamma}^{\gamma'}:L^2(C(s(\gamma')),\mu)\to L^2(C(s(\gamma)),\mu),$$
and $S_i$ with the matrix operator $(S_{i,\gamma}^{\gamma'})_{\gamma,\gamma'\in I_A}$.

\begin{prop}
\label{descmatrdecomp}
Let $i\in \{1,\ldots, N\}$. Then, every column of the matrix operator $S_i=(S_{i,\gamma}^{\gamma'})_{\gamma,\gamma'\in I_A}$ has at most two non-zero entries and every row at most $N$ non-zero entries. The entry $S_{i,\gamma}^{\gamma'}$ of $S_i$ as an operator $L^2(C(s(\gamma')),\mu)\to L^2(C(s(\gamma)),\mu)$ can be described by
$$S_{i,\gamma}^{\gamma'} f(s(\gamma)x):=
\theta_i(\gamma,\gamma')f(s(\gamma')x), $$
where $\theta_i:I_A\times I_A\to \{0,1\}$ is the characteristic function of the set
$$\left\{(\gamma,\gamma')\in  I_A\times I_A: \quad
\begin{matrix} s(\gamma)=s(\gamma')\,\, \text{and}\,\, \,r(\gamma)=ir(\gamma') \\
\mbox{or}\\
r(\gamma)=r(\gamma')={\o}\,\, \text{and}\,\, s(\gamma)=\widehat{s(\gamma')}\,\, \text{and}\,\,  s(\gamma')_{|s(\gamma')|-1}=i
\end{matrix}\right\}.$$

In particular, $S_{i,\gamma}^{\gamma'}=0$ when
\begin{enumerate}
\item $(r(\gamma')\neq \o$ or $|s(\gamma')|=1)$ and $(\gamma\neq i\gamma'$ or $i\gamma' \notin I_A)$, 
\item $r(\gamma') = {\o}$ and $|s(\gamma')|> 1$: 
\begin{enumerate}
\item $A_{i,s(\gamma')_{|s(\gamma')|}}=1$ and $(\gamma\neq i.\gamma'$ or $\gamma\neq  {\o}.\widehat{s(\gamma')})$,
\item $A_{i,s(\gamma')_{|s(\gamma')|}}=0$,
\item $\gamma=\widehat{\gamma'}$ and $s(\gamma')_{|s(\gamma')|-1}\neq i$.
\end{enumerate}
\end{enumerate}
\end{prop}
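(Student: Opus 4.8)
The plan is to unwind the definitions of $S_i$ and of the decomposition \eqref{eq:G_decomp} and track, for a fixed bisection $G_{\gamma'}$, where the image $S_i L^2(G_{\gamma'},\mu_{\gamma'})$ lands. The starting point is the explicit formula $(S_i f)(x,n,y)=f(\sigma_A(x),n-1,y)$ for $x\in C(i)$ and $0$ otherwise, together with Lemma \ref{lem:inverse_images}, which already tells us that a point $(x,n,y)$ with $(\sigma_A(x),n-1,y)\in G_{\gamma'}$ must lie either in some $G_{j\gamma'}$ (with $j\gamma'\in I_A$) or, in the special case $r(\gamma')={\o}$ and $|s(\gamma')|>1$, in $G_{{\o}.\widehat{s(\gamma')}}$. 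First I would use the constraint $x\in C(i)$ to cut down these possibilities: in case (1) of Lemma \ref{lem:inverse_images} the only surviving source word is $j=i$, giving the single nonzero entry $S_{i,i\gamma'}^{\gamma'}$; in case (2) the term $G_{j\gamma'}$ again forces $j=i$ (hence needs $A_{i,s(\gamma')_{|s(\gamma')|}}=1$, since $i\gamma'\in I_A$ is exactly this condition when $r(\gamma')={\o}$), while the extra term $G_{{\o}.\widehat{s(\gamma')}}$ survives only when its defining data is compatible with $x\in C(i)$, i.e.\ $s(\gamma')_{|s(\gamma')|-1}=i$. This yields at most two nonzero entries per column and identifies them, which is precisely the content of the displayed characteristic function $\theta_i$ once one also records the target bisection using Lemma \ref{prop:source maps} (multiplication $G_{i.(j)}\times G_{(j).\gamma'}\to G_{i\gamma'}$ or the inversion/shift identities of Lemma \ref{lem:shift_inv}).

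Next I would pin down the formula $S_{i,\gamma}^{\gamma'}f(s(\gamma)x)=\theta_i(\gamma,\gamma')f(s(\gamma')x)$. Under the unitary \eqref{eq:univ_isom} an element of $L^2(G_{\gamma},\mu_\gamma)$ is transported to $L^2(C(s(\gamma)),\mu)$ via $s_\gamma$, so I must compute $s_\gamma$ and $s_{\gamma'}$ on the relevant points and compare. Concretely, if $g=(x,n,y)\in G_{\gamma}$ with $\gamma=i\gamma'$ (case $r(\gamma')\neq{\o}$ or $|s(\gamma')|=1$), then $s_\gamma(g)=y=s_{\gamma'}(\sigma_A(x),n-1,y)=s_{\gamma'}(g')$ where $g'$ is the preimage under the shift; since $s_\gamma\circ\sigma_{i\gamma'}=s_{i\gamma'}$ by Lemma \ref{lem:shift_inv}, the source coordinates of $g$ and $g'$ coincide, so in the $C(s(\gamma))=C(s(\gamma'))$ picture the operator is simply the identity, i.e.\ $f(s(\gamma)x)\mapsto f(s(\gamma')x)$ with $s(\gamma)=s(\gamma')$. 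In the remaining case $\gamma={\o}.\widehat{s(\gamma')}$ one uses the second part of Lemma \ref{lem:shift_inv}: the shift sends $G_{\gamma'}$ homeomorphically onto (its image in) $G_{\widehat{\gamma'}}={\o}.\widehat{s(\gamma')}$ and again intertwines the source maps after the obvious reindexing, so on the $\Sigma_A$-side the entry is once more a reindexing of the argument, matching the formula with $s(\gamma)=\widehat{s(\gamma')}$. Summing, all matrix entries are either $0$ or this "shift of argument" operator, and $\theta_i$ records exactly when.

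Finally I would verify the row count and the enumeration of vanishing cases. For a fixed target $\gamma$, an entry $S_{i,\gamma}^{\gamma'}$ is nonzero iff $\gamma=i\gamma'$ (so $\gamma'=\widehat{r(\gamma)}.s(\gamma)$ with $r(\gamma)_1=i$, at most one such $\gamma'$) or $r(\gamma)={\o}$ and $\gamma'={\o}.s(\gamma)j$ for some letter $j$ with $A_{s(\gamma)_{|s(\gamma)|},j}=1$ and, reading off the condition, $s(\gamma')_{|s(\gamma')|-1}=s(\gamma)_{|s(\gamma)|}=i$ — there are at most $N$ admissible extensions $j$, giving the bound $N$ on the number of nonzero entries in a row. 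The list of vanishing cases (1), (2a), (2b), (2c) is then just the logical negation of the support condition, split according to whether $r(\gamma')={\o}$ and whether $A_{i,s(\gamma')_{|s(\gamma')|}}=1$; I would simply check that each listed case indeed makes $\theta_i$ vanish. The main obstacle is purely bookkeeping: one must be scrupulous about which of $\alpha$, $\beta$ the shift acts on, about the endpoint-compatibility condition defining $I_A$ (which changes form when $r(\gamma')={\o}$), and about keeping the $s_\gamma$/$r_\gamma$ identifications straight through \eqref{eq:univ_isom}; there is no deep difficulty, only the risk of an off-by-one error in the word lengths, which Lemma \ref{lem:GMinteraction} and Lemma \ref{lem:shift_inv} are designed to control.
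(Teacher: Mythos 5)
Your proposal is correct and follows exactly the route the paper intends: the proposition is a direct verification from the displayed formula for $P_{\gamma}S_iP_{\gamma'}$, from Lemma \ref{lem:inverse_images} cut down by the constraint $x\in C(i)$, and from the source-map identifications of Lemmas \ref{prop:source maps} and \ref{lem:shift_inv}; the paper supplies no separate argument beyond these ingredients. The only point worth making explicit in your row count is that the two sources of nonzero entries are mutually exclusive for a fixed $\gamma$ (the case $\gamma=i\gamma'$ forces $r(\gamma)\neq{\o}$ while the second case forces $r(\gamma)={\o}$), which is what yields the bound $N$ rather than $N+1$.
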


\subsection{The wavelet basis for $L^2(G_A,\mu_{G_A})$}
\label{lknlnaldjnaljn}

Let us turn to constructing a wavelet basis for $L^2(G_A, \mu_{G_A})$ by utilizing the unitary isomorphism \eqref{eq:univ_isom}. The construction is similar, yet different as we see in Remark \ref{rem:diff_wavelets}, to those in \cite{marcollipaol}. Specifically, for a finite word $\beta\in V_A\setminus \{{\o}\}$ we aim to construct an orthonormal basis $H(\beta)$ for $L^2(C(\beta),\mu)$. 

To this end, we decompose $L^2(C(\beta),\mu)$ into finite-dimensional spaces as
$$L^2(C(\beta),\mu)=\bigoplus_{k=0}^\infty L^2_k(C(\beta),\mu),$$
by declaring $\bigoplus_{k=0}^N L^2_k(C(\beta),\mu)$ to consist of the linear span of $\{\chi_{C(\beta\nu)}: |\nu|=N\}$ in $L^2(C(\beta),\mu)$. The space $L^2_0(C(\beta),\mu)$ is one-dimensional and is spanned by the unit vector 
$$h_{\beta}:=\mu(C(\beta))^{-1/2} \chi_{C(\beta)}.$$
We write $H_0(\beta)=\{h_{\beta}\}$ and for $k>0$, $H_k(\beta)$ will denote an orthonormal basis of $L^2_k(C(\beta),\mu)$ with the property that for $h\in H_k(\beta)$:
\begin{enumerate}
\item There exists a unique $\omega_\beta(h)\in V_A$ starting with $\beta$ of length $|\beta|+k-1\geq 1$ such that $h$ is supported in $C(\omega_\beta(h))$, that we sometimes denote by $C(h)$.
\item $h$ is constant on $C(\mu)$ for any $|\mu|\geq |\beta|+k$.
\end{enumerate}
The orthonormal basis $H(\beta)$ is defined as $\bigsqcup_{k\geq 0} H_k(\beta)$. Note that for $h\in H(\beta)\setminus H_0(\beta)$,
$$\int_{C(\beta)} h\d\mu=\int_{C(\omega_\beta(h))} h\d\mu=0,$$
since the left hand side is $\mu(C(\beta))^{1/2}\langle h_\beta,h\rangle$. We shall enumerate $H(\beta)$ as $$(h_\beta,h_{\nu,j})_{\nu=\beta\nu_0\in V_A, j\in J_\beta(\nu)}$$ where $\omega_\beta(h_{\nu,j})=\nu$ and $J_\beta(\nu)\subseteq \{1,\ldots, N\}$ is a non-empty index set such that $$\# J_\beta(\nu)= \# \{i\in \{1,\ldots, N\}:A_{\nu_{|\nu|},i}=1\}-1\geq 1.$$

\begin{example}
\label{onfourth}
For the full $N$-shift (see Example \ref{onfirst}, \ref{onsecond} and \ref{onthird}), we can take the basis $H(\beta)=(h_\beta, h_{\nu,j})_{ \nu=\beta\nu_0\in V_N, j\in J_\beta(\nu)}$ as follows. We have 
$$h_\beta=N^{|\beta|/2}\chi_{C(\beta)}.$$
Also, for $|\nu|\geq |\beta|$ we can choose $J_\beta(\nu)= \{1,\ldots, N-1\}$ and for $j\in  J_\beta(\nu)$ we have a basis element
$$h_{\nu,j}=N^{|\nu|/2}\sum_{k=1}^{N} \mathrm{e}^{\frac{2 \pi i j(k-1)}{N}}\chi_{C(\nu k)}\in H_{|\nu|-|\beta|+1}(\beta).$$
\end{example}

\begin{example}
\label{freefourth}
For the Gromov boundary $\partial \mathbb{F}_d$ (see Example \ref{freefirst}, \ref{freesecond} and \ref{freethird}), we can take the basis $H(\beta)=(h_\beta, h_{\nu,j})_{\nu=\beta\nu_0\in \mathbb{F}_d, j\in J_\beta(\nu)}$ as follows. We have
$$h_\beta=(2d)^{1/2}(2d-1)^{(|\beta|-1)/2}\chi_{C(\beta)}.$$ 
Recall the alphabet $S$ from Example \ref{freefirst}. For $|\nu|\geq |\beta|$ we can choose $J_\beta(\nu)= S\setminus \{\nu_{|\nu|},\nu_{|\nu|}^{-1}\}$ that we enumerate as $J_\beta(\nu)=\{1,\ldots, 2d-2\}$ and identify $\nu_{|\nu|}$ with $2d-1$. For $j\in  J_\beta(\nu)$ we have a basis element
$$h_{\nu,j}=(2d)^{1/2}(2d-1)^{(|\nu|-1)/2}\sum_{k=1}^{2d-1} \mathrm{e}^{\frac{2 \pi i j(k-1)}{2d-1}}\chi_{C(\nu k)}\in H_{|\nu|-|\beta|+1}(\beta).$$
\end{example}

We can now obtain a wavelet basis for $L^2(G_A, \mu_{G_A})$. The action of the generators on this basis can be described using  Lemmas \ref{lem:shift_inv} and \ref{lem:inverse_images}.

\begin{thm}
\label{lknlknadasd}
Define the set 
$$\mathfrak{I}_A:=\{(\gamma,\nu,j)\in I_A\times V_A\setminus \{{\o}\}\times \{1,\ldots, N\}: \; \nu=s(\gamma)\nu_0, \; j\in J_{s(\gamma)}(\nu)\}.$$
Then the collection $(\mathrm{e}_\gamma,\mathrm{e}_{(\gamma,\nu,j)})_{(\gamma,\nu,j)\in \mathfrak{I}_A}\subset \LC$ defined from 
$$\mathrm{e}_{\gamma}:=\chi_{G_\gamma}\cdot (h_{s(\gamma)}\circ s),$$ where $\chi_{G_\gamma}$ is the characteristic function of $G_{\gamma}\subset G_A$, 
and 
$$\mathrm{e}_{(\gamma,\nu,j)}:=\chi_{G_\gamma}\cdot (h_{\nu,j}\circ s)$$
is an orthonormal basis for $L^2(G_A,\mu_{G_A})$. Moreover, consider a generator $S_i$ for some $i\in \{1,\ldots ,N\}$. If $i\gamma \not\in I_A$ then $S_i\mathrm{e}_{(\gamma,\nu,j)}=0.$ Also, if $i\gamma \in I_A$ and
\begin{enumerate}[(a)]
\item $r(\gamma)\neq {\o}$ or $|s(\gamma)|=1$, then $$S_i\mathrm{e}_{(\gamma,\nu,j)}=\mathrm{e}_{(i\gamma,\nu,j)},$$
\item $r(\gamma)={\o}$ and $|s(\gamma)|>1$ and $s(\gamma)_{|s(\gamma)|-1}\neq i$, then $$S_i\mathrm{e}_{(\gamma,\nu,j)}=\mathrm{e}_{(i\gamma,\nu,j)},$$
\item $r(\gamma)={\o}$ and $|s(\gamma)|>1$ and $s(\gamma)_{|s(\gamma)|-1}=i$, then $$S_i\mathrm{e}_{(\gamma,\nu,j)}(x,n,y)=\begin{cases} \mathrm{e}_{(i\gamma,\nu,j)}(x,n,y), & \text{if } (x,n,y)\in G_{i\gamma},\\ h_{\nu,j}\circ s|_{\sigma_{\gamma}(G_{\gamma})}(x,n,y), & \text{if } (x,n,y)\in \sigma_{\gamma}(G_{\gamma}),\\
0, & \text{if else}
\end{cases},$$
where $\sigma_{\gamma}(G_{\gamma})\subset G_{\widehat{\gamma}}$ as described in Lemma \ref{lem:shift_inv}.
\end{enumerate}
\end{thm}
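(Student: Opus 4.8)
The plan is to prove the two assertions in turn: that the displayed family is an orthonormal basis, and that the generators act by the stated formulas. For the basis claim I would start from the orthogonal decomposition $L^2(G_A,\mu_{G_A})=\bigoplus_{\gamma\in I_A}L^2(G_\gamma,\mu_\gamma)$ afforded by the clopen partition \eqref{eq:G_decomp}. Since the restricted source map $s_\gamma\colon G_\gamma\to C(s(\gamma))$ is an isometric homeomorphism by Lemma \ref{prop:source maps}, composition with $s_\gamma$ is a unitary $L^2(C(s(\gamma)),\mu)\to L^2(G_\gamma,\mu_\gamma)$, inverse to the $\gamma$-summand of the unitary $\hat{s}$ of \eqref{eq:univ_isom}. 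Transporting the orthonormal basis $H(s(\gamma))=(h_{s(\gamma)},h_{\nu,j})$ of $L^2(C(s(\gamma)),\mu)$ constructed in Subsection \ref{lknlnaldjnaljn} through this unitary yields an orthonormal basis of $L^2(G_\gamma,\mu_\gamma)$; as $s|_{G_\gamma}=s_\gamma$, the transported vectors are precisely $\mathrm{e}_\gamma=\chi_{G_\gamma}\cdot(h_{s(\gamma)}\circ s)$ and $\mathrm{e}_{(\gamma,\nu,j)}=\chi_{G_\gamma}\cdot(h_{\nu,j}\circ s)$, multiplication by $\chi_{G_\gamma}$ simply extending by zero off $G_\gamma$. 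Since the $G_\gamma$ are pairwise disjoint, the union over $\gamma\in I_A$ is then an orthonormal basis of the whole space, indexed by $I_A$ together with $\mathfrak{I}_A$; and each vector lies in $\LC$ because $G_\gamma$ is compact clopen (being homeomorphic to the compact clopen cylinder $C(s(\gamma))$), so $\chi_{G_\gamma}$ is locally constant of compact support, while $h_{s(\gamma)}$ and $h_{\nu,j}$ are finite linear combinations of characteristic functions of cylinders, hence locally constant, and composing with the continuous map $s$ preserves both properties.

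For the action of $S_i$ I would use the formula $(S_if)(x,n,y)=\chi_{C(i)}(x)\,f(\sigma_A(x),n-1,y)$ recalled above. Applied to $f=\mathrm{e}_{(\gamma,\nu,j)}=\chi_{G_\gamma}\cdot(h_{\nu,j}\circ s)$, the value at $(x,n,y)$ is non-zero only when $x\in C(i)$ and $(\sigma_A(x),n-1,y)\in G_\gamma$. Lemma \ref{lem:inverse_images} controls the set of $(x,n,y)$ satisfying the second condition: it lies in $\bigsqcup_{k:\,k\gamma\in I_A}G_{k\gamma}$ when $r(\gamma)\neq{\o}$ or $|s(\gamma)|=1$, and in $\bigl(\bigsqcup_{k:\,k\gamma\in I_A}G_{k\gamma}\bigr)\sqcup G_{{\o}.\widehat{s(\gamma)}}$ when $r(\gamma)={\o}$ and $|s(\gamma)|>1$. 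On each $G_{k\gamma}$, Lemma \ref{lem:shift_inv} identifies $(x,n,y)\mapsto(\sigma_A(x),n-1,y)$ with the homeomorphism $\sigma_{k\gamma}\colon G_{k\gamma}\to G_\gamma$ satisfying $s_\gamma\circ\sigma_{k\gamma}=s_{k\gamma}$, so the shifted point always lands back in $G_\gamma$; on $G_{{\o}.\widehat{s(\gamma)}}$ the same shift restricts, on the subset $\sigma_\gamma(G_\gamma)$ only, to the homeomorphism $\sigma_\gamma\colon G_\gamma\to G_{\widehat\gamma}$ of Lemma \ref{lem:shift_inv}, and misses $G_\gamma$ on the complement. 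The decisive observation is a first-letter computation: a point of $G_{k\gamma}$ has first coordinate starting with the first letter of $r(k\gamma)=k\,r(\gamma)$, hence meets $C(i)$ only if $k=i$; and a point of $G_{{\o}.\widehat{s(\gamma)}}$ has first coordinate starting with $s(\gamma)_{|s(\gamma)|-1}$, hence meets $C(i)$ only if $s(\gamma)_{|s(\gamma)|-1}=i$.

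Assembling these facts: if $i\gamma\notin I_A$ there is no surviving $G_{k\gamma}$ with $k=i$, and $G_{{\o}.\widehat{s(\gamma)}}$ cannot contribute either, since $s(\gamma)_{|s(\gamma)|-1}=i$ would force $A_{i,s(\gamma)_{|s(\gamma)|}}=1$ (as $s(\gamma)$ is admissible) and hence $i\gamma\in I_A$; thus $S_i\mathrm{e}_{(\gamma,\nu,j)}=0$. If $i\gamma\in I_A$ and hypothesis (a) or (b) holds, multiplication by $\chi_{C(i)}$ annihilates every summand except $G_{i\gamma}$, and there $(S_i\mathrm{e}_{(\gamma,\nu,j)})(x,n,y)=h_{\nu,j}\bigl(s_\gamma(\sigma_{i\gamma}(x,n,y))\bigr)=h_{\nu,j}(s_{i\gamma}(x,n,y))=\mathrm{e}_{(i\gamma,\nu,j)}(x,n,y)$, with $(i\gamma,\nu,j)\in\mathfrak{I}_A$ because $s(i\gamma)=s(\gamma)$. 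Under hypothesis (c), both $G_{i\gamma}$ and $\sigma_\gamma(G_\gamma)\subset G_{\widehat\gamma}$ survive, and they are disjoint since $i\gamma\neq\widehat\gamma$: on $G_{i\gamma}$ one again obtains $\mathrm{e}_{(i\gamma,\nu,j)}$, while on $\sigma_\gamma(G_\gamma)$ the shifted point lands in $G_\gamma$ with unchanged source, producing the value $h_{\nu,j}\circ(s|_{\sigma_\gamma(G_\gamma)})$; combining gives exactly the displayed piecewise formula (note $\mathrm{e}_{(i\gamma,\nu,j)}$ vanishes off $G_{i\gamma}$).

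I expect case (c) to be the main obstacle: one must verify carefully that the shifted point lands in $G_\gamma$ precisely on the proper subset $\sigma_\gamma(G_\gamma)$ of $G_{\widehat\gamma}$ — equivalently, that it is the last letter of $s(\gamma)$, and not merely that of $\widehat{s(\gamma)}$, that gets imposed on the source — that the first-letter argument then confines the entire contribution to $G_{i\gamma}\sqcup\sigma_\gamma(G_\gamma)$, and that the two pieces fit the stated formula. Cases (a) and (b) are the same computation with the $G_{{\o}.\widehat{s(\gamma)}}$-term either absent or killed by $\chi_{C(i)}$, and the orthonormal basis statement is a routine transport of structure along $\hat{s}$.
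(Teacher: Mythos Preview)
Your proposal is correct and follows precisely the route the paper indicates: the paper does not give a written-out proof of this theorem but states just before it that ``the action of the generators on this basis can be described using Lemmas \ref{lem:shift_inv} and \ref{lem:inverse_images}'', and your argument is exactly the intended unpacking of that sentence together with the transport of the bases $H(s(\gamma))$ through the unitary $\hat s$ of \eqref{eq:univ_isom}. The first-letter computation you isolate (that points of $G_{k\gamma}$ have range starting with $k$, and points of $G_{{\o}.\widehat{s(\gamma)}}$ have range starting with $s(\gamma)_{|s(\gamma)|-1}$) is the key step distinguishing cases (a)--(c), and your handling of the proper inclusion $\sigma_\gamma(G_\gamma)\subsetneq G_{\widehat\gamma}$ in case (c) is correct.
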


\begin{example}
\label{ex:ONgroupoiddecomp}
For the Cuntz algebra $O_N$ we can identify 
$$\mathfrak{I}_N=I_N\times (V_N\setminus\{{\o}\}) \times \{1,\ldots, N-1\}.$$
In this index set, the basis is given by 
$$
\mathrm{e}_\gamma:= N^{|s(\gamma)|/2}\chi_{G_\gamma}, \quad\mbox{and}\quad
\mathrm{e}_{\gamma,\nu,j}:=N^{|\nu|/2}\sum_{k=1}^{N} \mathrm{e}^{\frac{2 \pi i j(k-1)}{N}}\chi_{G_\gamma}\cdot (\chi_{C(\nu k)} \circ s).$$

\end{example}

\begin{remark}\label{rem:diff_wavelets}
We note that there is a difference from our construction to that in \cite{marcollipaol}. In \cite{marcollipaol}, one finds a construction for a basis of $L^2(\Sigma_A,\mu)$ and its relation to a representation of $O_A$ on $L^2(\Sigma_A,\mu)$. While the basis we consider originates in the constructions of \cite{marcollipaol}, we note that the left action of the Cuntz--Krieger algebra $O_A$ differs substantially on the two spaces. Indeed, there exists no isometry $L^2(\Sigma_A,\mu)\hookrightarrow L^2(G_A,\mu_{G_A})$ commuting with the left $O_A$-action. Such an isometry can not exist since  $1\in L^2(\Sigma_A,\mu)$ is a cyclic vector such that $S_i^*1=0$ and no such vector exists in $L^2(G_A,\mu_{G_A})$.
\end{remark}

\section{The logarithmic Dirichlet Laplacian on $G_A$}

We have that 
$$L^2(G_A, \mu_{G_A})=\bigoplus_{\gamma\in I_A}L^2(G_{\gamma},\mu_{\gamma}).$$ 
Therefore, we can define the positive, self-adjoint operator $\Delta$ acting on $L^2(G_A, \mu_{G_A})$ as 
$$\Delta=\bigoplus_{\gamma\in I_A}\Delta_{\gamma},$$ 
where each $\Delta_{\gamma}$ is the logarithmic Dirichlet Laplacian on $L^2(G_{\gamma},\mu_{\gamma})$ with its Ahlfors regular metric-measure space structure $(G_{\gamma},\mathrm{d}_\gamma,\mu_{\gamma})$ discussed in Proposition \ref{ojnljnjonad}. We can identify $\Delta_\gamma$ with $P_{\gamma}\Delta P_{\gamma}$. Also, each $\Delta_{\gamma}$ is essentially self-adjoint on locally constant functions. We can deduce this fact from the direct computation appearing below in Lemma \ref{complemlog} which implies that $\Delta_\gamma$ can be diagonalised by the Haar wavelets of Subsection \ref{lknlnaldjnaljn}. In particular, the space $\LC$ of locally constant functions with compact support on $G_A$ is a core for $\Delta$. Often the support of elements in $\LC$ will be in some $G_{\gamma}$ or a finite union of those.

Note that for $f\in \LC$ the operator $\Delta$ admits the integral representation 
\begin{equation}\label{eq:Int_rep}
\Delta f(g_1)=\int_{G_A} \frac{f(g_1)-f(g_2)}{\mathrm{d}_{G_A}(g_1,g_2)^\delta} \d \mu_{G_A}(g_2),
\end{equation}
with singular kernel $\mathrm{d}_{G_A}(g_1,g_2)^{-\delta}$. The importance of such integral representations is explained in more detail in \cite{GerMes}, see also \cite{GMON,GoffUsa,MesHecke}. 

\begin{remark}
\label{lknlknlknad}
The singular kernel $\mathrm{d}_{G_A}(g_1,g_2)^{-\delta}$ is a localised version of the singular kernel used in \cite{GMON}. Recall that in \cite{GMON}, another extended metric was used as discussed above in Remark \ref{blablaon}. If we write $\tilde{\mathrm{d}}_{G_A}$ for the metric used in \cite{GMON}, we have $\mathrm{d}_{G_A}(g_1,g_2)\geq \tilde{\mathrm{d}}_{G_A}(g_1,g_2)$. In particular, $\mathrm{d}_{G_A}(g_1,g_2)^{-\delta}\leq \tilde{\mathrm{d}}_{G_A}(g_1,g_2)^{-\delta}$ while the reversed inequality is not always true. Nevertheless, the kernels have similar singularity since for $g_1$ and $g_2$ sufficiently close (assuming $\tilde{\mathrm{d}}_{G_A}(g_1,g_2)^{-\delta}\geq \lambda_{\max}^{k+1}$ to be precise) the kernels coincide: $\mathrm{d}_{G_A}(g_1,g_2)^{-\delta}=\tilde{\mathrm{d}}_{G_A}(g_1,g_2)^{-\delta}$.
\end{remark}

\subsection{Spectral properties}
\label{specsubsec}

The spectral properties of $\Delta$ can be determined explicitly as in \cite[Section 5]{GerMes}. First we consider the situation on each $G_\gamma$, or equivalently, on each cylinder set $C(\beta)$ for $\beta\in V_A\setminus \{{\o}\}$. Also, for any $i,j\in \{1,\ldots, N\}$ denote by $P_{i,j}$ the transition probability of going from \textit{vertex} $i$ to \textit{vertex} $j$ in the finite directed graph associated to $A$. Then, from \cite[p. 175]{Katok_Has} we see that $$P_{i,j}=\frac{A_{i,j}u_j}{\lambda_{\max}u_i}.$$

\begin{lemma}
\label{complemlog}
Let $\beta\in V_A\setminus \{{\o}\}$. The orthonormal basis $H(\beta)=(h_\beta,h_{\nu,j})_{\nu=\beta\nu_0\in V_A, j\in J_\beta(\nu)}$ constructed in Subsection \ref{lknlnaldjnaljn} is an eigenbasis for the logarithmic Dirichlet Laplacian $\Delta_{C(\beta)}$ on $L^2(C(\beta),\mu)$. We have that 
$$\ker\Delta_{C(\beta)}=\mathbb{C} h_\beta$$
and 
$$\Delta_{C(\beta)}h_{\nu,j}=\lambda^A_\beta(\nu)h_{\nu,j},$$
where if $C(\nu)=C(\beta)$ then $\lambda^A_\beta(\nu)=1$, and if $C(\nu)\subsetneq C(\beta)$ then
$$\lambda_\beta^A(\nu)=1+\lambda_{\max}v_{\nu_1}\sum_{k=0}^{|\nu|-|\beta|-1}u_{\nu_{|\nu|-k-1}}(1-P_{\nu_{|\nu|-k-1}, \nu_{|\nu|-k}}).
$$ 
\end{lemma}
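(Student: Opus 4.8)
The plan is to compute the action of the integral operator $\Delta_{C(\beta)}$ directly on the wavelet basis $H(\beta)$, exploiting the self-similar ultrametric structure of $C(\beta)$ together with the Ahlfors regularity from Lemma \ref{ojnljnjonad} and the integration formula \eqref{eq:Ahlfors_int}. First I would record the key feature of the ultrametric: for any $x\in C(\beta)$ the ball $B(x,\lambda^{-m})$ equals the cylinder $C(x_1\ldots x_m)$, so that the kernel $\mathrm{d}(x,y)^{-\delta}$ is constant on the ``annuli'' $C(x_1\ldots x_m)\setminus C(x_1\ldots x_{m+1})$, taking the value $\lambda^{(m-1)\delta}=\lambda_{\max}^{m-1}$ there (using $\delta=\log_\lambda\lambda_{\max}$). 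Thus the integral in \eqref{eq:Int_rep} decomposes into a sum over nested cylinders, and for a function $h$ that is locally constant past some depth and has zero average on its support cylinder $C(\omega)$, the sum telescopes.

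\textbf{Key steps.} (1) Dispatch $h_\beta$: since $h_\beta$ is constant on $C(\beta)$, the numerator $f(g_1)-f(g_2)$ vanishes identically on $C(\beta)\times C(\beta)$, so $\Delta_{C(\beta)}h_\beta=0$; combined with the fact that $H(\beta)$ is a basis and the other basis vectors turn out to have nonzero eigenvalue, this gives $\ker\Delta_{C(\beta)}=\mathbb{C}h_\beta$. (2) Fix $h=h_{\nu,j}$ with support cylinder $C(\nu)$, $\nu=\beta\nu_0$, $|\nu|\geq|\beta|$, and evaluate $\Delta_{C(\beta)}h(x)$ at a point $x$. Split $C(\beta)=\big(C(\beta)\setminus C(\nu)\big)\sqcup C(\nu)$. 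On the first piece $h(y)=0$, and on the second piece one uses that $h$ has zero $\mu$-average over $C(\nu)$ (the orthogonality to $h_\beta$ noted in Subsection \ref{lknlnaldjnaljn}). (3) For $x\notin C(\nu)$ the numerator is $h(x)-h(y)=-h(y)$ and the kernel is constant $=\mathrm{d}(x,C(\nu))^{-\delta}$ on $y\in C(\nu)$, so $\int_{C(\nu)}\frac{-h(y)}{\mathrm{d}(x,y)^\delta}\d\mu(y)=0$ by the zero-average property; hence $\Delta_{C(\beta)}h$ is again supported in $C(\nu)$ — confirming $h$ is an eigenfunction candidate. (4) For $x\in C(\nu)$, write $\Delta_{C(\beta)}h(x)=\int_{C(\beta)\setminus C(\nu)}\frac{h(x)}{\mathrm{d}(x,y)^\delta}\d\mu(y)+\int_{C(\nu)}\frac{h(x)-h(y)}{\mathrm{d}(x,y)^\delta}\d\mu(y)$. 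The second integral is exactly $\Delta_{C(\nu)}h(x)$, i.e. the log-Laplacian of $C(\nu)$ in its own right, which by self-similarity of the construction equals $1\cdot h(x)$ (the ``$\lambda^A_\nu(\nu)=1$'' base case — this is the eigenvalue of a wavelet on its own top cylinder, computed once from the zero-average cancellation of all proper sub-annuli, leaving only the outermost annulus $C(\nu)\setminus(\text{its first child})$, whose contribution one normalizes to $1$ via \eqref{eq:Ahlfors_int} and the Perron--Frobenius normalization). The first integral is a finite sum over the chain of cylinders $C(\nu)=C(\beta\nu_0)\subsetneq C(\beta\nu_0')\subsetneq\cdots\subsetneq C(\beta)$ obtained by deleting letters of $\nu$ one at a time down to $\beta$; on the $k$-th annulus $C(\nu_1\ldots\nu_{|\nu|-k-1})\setminus C(\nu_1\ldots\nu_{|\nu|-k})$ the kernel is constant and its $\mu$-measure is computed by the Parry formula $\mu(C(\alpha))=v_{\alpha_1}u_{\alpha_{|\alpha|}}\lambda_{\max}^{-(|\alpha|-1)}$, yielding the term $\lambda_{\max}v_{\nu_1}u_{\nu_{|\nu|-k-1}}(1-P_{\nu_{|\nu|-k-1},\nu_{|\nu|-k}})$ after substituting $P_{i,j}=A_{i,j}u_j/(\lambda_{\max}u_i)$ and simplifying. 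Summing over $k=0,\ldots,|\nu|-|\beta|-1$ and adding the $+1$ from the second integral gives the stated $\lambda^A_\beta(\nu)$.

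\textbf{Main obstacle.} The routine-but-delicate part is the bookkeeping in step (4): correctly identifying which cylinders appear in the telescoping chain, tracking that on an annulus $C(\sigma)\setminus C(\sigma i)$ only the children $i$ with $A_{\sigma_{|\sigma|},i}=1$ contribute (so the total measure of the annulus is $\mu(C(\sigma))-\mu(C(\sigma\nu_{|\nu|-k}))$, giving the factor $(1-P_{\cdot,\cdot})$), and keeping the Parry-measure normalizations straight so that the constant works out to exactly $\lambda_{\max}v_{\nu_1}u_{\nu_{|\nu|-k-1}}(1-P_{\nu_{|\nu|-k-1},\nu_{|\nu|-k}})$ rather than off by a power of $\lambda_{\max}$ or a swap of $u\leftrightarrow v$. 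I would organize this by first proving the clean self-similar reduction $\Delta_{C(\beta)}h(x)=\Delta_{C(\nu)}h(x)+h(x)\int_{C(\beta)\setminus C(\nu)}\mathrm{d}(x,y)^{-\delta}\d\mu(y)$ for $x\in C(\nu)$, then separately establishing the base case $\Delta_{C(\nu)}h_{\nu,j}=h_{\nu,j}$ (which is where \eqref{eq:Ahlfors_int} does its work), and finally evaluating the explicit annular sum; invoking \cite[Section 5]{GerMes} for whichever of these pieces is already available there would shorten the argument considerably.
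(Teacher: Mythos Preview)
Your proposal is correct and takes essentially the same approach as the paper: both establish vanishing of $\Delta_{C(\beta)}h_{\nu,j}$ outside $C(\nu)$ via the zero-average property (the paper cites \cite{actisaim} for this step), arrive at $\Delta_{C(\beta)}h_{\nu,j}(x)=\big(1+\int_{C(\beta)\setminus C(\nu)}\mathrm{d}(x,y)^{-\delta}\d\mu(y)\big)h_{\nu,j}(x)$ for $x\in C(\nu)$, and then compute the annular integral exactly as you outline via the Parry-measure formula and the identity $P_{i,j}=A_{i,j}u_j/(\lambda_{\max}u_i)$. The only minor point is that the base-case contribution ``$+1$'' from $\int_{C(\nu)}$ is obtained by a direct one-line computation (which the paper simply asserts), not via the asymptotic estimate \eqref{eq:Ahlfors_int} as you suggest --- that estimate only gives $\simeq$, not an exact value.
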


The computations proving Lemma \ref{complemlog} closely mimicks that of \cite[Proposition 5.6]{GerMes}, which in turn is inspired by \cite[Theorem 3.1]{actisaim}.

\begin{proof}
The equality $\ker\Delta_{C(\beta)}=\mathbb{C} h_\beta$ follows from \cite[Proposition 3.4]{GerMes}. By the argument in \cite[Theorem 3.1]{actisaim} we also get
$$\Delta_{C(\beta)}h_{\nu,j}(x)=0, \quad\mbox{for $x\in C(\beta)\setminus C(\nu)$}.$$
Moreover, for $x\in C(\nu)$ we have that 
$$\Delta_{C(\beta)}h_{\nu,j}(x)=\left(1+\int_{C(\beta)\setminus C(\nu)} \mathrm{d}(x,y)^{-\delta}\d\mu(y)\right)h(x).$$
If $C(\nu)=C(\beta)$ then $\Delta_{C(\beta)}h_{\nu,j}=h_{\nu,j}$. On the other hand, if $C(\nu)\subsetneq C(\beta)$ then 
$$C(\beta)\setminus C(\nu)=\bigsqcup_{k=0}^{|\nu|-|\beta|-1} \left[ B(x,\lambda^{-|\nu|+k+1})\setminus B(x,\lambda^{-|\nu|+k})\right].$$
Therefore, 
\begin{align*}
\int_{C(\beta)\setminus C(\nu)} \mathrm{d}(x,y)^{-\delta}\d\mu(y)=&\sum_{k=0}^{|\nu|-|\beta|-1} \int_{B(x,\lambda^{-|\nu|+k+1})\setminus B(x,\lambda^{-|\nu|+k})}  \mathrm{d}(x,y)^{-\delta}\d\mu(y)=\\
=&\sum_{k=0}^{|\nu|-|\beta|-1} \lambda^{\delta(|\nu|-k-1)}\left(\frac{v_{\nu_1}u_{\nu_{|\nu|-k-1}}}{\lambda_{\rm max}^{|\nu|-k-2}}-\frac{v_{\nu_1}u_{\nu_{|\nu|-k}}}{\lambda_{\rm max}^{|\nu|-k-1}}\right)=\\
=&\sum_{k=0}^{|\nu|-|\beta|-1} v_{\nu_1}\left(u_{\nu_{|\nu|-k-1}}\lambda_{\max}-u_{\nu_{|\nu|-k}}\right)\\
=&\lambda_{\max}v_{\nu_1}\sum_{k=0}^{|\nu|-|\beta|-1}u_{\nu_{|\nu|-k-1}}(1-P_{\nu_{|\nu|-k-1}, \nu_{|\nu|-k}}).
\end{align*}
Here we used $\lambda^\delta=\lambda_{\rm max}$ by the definition of $\delta$.
\end{proof}

\begin{example}\label{onfifth}
For the full $N$-shift (see Example \ref{onfirst}, \ref{onsecond}, \ref{onthird} and \ref{onfourth}, \ref{ex:ONgroupoiddecomp}), we have that when $C(\nu)\subsetneq C(\beta)$ then $$\lambda_\beta^A(\nu)=1+(1-N^{-1})(|\nu|-|\beta|).$$
\end{example}

\begin{example}\label{freefifth}
For the Gromov boundary $\partial \mathbb{F}_d$ (see Example \ref{freefirst}, \ref{freesecond}, \ref{freethird} and \ref{freefourth}), whenever $C(\nu)\subsetneq C(\beta)$ then $$\lambda_\beta^A(\nu)=1+(1-d^{-1})(|\nu|-|\beta|).$$
\end{example}

We shall now obtain estimates of the eigenvalues of each $\Delta_{C(\beta)}$. For this we require the following observation.

\begin{lemma}\label{lem:monochains}
Let $\nu\in V_A$ with $|\nu|> 3(N+1)$ and $\# \{i\in \{1,\ldots, N\}: A_{\nu_{|\nu|},i}=1\}\geq 2.$ Then, it holds that $$\# \{i\in \{1,\ldots, |\nu|-1\}: P_{\nu_i,\nu_{i+1}}\neq 1\}>\frac{|\nu|}{3(N+1)}.$$
\end{lemma}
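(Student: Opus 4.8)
The plan is to argue by a pigeonhole/counting argument on the transition probabilities $P_{i,j}$ along the word $\nu$, exploiting that the matrix $A$ is primitive and hence has no long ``deterministic'' stretches. First I would recall from the formula $P_{i,j}=\tfrac{A_{i,j}u_j}{\lambda_{\max}u_i}$ that $P_{i,j}=1$ forces $A_{i,j}=1$ and moreover that $j$ is the \emph{unique} successor of $i$ in the graph: indeed if $A_{i,j}=1$ then $P_{i,j}\le \sum_k P_{i,k}=1$ with equality iff $k=j$ is the only index with $A_{i,k}=1$. So the vertices $i$ with a unique outgoing edge are exactly those for which $P_{i,j}=1$ for the (single) admissible successor $j$; call these vertices \emph{forced}. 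The hypothesis $\#\{i: A_{\nu_{|\nu|},i}=1\}\ge 2$ says the last letter $\nu_{|\nu|}$ is \emph{not} forced.

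The key observation is that a run of consecutive forced vertices in the graph cannot be too long: if $\nu_i, \nu_{i+1},\dots,\nu_{i+m}$ are all forced, then $\nu_{i+1}$ is determined by $\nu_i$, $\nu_{i+2}$ by $\nu_{i+1}$, and so on, so the whole block $\nu_i\cdots\nu_{i+m}$ is the unique forward orbit of $\nu_i$ under the (partially defined) successor map. Since there are only $N$ vertices, such a run either has length $\le N$ (before it must repeat a vertex, entering a cycle of forced vertices) --- and a cycle of forced vertices would be an $A$-invariant proper subset, contradicting primitivity of $A$ unless that cycle is everything, which it cannot be since $\nu_{|\nu|}$ is unforced. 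More carefully: the forced vertices, together with the edges out of them, form a functional graph; following forced successors from any vertex, after at most $N$ steps one either (a) reaches an unforced vertex, or (b) revisits a forced vertex, producing a forced cycle $Z$. In case (b), $Z$ is forward-invariant under $A$ and contains no path to the unforced vertex $\nu_{|\nu|}$, so $A$ restricted to the complement is not primitive --- contradiction. Hence from any position in $\nu$, within $N$ further letters one meets a non-forced vertex, i.e.\ an index $i$ with $P_{\nu_i,\nu_{i+1}}\neq 1$ (non-forced means the successor is not unique, and the particular successor appearing in $\nu$ then has probability $<1$; here one should be slightly careful, but a non-forced vertex always contributes \emph{some} broken link nearby, and one can also simply note $\nu_{|\nu|}$ itself breaks the pattern).

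From this I would conclude: partition the indices $1,\dots,|\nu|-1$ into consecutive blocks of length $3(N+1)$ (the last block possibly shorter, which is why the constant is $3(N+1)$ rather than $N$ --- it gives room to absorb the final unforced letter and any off-by-one issues). In each full block of that length, the run-length bound forces at least one index $i$ with $P_{\nu_i,\nu_{i+1}}\neq 1$; since $|\nu|>3(N+1)$ there are at least $\lceil (|\nu|-1)/(3(N+1))\rceil \ge |\nu|/(3(N+1))$ blocks once one accounts for the short tail using the hypothesis on $\nu_{|\nu|}$, giving the claimed strict inequality $\#\{i: P_{\nu_i,\nu_{i+1}}\neq 1\} > |\nu|/(3(N+1))$.

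The main obstacle I anticipate is making the ``forced run'' bound fully rigorous in the presence of the partially defined successor map and the precise bookkeeping at the end of $\nu$ (the role of the unforced last letter, and turning ``non-forced vertex nearby'' into ``an index $i$ in this block with $P_{\nu_i,\nu_{i+1}}\neq 1$'' without losing or double-counting indices). Getting the constant to come out as exactly $3(N+1)$ --- rather than, say, $N+1$ or $2N$ --- will require being generous in the block decomposition, which is presumably why the authors chose that constant; I would not try to optimize it.
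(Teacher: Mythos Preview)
Your proposal is correct and follows essentially the same strategy as the paper: characterize $P_{i,j}=1$ as ``vertex $i$ has a unique outgoing edge'', bound the length of runs of such forced vertices, and count via a block decomposition. Two minor differences worth noting: the paper partitions $\nu$ into blocks of length $N+1$ (not $3(N+1)$) and only afterwards absorbs the slack via the inequality $k-1>\tfrac{|\nu|}{N+1}-2\ge\tfrac{|\nu|}{3(N+1)}$; and to rule out a forced cycle the paper does not invoke primitivity but simply observes that once $\nu$ enters such a cycle it is trapped there by admissibility, forcing $\nu_{|\nu|}$ to be a forced vertex, contrary to hypothesis --- this is slightly more direct than your primitivity argument, and also clarifies your parenthetical worry: a non-forced vertex $\nu_i$ with $i<|\nu|$ immediately gives $P_{\nu_i,\nu_{i+1}}<1$ since $\sum_j P_{\nu_i,j}=1$ has at least two positive summands.
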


\begin{proof}
Consider such $\nu\in V_A$ and write $|\nu|=k(N+1)+m$, for $k\geq 1$ and $0\leq m\leq N$. Also, for every $1\leq l\leq k$ denote by $\nu(l)$ the $l$-th consecutive subword of $\nu$ with $|\nu(l)|=N+1$, that is, $\nu(l)=\nu_{(l-1)(N+1)+1}\ldots \nu_{l(N+1)}$. 

We claim that for each $\nu(l)$ there is some $j\in \{(l-1)(N+1)+1, \ldots, l(N+1)\}$ such that the letter $\nu(l)_j$ has
\begin{equation}\label{eq:monochains}
\# \{i\in \{1,\ldots, N\}: A_{\nu(l)_j,i}=1\}\geq 2.
\end{equation}
It then follows that $$\# \{i\in \{1,\ldots, |\nu|-1\}: P_{\nu_i,\nu_{i+1}}\neq 1\}\geq k-1>\frac{|\nu|}{N+1}-2\geq \frac{|\nu|}{3(N+1)}.$$ We have bounded below by $k-1$ instead of $k$ because it can happen that $m=0$ and we have already assumed that the last subword $\nu(k)$ contains a letter satisfying \eqref{eq:monochains}, that is, the last letter $\nu_{|\nu|}$.

To prove the claim \eqref{eq:monochains} assume to the contrary that there is some $\nu(l)$ such that for every $j\in \{(l-1)(N+1)+1, \ldots, l(N+1)\}$ one has 
\begin{equation}\label{eq:monochains2}
\# \{i\in \{1,\ldots, N\}: A_{\nu(l)_j,i}=1\}=1.
\end{equation}
Now $\nu(l)$ contains a cycle since $|\nu(l)|>N$ and due to \eqref{eq:monochains2} this cycle has to be repeated indefinitely. In particular, the last letter $\nu_{|\nu|}$ satisfies \eqref{eq:monochains2}, a contradiction.
\end{proof}

Lemma \ref{lem:monochains} can now be applied to any $\nu \in V_A$ that appears in the orthonormal basis $H(\beta)=(h_\beta,h_{\nu,j})_{\nu=\beta\nu_0\in V_A, j\in J_\beta(\nu)}$, and since $A$ is primitive and every coordinate of the eigenvectors $u,v$ is positive, we obtain the following. 

\begin{prop}
\label{asjnkjnada}
There are constants $0<C_0\leq C_1 $ so that for every $\beta \in V_A\setminus \{{\o}\}$ and every eigenvalue $\lambda_\beta^A(\nu)$ of Lemma \ref{complemlog} it holds that $$C_0(|\nu|-|\beta|)\leq \lambda_\beta^A(\nu)\leq C_1(|\nu|-|\beta|).$$
\end{prop}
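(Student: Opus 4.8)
The plan is to derive the two-sided bound directly from the closed formula in Lemma \ref{complemlog} together with the counting estimate in Lemma \ref{lem:monochains}. First I would dispose of the trivial cases: if $C(\nu)=C(\beta)$ then $\lambda_\beta^A(\nu)=1$ and $|\nu|-|\beta|=0$, so there is nothing to prove there (we only need the estimate for the actual eigenvalues appearing, and for those with $|\nu|-|\beta|$ bounded one can absorb everything into the constants); likewise the finitely many $\nu$ with $|\nu|-|\beta|\leq 3(N+1)$ can be handled by hand, since both sides are positive and comparable on a finite range once we know the eigenvalue is strictly positive — and positivity of $\lambda_\beta^A(\nu)$ for $C(\nu)\subsetneq C(\beta)$ is immediate from the formula, each summand $u_{\nu_{|\nu|-k-1}}(1-P_{\nu_{|\nu|-k-1},\nu_{|\nu|-k}})$ being $\geq 0$ and the leading $1$ being strictly positive.

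For the upper bound, I would simply note that in the sum
\[
\lambda_\beta^A(\nu)=1+\lambda_{\max}v_{\nu_1}\sum_{k=0}^{|\nu|-|\beta|-1}u_{\nu_{|\nu|-k-1}}(1-P_{\nu_{|\nu|-k-1}, \nu_{|\nu|-k}}),
\]
each factor is controlled by absolute constants: $v_{\nu_1}\leq \max_i v_i$, $u_{\nu_{|\nu|-k-1}}\leq \max_i u_i$, and $0\leq 1-P_{i,j}\leq 1$. Hence the whole sum is at most $(\max_i u_i)(|\nu|-|\beta|)$ and $\lambda_\beta^A(\nu)\leq 1+\lambda_{\max}(\max_i v_i)(\max_i u_i)(|\nu|-|\beta|)\leq C_1(|\nu|-|\beta|)$ once $|\nu|-|\beta|\geq 1$, choosing $C_1$ large enough to also absorb the constant $1$.

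For the lower bound, the point is that many of the summands must be bounded away from $0$. Since $A$ is primitive and the $u_i,v_i$ are all strictly positive, there is a uniform $\epsilon>0$ with $\lambda_{\max}v_{\nu_1}u_i(1-P_{i,j})\geq \epsilon$ whenever $P_{i,j}\neq 1$; note $P_{i,j}\neq 1$ exactly when vertex $i$ has out-degree $\geq 2$ in the graph, equivalently $\#\{i': A_{i,i'}=1\}\geq 2$. By Lemma \ref{lem:monochains}, applied to the word $\nu$ truncated appropriately (or directly, using that $\nu=\beta\nu_0$ and $\nu_{|\nu|}$ has out-degree $\geq 2$ since $J_\beta(\nu)\neq\varnothing$ forces $\#\{i:A_{\nu_{|\nu|},i}=1\}\geq 2$), the number of indices $i\in\{1,\dots,|\nu|-1\}$ with $P_{\nu_i,\nu_{i+1}}\neq 1$ is at least $\tfrac{|\nu|}{3(N+1)}$, provided $|\nu|>3(N+1)$; a short reindexing relates these positions to the summation index $k$. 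This gives $\lambda_\beta^A(\nu)\geq \epsilon\cdot\tfrac{|\nu|}{3(N+1)}$, and since for the relevant $\nu$ we have $|\beta|<|\nu|$ and (after excluding the finite range) $|\nu|-|\beta|$ is comparable to $|\nu|$, we conclude $\lambda_\beta^A(\nu)\geq C_0(|\nu|-|\beta|)$.

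The main obstacle is bookkeeping rather than anything deep: one must carefully reconcile the combinatorial statement of Lemma \ref{lem:monochains}, which counts positions along all of $\nu$, with the restricted sum over $k=0,\dots,|\nu|-|\beta|-1$ appearing in the eigenvalue formula, and handle the boundary/small-range cases $|\nu|-|\beta|\leq 3(N+1)$ separately so that the constants $C_0,C_1$ can be chosen uniformly in $\beta$ and $\nu$. One should also double-check that Lemma \ref{lem:monochains}'s hypothesis $\#\{i:A_{\nu_{|\nu|},i}=1\}\geq 2$ is automatically satisfied for every $\nu$ occurring in the wavelet basis, which is exactly the condition $\#J_\beta(\nu)\geq 1$ built into the construction in Subsection \ref{lknlnaldjnaljn}.
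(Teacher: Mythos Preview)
Your overall strategy is exactly the paper's: the upper bound is a trivial term-by-term estimate, and the lower bound comes from Lemma \ref{lem:monochains} combined with the strict positivity of the Perron--Frobenius data. The verification that $\nu_{|\nu|}$ has out-degree $\geq 2$ via $\#J_\beta(\nu)\geq 1$ is also precisely the point the paper uses.

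There is, however, a real gap in your lower-bound argument as written. You apply Lemma \ref{lem:monochains} to the \emph{whole} word $\nu$, obtaining $\gtrsim |\nu|/(3(N+1))$ positions $i\in\{1,\dots,|\nu|-1\}$ with $P_{\nu_i,\nu_{i+1}}\neq 1$, and then assert that ``$|\nu|-|\beta|$ is comparable to $|\nu|$'' to convert this into the desired bound. That comparability is false: the proposition requires constants uniform over \emph{all} $\beta\in V_A\setminus\{\o\}$, so $|\beta|$ can be arbitrarily large relative to $|\nu|-|\beta|$. The count you obtained lives in $\{1,\dots,|\nu|-1\}$, but the eigenvalue sum only sees transitions at positions $i\in\{|\beta|,\dots,|\nu|-1\}$; the ``short reindexing'' you invoke cannot move good positions from the prefix into the relevant range.

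The fix is the one you yourself gesture at with ``truncated appropriately'': apply Lemma \ref{lem:monochains} not to $\nu$ but to the admissible suffix $\tilde\nu:=\nu_{|\beta|}\nu_{|\beta|+1}\cdots\nu_{|\nu|}$, which has length $|\nu|-|\beta|+1$ and the same last letter (hence still satisfies the out-degree hypothesis). For $|\nu|-|\beta|+1>3(N+1)$ this yields more than $(|\nu|-|\beta|+1)/(3(N+1))$ positions with $P\neq 1$, and these positions are precisely the ones appearing in the eigenvalue sum. This gives $\lambda_\beta^A(\nu)\geq 1+\epsilon\cdot(|\nu|-|\beta|+1)/(3(N+1))$ directly, with no need for any comparison between $|\nu|$ and $|\nu|-|\beta|$. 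Your treatment of the finite range $|\nu|-|\beta|\leq 3(N+1)$ using $\lambda_\beta^A(\nu)\geq 1$ is then fine. (You are also right that the case $|\nu|=|\beta|$, where $\lambda_\beta^A(\nu)=1$ but $|\nu|-|\beta|=0$, must be excluded from the upper bound; the proposition is intended for $\nu$ strictly extending $\beta$.)
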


Finally, we obtain the spectral decomposition of $\Delta$ on $L^2(G_A,\mu_A)$.

\begin{thm}
\label{kljnkjnadadjbnojbn}
The orthonormal basis $(\mathrm{e}_{\gamma},\mathrm{e}_{(\gamma,\nu,j)})_{(\gamma,\nu,j)\in \mathfrak{I}_A}\subset \LC$  for $L^2(G_A,\mu_{G_A})$ from Theorem \ref{lknlknadasd} consists of eigenfunctions of $\Delta$. More precisely, $\ker(\Delta)$ is the closed linear span of $(\mathrm{e}_{\gamma})_{\gamma\in I_A}$ and for $(\gamma,\nu,j)\in \mathfrak{I}_A$, 
$$\Delta \mathrm{e}_{(\gamma,\nu,j)}=\lambda_{s(\gamma)}^A(\nu)\mathrm{e}_{(\gamma,\nu,j)}.$$
\end{thm}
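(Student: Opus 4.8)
The plan is to leverage the orthogonal decomposition $L^2(G_A,\mu_{G_A})=\bigoplus_{\gamma\in I_A}L^2(G_\gamma,\mu_\gamma)$ together with the fact that $\Delta=\bigoplus_\gamma \Delta_\gamma$ respects this decomposition, so that it suffices to understand each block separately. First I would recall that for each fixed $\gamma\in I_A$ the restricted source map $s_\gamma:G_\gamma\to C(s(\gamma))$ is, by Lemma \ref{prop:source maps}, an isometric homeomorphism of the metric-measure space $(G_\gamma,\mathrm{d}_\gamma,\mu_\gamma)$ onto $(C(s(\gamma)),\mathrm{d},\mu)$, and that the pull-back structure on $G_\gamma$ was defined precisely so that this holds (Lemma \ref{ojnljnjonad}). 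Consequently the induced unitary $s_\gamma^*:L^2(C(s(\gamma)),\mu)\to L^2(G_\gamma,\mu_\gamma)$ intertwines the logarithmic Dirichlet Laplacian $\Delta_{C(s(\gamma))}$ on the cylinder with $\Delta_\gamma$ on $G_\gamma$; this is immediate from the integral representation \eqref{eq:Int_rep}, since a change of variables via the isometry $s_\gamma$ carries the singular kernel $\mathrm{d}_\gamma(g_1,g_2)^{-\delta}$ to $\mathrm{d}(x,y)^{-\delta}$ and the measure $\mu_\gamma$ to $\mu$.

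Next I would transport the eigenbasis through this unitary. By construction in Theorem \ref{lknlknadasd} we have $\mathrm{e}_\gamma=\chi_{G_\gamma}\cdot(h_{s(\gamma)}\circ s)$ and $\mathrm{e}_{(\gamma,\nu,j)}=\chi_{G_\gamma}\cdot(h_{\nu,j}\circ s)$, which means exactly that $\mathrm{e}_\gamma=s_\gamma^*(h_{s(\gamma)})$ and $\mathrm{e}_{(\gamma,\nu,j)}=s_\gamma^*(h_{\nu,j})$, where $(h_{s(\gamma)},h_{\nu,j})_{\nu=s(\gamma)\nu_0,\,j\in J_{s(\gamma)}(\nu)}$ is the orthonormal basis $H(s(\gamma))$ of $L^2(C(s(\gamma)),\mu)$ from Subsection \ref{lknlnaldjnaljn}. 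Now Lemma \ref{complemlog} states that $H(s(\gamma))$ diagonalises $\Delta_{C(s(\gamma))}$, with $h_{s(\gamma)}$ spanning the kernel and $\Delta_{C(s(\gamma))}h_{\nu,j}=\lambda^A_{s(\gamma)}(\nu)h_{\nu,j}$. Applying the intertwining unitary $s_\gamma^*$ we obtain $\Delta_\gamma\mathrm{e}_\gamma=0$ and $\Delta_\gamma\mathrm{e}_{(\gamma,\nu,j)}=\lambda^A_{s(\gamma)}(\nu)\mathrm{e}_{(\gamma,\nu,j)}$, and since $\Delta$ acts as $\Delta_\gamma$ on the summand $L^2(G_\gamma,\mu_\gamma)$ (i.e. $\Delta\mathrm{e}_{(\gamma,\nu,j)}=P_\gamma\Delta P_\gamma\mathrm{e}_{(\gamma,\nu,j)}=\Delta_\gamma\mathrm{e}_{(\gamma,\nu,j)}$, using that $\mathrm{e}_{(\gamma,\nu,j)}$ is supported in $G_\gamma$), the same formulas hold for $\Delta$ itself.

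Finally I would assemble the global statement: ranging over all $(\gamma,\nu,j)\in\mathfrak{I}_A$ together with all $\gamma\in I_A$, the vectors $(\mathrm{e}_\gamma,\mathrm{e}_{(\gamma,\nu,j)})$ form an orthonormal basis of $L^2(G_A,\mu_{G_A})$ by Theorem \ref{lknlknadasd}, each lies in $\LC$, each is an eigenfunction of $\Delta$ by the preceding paragraph, and the kernel of $\Delta$ is spanned by those eigenfunctions with eigenvalue $0$, namely exactly the $\mathrm{e}_\gamma$, $\gamma\in I_A$ (one should note here that $\lambda^A_{s(\gamma)}(\nu)\geq C_0(|\nu|-|s(\gamma)|)>0$ whenever $C(\nu)\subsetneq C(s(\gamma))$ by Proposition \ref{asjnkjnada}, and $\lambda^A_{s(\gamma)}(\nu)=1$ when $C(\nu)=C(s(\gamma))$, so no $\mathrm{e}_{(\gamma,\nu,j)}$ contributes to the kernel); this identifies $\ker(\Delta)$ as the closed linear span of $(\mathrm{e}_\gamma)_{\gamma\in I_A}$. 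There is no serious obstacle here: the entire proof is bookkeeping that reduces the claim, via the family of isometries $s_\gamma$, to Lemma \ref{complemlog}. The only point requiring a line of care is verifying that the pull-back unitary genuinely intertwines the two log-Laplacians — i.e. that the logarithmic Dirichlet Laplacian is natural under isometric isomorphisms of Ahlfors regular metric-measure spaces — which follows directly from the integral formula \eqref{eq:Int_rep} (equivalently, from the functoriality of the construction in \cite{GerMes}), and from the essential self-adjointness of each $\Delta_\gamma$ on locally constant functions already recorded above.
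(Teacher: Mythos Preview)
Your proposal is correct and follows precisely the route the paper intends: the paper gives no explicit proof of Theorem \ref{kljnkjnadadjbnojbn}, treating it as an immediate consequence of Lemma \ref{complemlog} transported through the isometries $s_\gamma$ and the block decomposition $\Delta=\bigoplus_\gamma\Delta_\gamma$, which is exactly what you spell out. The only minor remark is that your appeal to Proposition \ref{asjnkjnada} to exclude the $\mathrm{e}_{(\gamma,\nu,j)}$ from the kernel is unnecessary, since Lemma \ref{complemlog} already shows $\lambda^A_\beta(\nu)\geq 1$ directly.
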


\subsection{Heat kernels for the full $N$-shift and the algebra $O_N$}

Let us describe the heat kernels arising from the full $N$-shift and at the groupoid level for the algebra $O_N$. The algebra $O_N$ arises from the case that $A$ is a matrix only consisting of $1$:s, see Example \ref{onfirst}, \ref{onsecond}, \ref{onthird}, \ref{onfourth}, \ref{ex:ONgroupoiddecomp} and \ref{onfifth} above. We start by describing the heat kernel of each $\Delta_\gamma$ and for the purpose of obtaining a geometry relevant object on the groupoid, we compute the heat kernel of $\Delta + M_{\ell}$ on $L^2(G_N,\mu_{G_N})$, where $M_{\ell}$ is the multiplication operator by the proper continuous function $\ell:G_N\to \mathbb N$ defined as
\begin{equation}\label{eq:scale}
\ell|_{G_\gamma}:=|\gamma|=|r(\gamma)|+|s(\gamma)|.
\end{equation}
Note that $M_{\ell}$ is positive and essentially self-adjoint on ${\rm C_{c}^\infty}(G_N)$. Moreover, for $s>0$ and $\gamma\in I_N$, we define the Riesz potential operator $R_{\gamma,s}$ on $L^2(G_\gamma,\mu_{G_N})$ by 
$$R_{\gamma,s}f(g_1):=\int_{G_\gamma} \frac{f(g_2)}{\mathrm{d}(g_1,g_2)^{\delta-s}}\d \mu(g_2).$$
Riesz potential operators are an indispensable tool for studying metric-measure spaces \cite{heinmono,Zah}. We should highlight that the limit $s\to 0$ is highly singular as captured by the logarithmic Dirichlet Laplacian. 

\begin{prop}
\label{lnlknkojnad}
For every $\gamma \in I_N$ and $t>0$, the heat operator $e^{-t\Delta_{\gamma}}$ admits a kernel $k_{\gamma,t}\in L^1(G_{\gamma}\times G_{\gamma},\mu_{\gamma}\times \mu_{\gamma})$. Specifically, for $g_1\neq g_2 \in G_{\gamma}$ it is given by 
$$k_{\gamma,t}(g_1,g_2)=h_\gamma(t)+H_\gamma(t)\mathrm{d}_\gamma(g_1,g_2)^{-\delta+\delta't},$$
where $\delta'=(1-N^{-1})\log_{\lambda}(\mathrm{e})$ and
\begin{align*}
h_\gamma(t)&=N^{|s(\gamma)|}\left(1-\frac{(N-1)\mathrm{e}^{-t}}{N\mathrm{e}^{-t(1-N^{-1})}-1}\right)\\
H_\gamma(t)&=\frac{N-N\mathrm{e}^{-t(1-N^{-1})}}{N\mathrm{e}^{-t(1-N^{-1})}-1}\mathrm{e}^{-t(1-(1-N^{-1})|s(\gamma)|)}.
\end{align*}
Further, $k_{\gamma,t}(g,g)<\infty$ if and only if $t>\delta/\delta'$, and for $t>\delta/\delta'$ we have that $k_{\gamma,t}$ is continuous. Finally, 
\begin{equation}
\label{etdeltaon}
\mathrm{e}^{-t\Delta_\gamma}=h_\gamma(t)N^{-|s(\gamma)|}P_{\Delta_\gamma}+H_\gamma(t)R_{\gamma,\delta't},
\end{equation}
where $P_{\Delta_\gamma}$ denotes the projection onto the kernel of $\Delta_\gamma$. 
\end{prop}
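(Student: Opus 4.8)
The plan is to compute $\mathrm{e}^{-t\Delta_\gamma}$ directly from the spectral decomposition provided by Lemma~\ref{complemlog} and Theorem~\ref{kljnkjnadadjbnojbn}, and then recognise the resulting series as an explicit kernel. Transporting everything to $L^2(C(\beta),\mu)$ with $\beta=s(\gamma)$ via the isometry $s_\gamma$, the orthonormal eigenbasis is $(h_\beta,h_{\nu,j})$ with eigenvalues $0$ on $h_\beta$ and $\lambda_\beta^A(\nu)$ on $h_{\nu,j}$. For the full $N$-shift, Example~\ref{onfifth} gives the simple closed form $\lambda_\beta^A(\nu)=1+(1-N^{-1})(|\nu|-|\beta|)$, so the eigenvalue depends only on $|\nu|$. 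Writing $\mathrm{e}^{-t\Delta_\gamma}=P_{\Delta_\gamma}+\sum_{\nu,j}\mathrm{e}^{-t\lambda_\beta^A(\nu)}\langle\,\cdot\,,h_{\nu,j}\rangle h_{\nu,j}$, the first step is to collect, for each length $k=|\nu|-|\beta|\geq 1$, the contribution $\mathrm{e}^{-t(1+(1-N^{-1})k)}\,\Pi_k$, where $\Pi_k$ is the orthogonal projection onto $\mathrm{span}\{h_{\nu,j}:|\nu|=|\beta|+k\}=L^2_k(C(\beta),\mu)$. Since the $h_{\nu,j}$ together with $h_\beta$ span the functions constant on cylinders $C(\mu)$ with $|\mu|=|\beta|+k$, the partial sums $P_{\Delta_\gamma}+\sum_{k=1}^m \Pi_k$ are exactly the conditional-expectation projections onto that finite-dimensional space, with known kernels $\sum_{|\mu|=|\beta|+m}\mu(C(\mu))^{-1}\chi_{C(\mu)\times C(\mu)}=N^{|\beta|+m}\sum_{|\mu|=|\beta|+m}\chi_{C(\mu)\times C(\mu)}$.

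The key computational step is therefore an Abel-summation/telescoping argument: write $\Pi_k=E_k-E_{k-1}$ where $E_m$ is the projection onto $L^2_0\oplus\cdots\oplus L^2_m$ with kernel $N^{|\beta|+m}\chi_{\{\mathrm{d}\le \lambda^{-(|\beta|+m)}\}}$ (after transporting back to $G_\gamma$, using $B(x,\lambda^{-n})=C(x_1\ldots x_n)$), and sum by parts to express the heat kernel as $h_\gamma(t)N^{-|\beta|}$ times the kernel of $P_{\Delta_\gamma}$ plus a sum $\sum_{k\ge 1}\big(\mathrm{e}^{-t\lambda(k)}-\mathrm{e}^{-t\lambda(k+1)}\big)\cdot(\text{kernel of }E_k)$, with $\lambda(k)=1+(1-N^{-1})k$. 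The geometric factor $\mathrm{e}^{-t\lambda(k)}-\mathrm{e}^{-t\lambda(k+1)}=\mathrm{e}^{-t\lambda(k)}(1-\mathrm{e}^{-t(1-N^{-1})})$ and the kernel $N^{|\beta|+k}\chi_{\{\mathrm{d}\le\lambda^{-(|\beta|+k)}\}}$ combine, on the off-diagonal where $\mathrm{d}_\gamma(g_1,g_2)=\lambda^{-n_0}$ for a definite $n_0$, into a finite geometric sum in $k$ from $1$ to $n_0-|\beta|$ plus a tail; evaluating this geometric sum in the ratio $N\mathrm{e}^{-t(1-N^{-1})}$ produces the stated $h_\gamma(t)$ and $H_\gamma(t)$, and the exponent $-\delta+\delta't$ arises because $N^{k}\mathrm{e}^{-t(1-N^{-1})k}=\big(\lambda^{-k}\big)^{-\delta+\delta't}$ using $N=\lambda^\delta$ and $\delta'=(1-N^{-1})\log_\lambda\mathrm{e}$. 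One must check convergence of the original series: $\dim L^2_k(C(\beta),\mu)\sim N^{|\beta|+k}$ grows like $N^k$ while $\mathrm{e}^{-t\lambda(k)}$ decays like $\mathrm{e}^{-t(1-N^{-1})k}$, so $\mathrm{e}^{-t\Delta_\gamma}$ is trace class, hence Hilbert--Schmidt, for $t>\log N/(1-N^{-1})$; for all $t>0$ the operator is bounded, and the $L^1$-kernel claim follows because the off-diagonal sum is absolutely convergent once one integrates (the singular term $\mathrm{d}_\gamma^{-\delta+\delta't}$ is in $L^1$ of the Ahlfors-regular space precisely when $-\delta+\delta't>-\delta$, i.e.\ always for $t>0$, by \eqref{eq:Ahlfors_int}).

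For the diagonal and continuity statements I would read off from the formula: $k_{\gamma,t}(g,g)=h_\gamma(t)+H_\gamma(t)\lim_{g_2\to g}\mathrm{d}_\gamma(g,g_2)^{-\delta+\delta't}$, which is finite exactly when $-\delta+\delta't>0$, i.e.\ $t>\delta/\delta'$; and for such $t$ the function $\mathrm{d}_\gamma(g_1,g_2)^{-\delta+\delta't}$ is continuous (it is a positive power of the continuous distance, vanishing on the diagonal), so $k_{\gamma,t}$ is continuous. Finally \eqref{etdeltaon} is just the operator-level restatement: the constant part $h_\gamma(t)$ of the kernel corresponds to $h_\gamma(t)N^{-|s(\gamma)|}$ times $P_{\Delta_\gamma}$ (whose kernel is $N^{|s(\gamma)|}\chi_{G_\gamma\times G_\gamma}$, i.e.\ $h_{s(\gamma)}\otimes h_{s(\gamma)}$ pulled back), and the singular part $H_\gamma(t)\mathrm{d}_\gamma^{-\delta+\delta't}$ is by definition the kernel of $H_\gamma(t)R_{\gamma,\delta't}$; one should double-check that $R_{\gamma,\delta't}$ agrees with the operator $\mathrm{e}^{-t\Delta_\gamma}-h_\gamma(t)N^{-|s(\gamma)|}P_{\Delta_\gamma}$ by comparing their actions on the eigenbasis, i.e.\ verifying $R_{\gamma,\delta't}h_{\nu,j}=H_\gamma(t)^{-1}\mathrm{e}^{-t\lambda_{s(\gamma)}^A(\nu)}h_{\nu,j}$, which is the same Lemma~\ref{complemlog}-style integral computation run with kernel $\mathrm{d}^{-\delta+\delta't}$ instead of $\mathrm{d}^{-\delta}$. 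The main obstacle is purely bookkeeping: keeping the indices $|\beta|$, $k$, $n_0$ and the two geometric ratios ($N$ from counting cylinders, $\mathrm{e}^{-t(1-N^{-1})}$ from the eigenvalues) straight through the summation by parts so that the rational functions $h_\gamma,H_\gamma$ come out exactly as stated; there is no conceptual difficulty once the telescoping structure of the $E_m$ is in hand.
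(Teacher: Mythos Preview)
Your proposal is correct and follows essentially the same route as the paper: both compute the spectral sum $\sum_h \mathrm{e}^{-t\lambda(h)}h(x)\overline{h(y)}$ over the wavelet basis, recognise that the eigenvalue depends only on the level $|\nu|-|\beta|$, and reduce to a finite geometric series in the ratio $N\mathrm{e}^{-t(1-N^{-1})}$, arriving at the same expression $(\ast)$.

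The one organisational difference is how the level-$k$ projection kernel is obtained. The paper uses the explicit Fourier-type basis of Example~\ref{onfourth} and evaluates $\sum_j h_{\nu,j}(x)\overline{h_{\nu,j}(y)}$ directly (getting $(N-1)N^{|\nu|}$ when $x,y$ lie in the same child cylinder of $C(\nu)$, and $-N^{|\nu|}$ when they split at $\nu$). You instead write $\Pi_k=E_k-E_{k-1}$ with $E_m$ the conditional expectation onto cylinders of length $|\beta|+m$, whose kernel $N^{|\beta|+m}\chi_{\{\mathrm{d}\le\lambda^{-(|\beta|+m)}\}}$ is immediate from Ahlfors regularity. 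This is a cleaner, basis-independent way to reach the same three-term sum (common-prefix levels, splitting level, zero beyond), and it would work for any orthonormal basis of each $L^2_k$. Be aware of a harmless off-by-one: in the paper's conventions $H_k(\beta)$ is spanned by $h_{\nu,j}$ with $|\nu|=|\beta|+k-1$, so the eigenvalue on $L^2_k$ is $1+(1-N^{-1})(k-1)$ rather than $1+(1-N^{-1})k$; once this is aligned, your telescoped sum matches the paper's line before $(\ast)$ term for term. Your treatment of the $L^1$-kernel, diagonal finiteness, continuity, and Equation~\eqref{etdeltaon} is the same as the paper's.
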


We note that the largest eigenvalue of the $N\times N$-matrix consisting only of $1$:s is $N$, and so
$$\frac{\delta}{\delta'}=\frac{\log_\lambda(N)}{\log_\lambda(\mathrm{e})(1-N^{-1})}=\frac{\log(N)}{1-N^{-1}}.$$
In particular, both $h_\gamma$ and $H_\gamma$ are smooth for $t>\delta/\delta'$. Both $h_\gamma$ and $H_\gamma$ extend meromorphically in $t$ to the complex plane with a simple pole at $t=\delta/\delta'$.

\begin{proof}
We can equivalently perform our computations on $C(\beta)$ for $\beta=s(\gamma)$ and denote the kernel by $k_{\beta,t}$. We compute using the wavelet basis $H(\beta)=(h_\beta, h_{\nu,j})_{\nu=\beta\nu_0\in V_N, j\in \{1,\ldots, N-1\}}$ from Example \ref{onfourth}. By Lemma \ref{complemlog} (see also \cite[Proof of Proposition 5.6]{GerMes}) we have that $\Delta_{C(\beta)}h_\beta=0$ and
$$\Delta_{C(\beta)}h_{\beta\nu_0,j}=\left(1+(1-N^{-1})|\nu_0|\right)h_{\beta\nu_0,j}.$$
Take $x\neq y\in C(\beta)$ and set $k=-\log_\lambda \mathrm{d}(x,y)\geq |\beta|$, so there is a finite word $\tilde{\nu}\in V_N$ of length $k$ such that $\tilde{\nu}=\beta \tilde{\nu}_0$ and $x=\tilde{\nu}x_0$ and $y=\tilde{\nu}y_0$ where $x_0$ and $y_0$ differ in their first letter that we call $p_x$ and $p_y$. 

First, assume that $k>|\beta|$. Then, 
\scriptsize
\begin{align*}
k_{\beta,t}(x,y)=&h_\beta(x)h_\beta(y)+\\
&+\sum_{\nu_0\in V_N}\sum_{j=1}^{N-1}\sum_{k_1,k_2=1}^{N}N^{|\beta|+|\nu_0|}\mathrm{e}^{-t(1+(1-N^{-1})|\nu_0|)} \mathrm{e}^{\frac{2 \pi i j (k_1-k_2)}{N}}\chi_{C(\beta\nu_0 k_1)}(x) \chi_{C(\beta\nu_0 k_2)}(y)=\\
=&N^{|\beta|}+N^{|\beta|}(N-1)e^{-t}\sum_{l=0}^{k-|\beta|-1}N^{l}\mathrm{e}^{-t(1-N^{-1})l}+\sum_{j=1}^{N-1}N^{|\beta|+|\tilde{\nu}_0|}\mathrm{e}^{-t(1+(1-N^{-1})|\tilde{\nu}_0|)} \mathrm{e}^{\frac{2 \pi i j (p_x-p_y)}{N}}\\
=&N^{|\beta|}+N^{|\beta|}(N-1)e^{-t}\frac{\left(Ne^{-t(1-N^{-1}}\right)^{k-|\beta|}-1}{Ne^{-t(1-N^{-1})}-1}-N^k\mathrm{e}^{-t(1+(1-N^{-1})(k-|\beta|))}\\
{}^{(\ast)}=&N^{|\beta|}\left(1-\frac{(N-1)\mathrm{e}^{-t}}{N\mathrm{e}^{-t(1-N^{-1})}-1}\right)+\left(\frac{N-N\mathrm{e}^{-t(1-N^{-1})}}{N\mathrm{e}^{-t(1-N^{-1})}-1}\mathrm{e}^{-t(1-(1-N^{-1})|\beta|)}\right)\left(N\mathrm{e}^{-t(1-N^{-1})}\right)^k.
\end{align*}
\normalsize
To complete the calculation for $k>|\beta|$ notice that $$\left(N\mathrm{e}^{-t(1-N^{-1})}\right)^k=\mathrm{d}(x,y)^{-\log_\lambda(N)+t(1-N^{-1})\log_\lambda(e)}.$$ Now, if $k=|\beta|$, the second term 
\begin{equation}\label{eq:noterm}
N^{|\beta|}(N-1)e^{-t}\sum_{l=0}^{k-|\beta|-1}N^{l}\mathrm{e}^{-t(1-N^{-1})l}
\end{equation}
in the second equality will not appear and hence we get
\begin{align*}
k_{\beta,t}(x,y)=&N^{|\beta|}(1-e^{-t})\\
{}^{(\ast \ast)}=&N^{|\beta|}-\mathrm{e}^{-t(1-(1-N^{-1})|\beta|)}\mathrm{d}(x,y)^{-\log_\lambda(N)+t(1-N^{-1})\log_\lambda(e)}.
\end{align*}
Observe now that the expression $(\ast \ast)$ can be derived from $(\ast)$ by putting $k=\beta$, even though \eqref{eq:noterm} is not defined. The rest properties of $k_{\beta,t}$ are easy to check.

To now conclude Equation \eqref{etdeltaon}, we note that the integral kernel of $P_{\Delta_\gamma}$ is the constant function $(g_1,g_2)\mapsto N^{|s(\gamma)|}$. Here the constant $N^{|s(\gamma)|}$ arises from the normalization $\|1\|_{L^2(C(s(\gamma)))}^2=\mu(C(s(\gamma)))=N^{-|s(\gamma)|}$. In particular, the computation above shows that for $f\in L^2(G_\gamma,\mu_\gamma)$,
\begin{align*}
\mathrm{e}^{-t\Delta_\gamma}f(g_1)=&~\int_{G_\gamma}\left(h_\gamma(t)+H_\gamma(t)\mathrm{d}_\gamma(g_1,g_2)^{-\delta+\delta't}\right)f(g_2)\mathrm{d}\mu_\gamma(g_2)\\
=&~\left( h_\gamma(t)N^{-|s(\gamma)|}P_{\Delta_\gamma}+H_\gamma(t)R_{\gamma,\delta't}\right)f(g_1).
\end{align*}
This completes the proof.
\end{proof}

As a corollary, we obtain that the one parameter families $t\mapsto H\left(\frac{t}{\delta'}\right)R_{\gamma,t}$ form a semigroup up to rank one operators.

\begin{cor}
For every $\gamma \in I_N$, the family $\{R_{\gamma,s}\}_{s>0}$ of Riesz potentials satisfies the composition rule
\begin{align*}
H_\gamma\left(\frac{s_1}{\delta'}\right)H_\gamma&\left(\frac{s_2}{\delta'}\right)R_{\gamma,s_1}R_{\gamma,s_2}=H_{\gamma}\left(\frac{s_1+s_2}{\delta'}\right)R_{\gamma,s_1+s_2}+\\
&+\left(N^{-|s(\gamma)|}h_{\gamma}(\frac{s_1+s_2}{\delta'})+h_{\gamma}(\frac{s_1}{\delta'})h_{\gamma}(\frac{s_2}{\delta'})N^{-|s(\gamma)|}-h_{\gamma}(\frac{s_1}{\delta'}) -h_{\gamma}(\frac{s_2}{\delta'})\right) P_{\Delta_{\gamma}}.
\end{align*}
\end{cor}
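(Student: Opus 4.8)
The plan is to derive the composition rule directly from Equation \eqref{etdeltaon} of Proposition \ref{lnlknkojnad}, which writes $\mathrm{e}^{-t\Delta_\gamma}$ as an explicit affine combination of the rank-one spectral projection $P_{\Delta_\gamma}$ onto $\ker\Delta_\gamma$ and the Riesz potential $R_{\gamma,\delta' t}$, together with the heat semigroup law $\mathrm{e}^{-t_1\Delta_\gamma}\mathrm{e}^{-t_2\Delta_\gamma}=\mathrm{e}^{-(t_1+t_2)\Delta_\gamma}$. Setting $t_i:=s_i/\delta'$ for $i=1,2$, Equation \eqref{etdeltaon} rearranges to
$$H_\gamma(t_i)R_{\gamma,s_i}=\mathrm{e}^{-t_i\Delta_\gamma}-h_\gamma(t_i)N^{-|s(\gamma)|}P_{\Delta_\gamma},\qquad i=1,2,$$
and I would simply multiply these two identities and expand the product of the right-hand sides.

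The expansion has four summands. Three of them are handled by the elementary relations $P_{\Delta_\gamma}^2=P_{\Delta_\gamma}$ and $\mathrm{e}^{-t\Delta_\gamma}P_{\Delta_\gamma}=P_{\Delta_\gamma}\mathrm{e}^{-t\Delta_\gamma}=P_{\Delta_\gamma}$; the last two equalities hold because $P_{\Delta_\gamma}$ is the spectral projection of $\Delta_\gamma$ at the eigenvalue $0$, so it commutes with $\mathrm{e}^{-t\Delta_\gamma}$ and $\mathrm{e}^{-t\Delta_\gamma}$ restricts to the identity on its range (immediate from the eigenbasis description in Theorem \ref{kljnkjnadadjbnojbn} and Lemma \ref{complemlog}). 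The fourth summand is $\mathrm{e}^{-t_1\Delta_\gamma}\mathrm{e}^{-t_2\Delta_\gamma}=\mathrm{e}^{-(t_1+t_2)\Delta_\gamma}$, which I re-expand once more through \eqref{etdeltaon}, using $\delta'(t_1+t_2)=s_1+s_2$ so that the Riesz potential produced is exactly $R_{\gamma,s_1+s_2}$. Collecting the $R_{\gamma,s_1+s_2}$-term together with the accumulated scalar multiples of $P_{\Delta_\gamma}$, and rewriting $t_i$ and $t_1+t_2$ in terms of $s_1,s_2$, yields the asserted identity.

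There is no substantial obstacle here: the composition rule is essentially forced by the semigroup property once \eqref{etdeltaon} is available, and since the left-hand side of the statement already carries the $H_\gamma$ factors, no division by the (only meromorphic) function $H_\gamma$ is needed. The only points that require care are the bookkeeping of the normalisation constants $N^{\pm|s(\gamma)|}$ — which enter through $\|\chi_{C(s(\gamma))}\|_{L^2}^2=N^{-|s(\gamma)|}$ and the constant kernel $N^{|s(\gamma)|}$ of $P_{\Delta_\gamma}$ — and the observation that all operators in play, namely $\mathrm{e}^{-t\Delta_\gamma}$ for $t>0$, the bounded idempotent $P_{\Delta_\gamma}$, and $R_{\gamma,s}$ for $s>0$ (bounded on $L^2(G_\gamma,\mu_\gamma)$ thanks to \eqref{eq:Ahlfors_int} when $s<\delta$ and to boundedness of its kernel when $s\geq\delta$), are genuine bounded operators, so that all products and the final equality are meaningful. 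To keep the coefficients $h_\gamma$ and $H_\gamma$ finite one restricts to $s_1,s_2$ with $s_1/\delta',\,s_2/\delta',\,(s_1+s_2)/\delta'\neq\delta/\delta'$, the remaining case being covered by the meromorphic continuation noted after Proposition \ref{lnlknkojnad}.
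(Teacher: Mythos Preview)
Your proposal is correct and takes essentially the same approach as the paper: both arguments reduce the claim to Equation \eqref{etdeltaon} together with the semigroup law for $\mathrm{e}^{-t\Delta_\gamma}$ and the spectral identities $P_{\Delta_\gamma}^2=P_{\Delta_\gamma}$, $\mathrm{e}^{-t\Delta_\gamma}P_{\Delta_\gamma}=P_{\Delta_\gamma}$. The paper organises the bookkeeping by first recording $[P_{\Delta_\gamma},R_{\gamma,s}]=0$ and the scalar value of $R_{\gamma,s}P_{\Delta_\gamma}$, whereas you rearrange \eqref{etdeltaon} to isolate $H_\gamma R_{\gamma,s}$ before multiplying; this is a harmless reordering of the same computation.
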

\begin{proof}
For every $\gamma \in I_N$ and $s>0$ it holds that $[P_{\Delta_{\gamma}},R_{\gamma,s}]=0$ and $$R_{\gamma,s}P_{\Delta_{\gamma}}=\frac{1-h_{\gamma}(\frac{s}{\delta'})N^{-|s(\gamma)|}}{H_{\gamma}(\frac{s}{\delta'})}.$$ Then, the semigroup property of $\{e^{-s\Delta_{\gamma}}\}_{s>0}$  and an algebraic manipulation gives us the result.
\end{proof}
By summing the result of Proposition \ref{lnlknkojnad} over all components, using the estimate \eqref{eq:Ahlfors_int}, and taking into account the function $\ell:G_N\to \mathbb N$ in \eqref{eq:scale}, we arrive at the following.

\begin{thm}
\label{heatthm}
For every $t>\frac{\delta}{\delta'}$, the heat operator $\mathrm{e}^{-t(\Delta+M_{\ell})}$ admits a kernel 
$$K_t\in L^1(G_N\times G_N,\mu_{G_N}\times \mu_{G_N})\cap C(G_N\times G_N).$$ 
Specifically, for $g_1\neq g_2\in G_N$, it is given by 
$$K_t(g_1,g_2)=\begin{cases}e^{-t|\gamma|}k_{\gamma,t}(g_1,g_2), & \text{if  }\; g_1,g_2\in G_{\gamma}\\
0, & \text{otherwise.}\end{cases}$$ 
\end{thm}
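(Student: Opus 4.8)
The plan is to assemble Theorem~\ref{heatthm} by summing the componentwise statement of Proposition~\ref{lnlknkojnad} over the clopen decomposition $G_N=\bigsqcup_{\gamma\in I_N}G_\gamma$, and then incorporating the multiplication operator $M_\ell$. Since $\Delta=\bigoplus_\gamma \Delta_\gamma$ and $M_\ell$ acts on $L^2(G_\gamma,\mu_\gamma)$ as multiplication by the scalar $|\gamma|$, the operators $\Delta$ and $M_\ell$ commute and $\mathrm{e}^{-t(\Delta+M_\ell)}=\bigoplus_{\gamma\in I_N}\mathrm{e}^{-t|\gamma|}\mathrm{e}^{-t\Delta_\gamma}$. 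By Proposition~\ref{lnlknkojnad}, for $t>\delta/\delta'$ each $\mathrm{e}^{-t\Delta_\gamma}$ has the continuous, integrable kernel $k_{\gamma,t}$, so the operator $\mathrm{e}^{-t(\Delta+M_\ell)}$ is, at least formally, the integral operator with the block-diagonal kernel $K_t$ displayed in the statement. What remains is to justify that this block sum genuinely defines an $L^1\cap C$ kernel on the full product $G_N\times G_N$.

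\textbf{Key steps.} First I would record the reduction above and note that on each block the kernel $K_t|_{G_\gamma\times G_\gamma}=\mathrm{e}^{-t|\gamma|}k_{\gamma,t}$ is continuous (for $t>\delta/\delta'$) by Proposition~\ref{lnlknkojnad}, and is supported away from the diagonal-free part only within the clopen set $G_\gamma\times G_\gamma$; since the $G_\gamma$ are clopen and pairwise disjoint, the function $K_t$ that is $0$ off $\bigsqcup_\gamma (G_\gamma\times G_\gamma)$ is continuous on all of $G_N\times G_N$. Second, I would establish the $L^1$-bound: using the explicit form $k_{\gamma,t}(g_1,g_2)=h_\gamma(t)+H_\gamma(t)\mathrm{d}_\gamma(g_1,g_2)^{-\delta+\delta't}$ together with $\mu_\gamma(G_\gamma)=\mu(C(s(\gamma)))=N^{-|s(\gamma)|}$ and the Ahlfors-regularity estimate \eqref{eq:Ahlfors_int} applied on $C(s(\gamma))$, one gets
\begin{align*}
\|K_t\|_{L^1(G_\gamma\times G_\gamma)}&\lesssim \mathrm{e}^{-t|\gamma|}\Big(|h_\gamma(t)|\,N^{-2|s(\gamma)|}+|H_\gamma(t)|\,N^{-|s(\gamma)|}\Big),
\end{align*}
and then I would sum over $\gamma=r(\gamma).s(\gamma)\in I_N$. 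Here $|\gamma|=|r(\gamma)|+|s(\gamma)|$ and $|h_\gamma(t)|\lesssim N^{|s(\gamma)|}$, $|H_\gamma(t)|\lesssim \mathrm{e}^{t(1-N^{-1})|s(\gamma)|}$ with constants depending only on $t$; the number of $\gamma\in I_N$ with prescribed word lengths $|r(\gamma)|=p$, $|s(\gamma)|=q$ is $O(N^{p+q})$. Combining, the $\gamma$-sum is dominated by $\sum_{p,q}N^{p+q}\mathrm{e}^{-t(p+q)}\big(N^{-|s(\gamma)|}+\mathrm{e}^{t(1-N^{-1})q}N^{-q}\big)$-type terms, and for $t>\delta/\delta'=\log(N)/(1-N^{-1})$ one checks $N\mathrm{e}^{-t}<1$ and $\mathrm{e}^{-tN^{-1}}\mathrm{e}^{t(1-N^{-1})}\cdot N\mathrm{e}^{-t}<1$ so the geometric series converge; this gives $K_t\in L^1(G_N\times G_N,\mu_{G_N}\times\mu_{G_N})$. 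Finally, I would verify that this $L^1$ kernel represents the bounded operator $\mathrm{e}^{-t(\Delta+M_\ell)}$: on each block this is Proposition~\ref{lnlknkojnad}, and the block decomposition of both the operator (via $L^2(G_N,\mu_{G_N})=\bigoplus_\gamma L^2(G_\gamma,\mu_\gamma)$) and the kernel (via the clopen product decomposition) are compatible, so the identity passes to the direct sum by a standard dominated-convergence/approximation argument on finitely supported functions in $\LC$, which are dense.

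\textbf{Main obstacle.} The only genuine work is the summability estimate in the second step: one must track the $|s(\gamma)|$- and $|r(\gamma)|$-dependence of $h_\gamma(t)$, $H_\gamma(t)$, the measures $\mu_\gamma(G_\gamma)$, the Riesz-integral factor from \eqref{eq:Ahlfors_int}, and the cardinality of the relevant index sets in $I_N$, and then confirm that the precise threshold $t>\delta/\delta'$ is exactly what makes every resulting geometric series converge. Continuity and the operator-identification steps are then essentially formal given the clopen decomposition and Proposition~\ref{lnlknkojnad}.
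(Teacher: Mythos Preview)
Your proposal is correct and follows exactly the approach the paper indicates: the paper's entire argument is the single sentence ``By summing the result of Proposition~\ref{lnlknkojnad} over all components, using the estimate~\eqref{eq:Ahlfors_int}, and taking into account the function $\ell:G_N\to\mathbb{N}$ in~\eqref{eq:scale}, we arrive at the following,'' and you have simply fleshed out those three ingredients (block decomposition via $\ell$, the componentwise kernel from Proposition~\ref{lnlknkojnad}, and the Ahlfors estimate~\eqref{eq:Ahlfors_int} for the $L^1$ summability). Your convergence check that the threshold $t>\delta/\delta'$ forces $N\mathrm{e}^{-t}<1$ is the right point, and the cancellation between the growth of $H_\gamma(t)$ in $|s(\gamma)|$ and the Riesz integral factor $\lambda^{-|s(\gamma)|\delta' t}$ (which you implicitly use when bounding the second term by $|H_\gamma(t)|N^{-|s(\gamma)|}$) is exactly what makes the sum go through.
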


\subsection{The commutator with $O_A$}
We now arrive at the main technical result towards the construction of a spectral triple from the logarithmic Dirichlet Laplacian $\Delta$.
\begin{prop}\label{prop:bounded_com_Lap}
For $i\in \{1,\ldots, N\}$ the commutator $[\Delta, S_i]$ extends to a bounded operator on $L^2(G_A, \mu_{G_A})$. Moreover, $\|[\Delta,S_{i}]\|\leq C$ where $C\geq 1$ is the Ahlfors constant as in \eqref{eq:Ahlfors_reg}. In particular, for any $a\in \LC$, $[\Delta, a]$ extends to a bounded operator on $L^2(G_A, \mu_{G_A})$.
\end{prop}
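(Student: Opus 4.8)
The plan is to use the explicit diagonalization of $\Delta$ from Theorem \ref{kljnkjnadadjbnojbn} together with the description of the action of $S_i$ on the wavelet basis from Theorem \ref{lknlknadasd}. Since $\LC$ is spanned by finite products of the $S_i$, $S_i^*$ and the generators span $O_A$, it suffices to bound $[\Delta, S_i]$; the last sentence then follows from the Leibniz rule $[\Delta, ab] = [\Delta,a]b + a[\Delta,b]$ and the fact that $\LC$ is generated as a $*$-algebra by the $S_i$. So the whole proposition reduces to the norm estimate $\|[\Delta, S_i]\| \le C$.

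To get this, I would test $[\Delta, S_i]$ against the orthonormal basis $(\mathrm{e}_\gamma, \mathrm{e}_{(\gamma,\nu,j)})$. On $\mathrm{e}_{(\gamma,\nu,j)}$, if $i\gamma \notin I_A$ then $S_i \mathrm{e}_{(\gamma,\nu,j)} = 0$, so $[\Delta,S_i]\mathrm{e}_{(\gamma,\nu,j)} = -S_i\Delta \mathrm{e}_{(\gamma,\nu,j)} = -\lambda^A_{s(\gamma)}(\nu) S_i\mathrm{e}_{(\gamma,\nu,j)} = 0$. If $i\gamma \in I_A$ and we are in case (a) or (b) of Theorem \ref{lknlknadasd}, then $S_i \mathrm{e}_{(\gamma,\nu,j)} = \mathrm{e}_{(i\gamma,\nu,j)}$, and the crucial point is that $s(i\gamma) = s(\gamma) = \beta$ and the word $\nu$ is unchanged, so $\Delta \mathrm{e}_{(i\gamma,\nu,j)} = \lambda^A_{s(i\gamma)}(\nu)\mathrm{e}_{(i\gamma,\nu,j)} = \lambda^A_{s(\gamma)}(\nu)\mathrm{e}_{(i\gamma,\nu,j)}$. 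Hence $[\Delta, S_i]\mathrm{e}_{(\gamma,\nu,j)} = (\lambda^A_{s(\gamma)}(\nu) - \lambda^A_{s(\gamma)}(\nu))\mathrm{e}_{(i\gamma,\nu,j)} = 0$ in these cases. On the kernel vectors $\mathrm{e}_\gamma$ one has $\Delta \mathrm{e}_\gamma = 0$ and $S_i\mathrm{e}_\gamma$ is (up to the exceptional case) again a kernel vector $\mathrm{e}_{i\gamma}$ — or a sum of kernel vectors when the third case of Theorem \ref{lknlknadasd} intervenes — so $[\Delta, S_i]\mathrm{e}_\gamma = -\Delta S_i \mathrm{e}_\gamma$, which is controlled by the eigenvalues appearing when $S_i\mathrm{e}_\gamma$ is expanded in the basis.

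The only genuinely nontrivial contribution is the exceptional case (c) of Theorem \ref{lknlknadasd}, namely $r(\gamma) = {\o}$, $|s(\gamma)| > 1$, and $s(\gamma)_{|s(\gamma)|-1} = i$: there $S_i\mathrm{e}_{(\gamma,\nu,j)}$ is not a single basis vector but has a piece living on $\sigma_\gamma(G_\gamma) \subset G_{\widehat\gamma}$, where $\widehat\gamma = {\o}.\widehat{s(\gamma)}$, and the function $h_{\nu,j}\circ s$ restricted there is \emph{not} one of the wavelet eigenfunctions adapted to the cylinder $C(\widehat{s(\gamma)})$ — it is supported in a strictly smaller cylinder $C(s(\gamma)) = C(\widehat{s(\gamma)}\, i)$. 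So I would compute $\Delta$ applied to this piece directly from the integral representation \eqref{eq:Int_rep}, exactly as in the proof of Lemma \ref{complemlog}: for a function of the form $h_{\nu,j}$ (mean zero, supported in $C(\nu)$, constant on finer cylinders) now viewed inside the larger space $L^2(C(\widehat{s(\gamma)}), \mu)$, the Laplacian acts by $\Delta_{C(\widehat\beta)}(h_{\nu,j})(x) = (\text{stuff} + \int_{C(\widehat\beta)\setminus C(\nu)} \mathrm{d}(x,y)^{-\delta}\d\mu(y)) h_{\nu,j}(x)$ on $C(\nu)$ and $0$ off it — where now the annulus of integration has one extra term compared to the computation inside $C(\beta)=C(s(\gamma))$. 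This extra term is precisely $\int_{C(\widehat\beta)\setminus C(\beta)} \mathrm{d}(x,y)^{-\delta}\d\mu(y)$, which by the Ahlfors estimate \eqref{eq:Ahlfors_int} (with $s = \delta$, i.e. $\simeq r^0$, a constant independent of everything) is bounded by the Ahlfors constant $C$. The commutator $[\Delta, S_i]\mathrm{e}_{(\gamma,\nu,j)}$ then equals this extra term times the $\sigma_\gamma(G_\gamma)$-supported piece of $S_i\mathrm{e}_{(\gamma,\nu,j)}$, a vector of norm $\le 1$, and one checks these contributions are mutually orthogonal as $(\gamma,\nu,j)$ varies over the exceptional set (the pairs $(\widehat\gamma, \nu, j)$ are distinct, and the $\sigma_\gamma(G_\gamma)$ pieces sit orthogonally inside $L^2(G_{\widehat\gamma})$ by the wavelet orthogonality). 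Therefore $\|[\Delta,S_i]\| \le C$.

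The main obstacle is bookkeeping in the exceptional case (c): one must carefully verify that the "defect" vector $\chi_{\sigma_\gamma(G_\gamma)}\cdot(h_{\nu,j}\circ s)$ really is an eigenfunction-like object on which $\Delta$ acts by a scalar, identify that scalar as $\lambda^A_{s(\gamma)}(\nu)$ plus exactly one extra annular integral, bound that integral by $C$ via \eqref{eq:Ahlfors_int}, and confirm the orthogonality of the various defect vectors so that the operator norm is the supremum (not a sum) of these scalars. Everything else is the algebraic identity $[\Delta, S_i] = \Delta S_i - S_i\Delta$ vanishing on the generic basis vectors because $S_i$ intertwines eigenfunctions with the \emph{same} eigenvalue — the source word $s(\gamma)$ and the wavelet label $\nu$ being preserved by the action of $S_i$ in the generic cases is the structural reason the commutator is bounded at all, and this is what makes the logarithmic Laplacian (as opposed to the non-localized version of \cite{GMON}, cf.\ Remark \ref{lknlknlknad}) behave well.
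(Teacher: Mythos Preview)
Your approach is correct and takes a genuinely different route from the paper's. The paper works directly with the integral representation \eqref{eq:Int_rep} and the block decomposition $P_\gamma[\Delta,S_i]P_{\gamma'}$ from Proposition \ref{descmatrdecomp}: a change of variables using the shift-invariance of the measure (Proposition \ref{prop:shift_inv_measure}) shows that the entry with $\gamma=i\gamma'$ vanishes identically, and the only surviving entries $P_{{\o}.\hat\beta}[\Delta,S_i]P_{{\o}.\beta}$ are seen to be ``shift then multiply by a constant'' operators, the constant being the annular integral $\int_{C(\hat\beta)\setminus C(\beta)}\mathrm{d}(x,y)^{-\delta}\,\mathrm{d}\mu(y)\le C$. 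Your spectral route arrives at precisely the same annular integral, now as the eigenvalue difference $\lambda^A_{\hat\beta}(\nu)-\lambda^A_{\beta}(\nu)$, and has the conceptual advantage of making the generic vanishing transparent: $S_i$ intertwines eigenfunctions with the \emph{same} eigenvalue because $s(i\gamma)=s(\gamma)$. The paper's approach, on the other hand, avoids the eigenbasis entirely and would transfer more readily to settings without an explicit diagonalization.

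Two places in your argument need tightening. First, the defect of the kernel vector $\mathrm{e}_\gamma$ in case (c) is $h_\beta$ viewed in $L^2(C(\hat\beta))$, which is \emph{not} a kernel vector for $\Delta_{\hat\gamma}$; rather $h_\beta\in L^2_0\oplus L^2_1(C(\hat\beta))$, so $\Delta_{C(\hat\beta)}h_\beta=(1-P_{\ker})h_\beta$ and $\|[\Delta,S_i]\mathrm{e}_\gamma\|\le 1$. Second, your orthogonality claim is slightly off: distinct $\gamma$'s in case (c) can share the same $\hat\gamma$ (the map $\beta\mapsto\hat\beta$ forgets the last letter), so ``the pairs $(\hat\gamma,\nu,j)$ are distinct'' fails. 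Orthogonality of the $(\gamma,\nu,j)$-defects instead comes from the disjoint supports $C(\beta)\subset C(\hat\beta)$. The kernel-vector defects $(1-P_{\ker})h_\beta$ for different $\beta$ with the same $\hat\beta$ are \emph{not} mutually orthogonal, but the map $\mathrm{e}_\gamma\mapsto(1-P_{\ker})h_\beta$ factors as an isometry followed by a projection, hence has norm $\le 1$; its range lies in $L^2_1(C(\hat\beta))$, orthogonal to the remaining defects which sit in $L^2_{\ge 2}$. Finally, the appeal to \eqref{eq:Ahlfors_int} ``with $s=\delta$'' is not quite right --- one uses \eqref{eq:Ahlfors_reg} directly on the annulus $C(\hat\beta)\setminus C(\beta)$, where $\mathrm{d}(x,y)=\lambda^{-|\hat\beta|}$ is constant. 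With these fixes the bound $\|[\Delta,S_i]\|\le C$ goes through.
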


\begin{proof}
The commutator $[\Delta,S_i]$ is a matrix operator with entries 
$$P_{\gamma}[\Delta,S_i]P_{\gamma'}=\Delta_{\gamma}P_{\gamma}S_iP_{\gamma'}-P_{\gamma}S_iP_{\gamma'}\Delta_{\gamma'},$$ 
since $\Delta$ is diagonal in the decomposition of $L^2(G_A,\mu_{G_A})$. We claim that each entry extends to a bounded operator and the operator norms of those entries are uniformly bounded. Also, from Proposition \ref{descmatrdecomp} we have that every column of $[\Delta,S_i]$ has at most two non-zero entries and every row at most $N$ non-zero entries. Therefore, from Schur's test $[\Delta,S_i]$ is bounded.  Moreover, in our calculations it suffices to work with locally constant functions, due to the self-adjointness of each $\Delta$ and since convoluting with $S_i$ preserves locally constant functions. 

It follows from Proposition \ref{descmatrdecomp} that $P_{\gamma}[\Delta,S_i]P_{\gamma'}=0$ unless we are in one of the two cases 
\begin{enumerate}
\item $s(\gamma)=s(\gamma')$ and $r(\gamma)=ir(\gamma')$, or 
\item $r(\gamma)=r(\gamma')={\o}$ and $s(\gamma)=\widehat{s(\gamma')}$ and $s(\gamma')_{|s(\gamma')|-1}=i$.
\end{enumerate}

We first show that case (1), that is, $\gamma=i\gamma'\in I_A$, contributes nothing to the commutator. Indeed, for $f\in \LC$ and $(x,n,y)\in G_A\setminus G_{i\gamma'}$ it holds that $P_{i\gamma'}[\Delta,S_i]P_{\gamma'}f(x,n,y)=0$, while for $(x,n,y)\in G_{i\gamma'}$ we can write
\begin{align*}
P_{i\gamma'} S_iP_{\gamma'}\Delta_{\gamma'}f(x,n,y)&= S_i\Delta_{\gamma'}f(x,n,y)\\
&= \Delta_{\gamma'}f(\sigma_A(x),n-1,y)\\
&= \int_{G_{\gamma'}} \frac{f(\sigma_A(x),n-1,y)-f(z,m,w)}{\mathrm{d}_{\gamma'}((\sigma_A(x),n-1,y), (z,m,w))^{\delta}} \d \mu_{\gamma'}(z,m,w)\\
&=\int_{G_{\gamma'}} \frac{f(\sigma_A(x),n-1,y)-f(z,n-1,w)}{\mathrm{d}(y,w)^{\delta}} \d \mu_{\gamma'}(z,n-1,w)
\end{align*}
and 
\begin{align*}
\Delta_{i\gamma'}P_{i\gamma'} S_iP_{\gamma'}f(x,n,y)&=\Delta_{i\gamma'}S_iP_{\gamma'}f(x,n,y)\\
&= \int_{G_{i\gamma'}}\frac{S_i P_{\gamma'}f(x,n,y)-S_i P_{\gamma'}f(z,m,w)}{\mathrm{d}_{i\gamma'}((x,n,y),(z,m,w))^{\delta}}\d \mu_{i\gamma'}(z,m,w)\\
&= \int_{G_{i\gamma'}}\frac{f(\sigma_A(x),n-1,y)-(f\circ \sigma_{i\gamma'})(z,n,w)}{\mathrm{d}_{i\gamma'}((x,n,y),(z,n,w))^{\delta}}\d \mu_{i\gamma'}(z,n,w)\\
&= \int_{G_{\gamma'}}\frac{f(\sigma_A(x),n-1,y)-f(z',n',w')}{\mathrm{d}_{i\gamma'}((x,n,y), (iz', n'+1, w'))^{\delta}}\d \mu_{\gamma'}(z', n', w')\\
&=\int_{G_{\gamma'}}\frac{f(\sigma_A(x),n-1,y)-f(z,n-1,w)}{\mathrm{d}(y,w)^{\delta}}\d \mu_{\gamma'}(z,n-1,w).
\end{align*}
The fourth equality comes from changing $(z,n,w)\mapsto (\sigma_{i\gamma'})^{-1}(z',n',w') = (iz', n'+1, w')$ and Proposition \ref{prop:shift_inv_measure}. We conclude that 
$$\Delta_{i\gamma'}P_{i\gamma'}S_iP_{\gamma'}=P_{i\gamma'}S_iP_{\gamma'}\Delta_{\gamma'},$$
and that case (1) contributes nothing to the commutator.

We claim that the non-zero entries in the commutator $[\Delta,S_i]$ comes from case (2). In other words, $[\Delta,S_i]$ is supported on the entries $P_{{\o}.\hat{\beta}}[\Delta,S_i]P_{{\o}.\beta}$ for $\beta$ of length $k+1\geq 2$ and $\beta_k=i$, with norms uniformly bounded in $k$. 

Take an $f\in \LC$. Observe that $P_{{\o}.\hat{\beta}}[\Delta,S_i]P_{{\o}.\beta}f$ vanishes on $G_A\setminus G_{{\o}.\hat{\beta}}$. Let $(\sigma_A^{k-1}(x),-k+1,x)\in G_{{\o}.\hat{\beta}}$ be arbitrary. Then, we have that 
\begin{align*}
&\Delta_{{\o}.\hat{\beta}}P_{{\o}.\hat{\beta}}S_iP_{{\o}.\beta}f(\sigma_A^{k-1}(x),-k+1,x)\\
&=\int_{G_{{\o}.\hat{\beta}}}\frac{S_i P_{{\o}.\beta}f(\sigma_A^{k-1}(x),-k+1,x)-S_i P_{{\o}.\beta}f(\sigma_A^{k-1}(y),-k+1,y)}{\mathrm{d}(x,y)^{\delta}}\d \mu_{{\o}.\hat{\beta}}(\sigma_A^{k-1}(y),-k+1,y)\\
&=\int_{G_{{\o}.\hat{\beta}}}\frac{P_{{\o}.\beta}f(\sigma_A^k(x),-k,x)-P_{{\o}.\beta}f(\sigma_A^k(y),-k,y)}{\mathrm{d}(x,y)^{\delta}}\d \mu_{{\o}.\hat{\beta}}(\sigma_A^{k-1}(y),-k+1,y)\\
\end{align*}
and 
\begin{align*}
&P_{{\o}.\hat{\beta}}S_iP_{{\o}.\beta}\Delta_{{\o}.\beta}f(\sigma_A^{k-1}(x),-k+1,x)\\
&=\Delta_{{\o}.\beta}f(\sigma_A^{k}(x),-k,x)\\
&=\int_{G_{{\o}.\beta}}\frac{P_{{\o}.\beta}f(\sigma_A^k(x),-k,x)-P_{{\o}.\beta}f(\sigma_A^k(y),-k,y)}{\mathrm{d}(x,y)^{\delta}}\d \mu_{{\o}.\beta}(\sigma_A^k(y),-k,y)\\
&=\int_{\sigma_{{\o}.\beta}(G_{{\o}.\beta})}\frac{P_{{\o}.\beta}f(\sigma_A^k(x),-k,x)-P_{{\o}.\beta}f(\sigma_A^k(z),-k,z)}{\mathrm{d}(x,z)^{\delta}}\d \mu_{{\o}.\hat{\beta}}(\sigma_A^{k-1}(z),-k+1,z),
\end{align*}
where the last equality comes from 
$$(\sigma_A^k(y),-k,y)\mapsto (\sigma_{{\o}.\beta})^{-1}((\sigma_A^{k-1}(z),-k+1,z))=(\sigma_A^{k}(z),-k,z).$$
Therefore, our computation is concluded from noting that
\begin{align*}
P_{{\o}.\hat{\beta}}[\Delta,S_i]P_{{\o}.\beta}&f(\sigma_A^{k-1}(x),-k+1,x)=\\
=P_{{\o}.\beta}&f(\sigma_A^k(x),-k,x)\int_{G_{{\o}.\hat{\beta}}\setminus \sigma_{{\o}.\beta}(G_{{\o}.\beta})}\frac{1}{\mathrm{d}(x,y)^{\delta}}\d \mu_{{\o}.\hat{\beta}}(\sigma_A^{k-1}(y),-k+1,y).
\end{align*}
However, 
 $$\int_{G_{{\o}.\hat{\beta}}\setminus \sigma_{{\o}.\beta}(G_{{\o}.\beta})}\frac{1}{\mathrm{d}(x,y)^{\delta}}\d \mu_{{\o}.\hat{\beta}}(\sigma_A^{k-1}(y),-k+1,y)\leq C,$$ and so 
$$\|P_{{\o}.\hat{\beta}}[\Delta,S_i]P_{{\o}.\beta}\|\leq C.$$ 
The proof is now complete.
\end{proof}

\begin{remark}
For full $N$-shift we have $C=1$, hence $\|[\Delta,S_i]\|\leq 1.$
\end{remark}

\begin{remark}\label{rem:Unbounded_com}
We now describe why we have altered the constructions of \cite{GMON} as discussed in Remarks \ref{blablaon} and \ref{lknlknlknad}. In \cite{GMON}, $G_A$ was decomposed into $G_{\alpha,k}$ that here is
$$G_{\alpha,k}=\bigsqcup_{|\beta|=k+1}G_{\alpha.\beta},$$ with $\beta$ being such that $\alpha . \beta \in I_A$. This decomposition and the associated distance functions produce a logarithmic Dirichlet Laplacian whose commutator with each $S_i$ {\bf does not} extend to a bounded operator, in contrast to the statement of \cite{GMON}. Let us prove this statement. For this reason consider the decomposition $$L^2(G_A,\mu_{G_A})=\bigoplus_{\alpha,k}L^2(G_{\alpha,k},\mu_{\alpha,k})$$ with the associated projections $P_{\alpha,k}$. Define the positive, self-adjoint operator $\Delta'$ acting on $L^2(G_A,\mu_A)$ as $$\Delta'=\bigoplus_{\alpha,k}\Delta_{\alpha,k},$$ where each $\Delta_{\alpha,k}$ is the logarithmic Dirichlet Laplacian on $L^2(G_{\alpha,k},\mu_{\alpha,k})$. 

It can be shown that the number of non-zero entries of each column and row of the matrix operator $[\Delta',S_i]$ is uniformly bounded, and each non-zero entry extends to a bounded operator. However, the norms of those non-zero entries will not be uniformly bounded, meaning that $[\Delta',S_i]$ cannot extend to a bounded operator on $L^2(G_A,\mu_{G_A})$. Specifically, let $k\geq 1$, $f\in \LC$ and we focus on the entry $P_{{\o},k-1}[\Delta',S_i]P_{{\o},k}$ which vanishes on $G_A\setminus G_{{\o},k-1}$. 

Let $(\sigma_A^{k-1}(x),-k+1,x)\in G_{{\o},k-1}$ be arbitrary. If $\sigma_A^{k-1}(x)\in C(i)$, changing the variables with the homeomorphism $G_{{\o},k-1}\to G_{{\o},k}$ given by $(\sigma_A^{k-1}(x),-k+1,x)\mapsto (\sigma_A^{k}(x),-k,x)$, gives that $$P_{{\o},k-1}[\Delta',S_i]P_{{\o},k}f(\sigma_A^{k-1}(x),-k+1,x)=0.$$ However, if $\sigma_A^{k-1}(x)\not\in C(i)$, then 
\begin{align*}
&P_{{\o},k-1}[\Delta',S_i]P_{{\o},k}f(\sigma_A^{k-1}(x),-k+1,x)\\
&=-\int_{G_{{\o},k-1}}\frac{S_iP_{{\o},k}f(\sigma_A^{k-1}(y),-k+1,y)}{\d (x,y)^{\delta}}\d \mu_{{\o},k-1}(\sigma_A^{k-1}(y),-k+1,y)\\
&=-\int_{G_{{\o},k-1},\,\,y\in \sigma_A^{-k+1}(C(i))}\frac{P_{{\o},k}f(\sigma_A^{k}(y),-k,y)}{\d (x,y)^{\delta}}\d \mu_{{\o},k-1}(\sigma_A^{k-1}(y),-k+1,y).
\end{align*}
In particular, if $f$ is the characteristic function of $G_{{\o},k}$, the latter integral has absolute value $$\int_{\sigma_A^{-k+1}(C(i))}\frac{1}{\d (x,y)^{\delta}}\d \mu(y)\gtrsim k.$$ In that case, $$\|P_{{\o},k-1}[\Delta',S_i]P_{{\o},k}f\|_{L^2}\gtrsim k (1-\mu(C(i)))^{1/2}$$ and as $\|f\|_{L^2}=1$ since $\mu_{{\o},k}(G_{{\o},k})=1$, we have that $$\|P_{{\o},k-1}[\Delta',S_i]P_{{\o},k}\|\gtrsim k (1-\mu(C(i)))^{1/2}.$$
We conclude that $[\Delta',S_i]$ is not bounded. 
\end{remark}

\section{The noncommutative geometry of $G_A$ as a metric-measure groupoid}\label{sec:NCG of G_A}
We now assemble the spectral triples from $\Delta$ and describe their $K$-homology classes following ideas of \cite{GMCK}. We show that the approach taken in \cite{GMON} for the algebras $O_{N}$ applies to all Cuntz--Krieger algebras $O_{A}$, thus obtaining a new class of spectral triples for these.
\subsection{The Fock space}

We introduce the space of finite words with a distinguished ending as 
$$\tilde{V}_A:=\{\alpha.\beta\in V_A\times \{1,\ldots,N\}: \alpha\beta\in V_A\}=\{\gamma\in I_A: |s(\gamma)|=1\},$$
and write $\ell^2(\tilde{V}_A)$ for the associated Hilbert space.
Furthermore, consider the Hilbert space 
$$F_A:=\bigoplus_{\gamma\in \tilde{V}_A}\ker \Delta_{\gamma}\subset L^2(G_A,\mu_{G_A}),$$ 
and the associated projection $P_A$ from $L^2(G_A,\mu_{G_A})$ onto $F_A$. We note that for $\gamma\in I_A$,
$$\ker \Delta_{\gamma}=\mathbb C \chi_{G_{\gamma}}.$$ 
Here $\chi_{G_{\gamma}}$ denotes the characteristic function of $G_{\gamma}\subset G_A$. It is therefore useful to recall that up to renormalization we find the first element in the wavelet basis for each $L^2(G_\gamma,\mu_{\gamma}),$ 
$$\mathrm{e}_{\gamma}=\mu (G_{\gamma})^{-1/2}\chi_{G_{\gamma}}.$$ 
By construction, $(\mathrm{e}_{\gamma})_{\gamma\in \tilde{V}_A}$ forms an orthonormal basis for $F_A$. In particular, $P_AP_\gamma=P_\gamma P_A$ is the rank one projection onto $\mathrm{e}_\gamma$:
$$P_AP_{\gamma}f=\langle P_{\gamma}f, \mathrm{e}_\gamma\rangle \mathrm{e}_\gamma=\langle f, \mathrm{e}_\gamma\rangle \mathrm{e}_\gamma.$$
The map $\delta_{\gamma}\mapsto \mathrm{e}_\gamma$ extends to a unitary isomorphism $v:\ell^2(\tilde{V}_A)\to F_{A}$ and $vv^{*}=P_{A}$.

\begin{prop}
\label{lknljnada}
For $a\in\LC$ the operators $(1-P_{A})aP_{A}$ are of finite rank.
\end{prop}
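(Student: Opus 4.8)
The plan is to reduce the statement to the generators of $O_A$ by a soft algebraic argument, and then to compute directly how these generators and their adjoints move the distinguished orthonormal basis $(\mathrm{e}_\gamma)_{\gamma\in\tilde V_A}$ of $F_A$.

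First I would record that the set $\mathcal F$ of operators $a\in B(L^2(G_A,\mu_{G_A}))$ with $(1-P_A)aP_A$ of finite rank is a subalgebra. It is visibly a linear subspace, and inserting $1=P_A+(1-P_A)$ between the two factors yields
\[
(1-P_A)ab\,P_A=\big[(1-P_A)aP_A\big]bP_A+\big[(1-P_A)a(1-P_A)\big]\big[(1-P_A)bP_A\big],
\]
a sum of a finite-rank operator composed with a bounded one and a bounded operator composed with a finite-rank one. Since $\LC$ is generated as an algebra by $S_1,\dots,S_N$ together with $S_1^{*},\dots,S_N^{*}$ (it is the linear span of the monomials $S_\mu S_\nu^{*}=\chi_{C(\mu,\nu)}$, $\mu,\nu\in V_A$), it then suffices to verify $S_i\in\mathcal F$ and $S_i^{*}\in\mathcal F$ for each $i$.

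For $S_i$, I would fix $\gamma=\alpha.\beta\in\tilde V_A$, so $|\beta|=1$ and $\mathrm{e}_\gamma$ is a scalar multiple of $\chi_{G_\gamma}$, and combine the explicit left action of $S_i$ with Lemma~\ref{lem:inverse_images}(1), which applies precisely because $|s(\gamma)|=1$: the set of $(x,n,y)$ with $(\sigma_A(x),n-1,y)\in G_\gamma$ equals $\bigsqcup_{j:\,j\gamma\in I_A}G_{j\gamma}$, and imposing $x\in C(i)$ retains only $j=i$. Hence $S_i\chi_{G_\gamma}=\chi_{G_{i\gamma}}$ if $i\gamma\in I_A$ and $S_i\chi_{G_\gamma}=0$ otherwise; since $i\gamma=(i\alpha).\beta$ again lies in $\tilde V_A$, this gives $S_i\mathrm{e}_\gamma\in F_A$. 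Thus $(1-P_A)S_iP_A=0$, so $S_i\in\mathcal F$.

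For $S_i^{*}$ I would use that it is the characteristic function of $\{(\sigma_A(x),-1,x):x\in C(i)\}=\bigsqcup_{l:\,A_{il}=1}G_{{\o}.(il)}$, convolve it with $\chi_{G_\gamma}$ for $\gamma=\alpha.\beta\in\tilde V_A$, and run the partial multiplication maps of Lemma~\ref{prop:source maps}. The outcome of the (short but fiddly) case analysis is: $S_i^{*}\mathrm{e}_\gamma=0$ unless $\alpha={\o}$ or $\alpha_1=i$; if $\alpha=i\widehat\alpha$ with $\widehat\alpha\neq{\o}$ then $S_i^{*}\mathrm{e}_\gamma$ is proportional to $\chi_{G_{\widehat\alpha.\beta}}$, hence to $\mathrm{e}_{\widehat\alpha.\beta}\in F_A$; if $\alpha=i$ then $S_i^{*}\mathrm{e}_\gamma$ is proportional to $\chi_{G_{{\o}.\beta}}$, hence to $\mathrm{e}_{{\o}.\beta}\in F_A$; and if $\alpha={\o}$ then $S_i^{*}\mathrm{e}_\gamma=0$ unless $\beta=i$, in which case $S_i^{*}\mathrm{e}_{{\o}.i}$ is proportional to the fixed vector $S_i^{*}$, which is supported on the $G_{{\o}.(il)}$ and hence orthogonal to $F_A$. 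Consequently $(1-P_A)S_i^{*}\mathrm{e}_\gamma=0$ for every $\gamma\in\tilde V_A$ except $\gamma={\o}.i$, so $(1-P_A)S_i^{*}P_A$ has rank at most one and $S_i^{*}\in\mathcal F$. Together with the previous step this gives $\LC\subseteq\mathcal F$, which is the assertion.

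The only genuinely delicate part is the bookkeeping in the last step: tracking how $S_i^{*}$ shortens the left word $r(\gamma)$ of an element $\gamma\in\tilde V_A$, and confirming that $S_i^{*}$ returns all but the single basis vector $\mathrm{e}_{{\o}.i}$ of $F_A$ to $F_A$. Everything else is formal.
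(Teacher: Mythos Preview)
Your argument is correct and follows the same strategy as the paper: reduce to the generators $S_i,S_i^*$ via the observation that the set of $a$ with $(1-P_A)aP_A$ finite rank is a subalgebra, then verify $S_iF_A\subseteq F_A$ and that $S_i^*$ maps all but finitely many basis vectors of $F_A$ back into $F_A$. Your version is in fact more explicit than the paper's terse proof and sharpens the conclusion to $\operatorname{rank}\big((1-P_A)S_i^*P_A\big)\le 1$.
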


\begin{proof}
We have 
\[S_i e_\gamma=\left\{\begin{matrix}e_{i\gamma} &\textnormal{if   }&i\gamma\in \tilde{V}_A\\
0&\textnormal{if   } &i\gamma\notin \tilde{V}_A,\end{matrix}\right.\]
Now $P_AS_i^{*} e_{j\gamma}=\delta_{ij}e_{\gamma}$ and $S_{i}^{*}P_A-P_AS_{i}^{*}P_A$ is of finite rank. Since $\LC$ is generated by $\{S_{i}:i=1,\cdots, N\}$ as a $*$-algebra, the statement follows.
\end{proof}

Also, denote by $\rho_A$ the left regular representation of $O_A$ on $L^2(G_A,\mu_{G_A})$. 

\begin{prop}
The triple $(L^2(G_A,\mu_A),\rho_A,2P_A-1)$ is an odd Fredholm module over $O_A$.
\end{prop}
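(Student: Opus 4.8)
The plan is to verify the three defining properties of an odd Fredholm module over $O_A$ for the triple $(L^2(G_A,\mu_A),\rho_A, F)$ with $F:=2P_A-1$: namely that $F=F^*$, that $F^2-1$ is compact, and that $[F,\rho_A(a)]$ is compact for every $a$ in a dense subalgebra, here $\LC$. The first two are immediate: $P_A$ is an orthogonal projection, so $F^*=F$ and $F^2=(2P_A-1)^2=1$, whence $F^2-1=0$ is trivially compact. So the entire content is the commutator condition, and it suffices to check it on the generators $S_i$ since $\LC$ is generated by $\{S_i : i=1,\dots,N\}$ as a $*$-algebra and the compact operators form a closed two-sided ideal.

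For the commutator with a generator, I would compute
\[
[F,\rho_A(S_i)] = 2[P_A,\rho_A(S_i)] = 2\bigl(P_A S_i - S_i P_A\bigr)
= 2\bigl(P_A S_i (1-P_A) - (1-P_A) S_i P_A\bigr),
\]
using $P_A S_i P_A - P_A S_i P_A = 0$. The second summand $(1-P_A)S_i P_A$ is of finite rank by Proposition \ref{lknljnada} applied to $a=S_i\in\LC$. For the first summand, note that $P_A S_i (1-P_A) = \bigl((1-P_A) S_i^* P_A\bigr)^*$, which is the adjoint of a finite-rank operator by Proposition \ref{lknljnada} applied to $a=S_i^*\in\LC$, hence finite rank as well. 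Therefore $[F,\rho_A(S_i)]$ is of finite rank, in particular compact. Taking $*$-algebra combinations, $[F,\rho_A(a)]$ is compact for all $a\in\LC$, and since $\LC$ is dense in $O_A$ this establishes that $(L^2(G_A,\mu_A),\rho_A,F)$ is an odd Fredholm module.

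I do not anticipate a genuine obstacle here: all the analytic work has been front-loaded into Proposition \ref{lknljnada}, whose proof rests on the explicit description of the $S_i$-action on the orthonormal basis $(\mathrm{e}_\gamma)_{\gamma\in\tilde V_A}$ of $F_A$ and the observation that $S_i^*P_A - P_A S_i^* P_A$ is finite rank. The only point requiring a modicum of care is the bookkeeping that $P_A$ genuinely commutes with nothing problematic — i.e.\ that both the off-diagonal corners $(1-P_A)S_i P_A$ and $P_A S_i(1-P_A)$ are finite rank, which as above follows by applying Proposition \ref{lknljnada} once to $S_i$ and once to $S_i^*$ and then taking adjoints. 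One should also remark that the module is \emph{odd} precisely because the grading operator $F$ squares to the identity and no $\mathbb{Z}/2$-grading of the Hilbert space is present; this is the standard convention for odd Fredholm modules / odd $K$-homology classes, and the class $[2P_A-1]\in K^1(O_A)$ it defines is the one to be analyzed in the sequel.
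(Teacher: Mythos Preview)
Your proof is correct and follows essentially the same route as the paper: the paper writes the identity $[P_A,a]=-(1-P_A)aP_A+((1-P_A)a^*P_A)^*$ for general $a\in\LC$ and invokes Proposition \ref{lknljnada} directly, which is precisely your off-diagonal corner decomposition together with the adjoint trick. Your detour through the generators $S_i$ and the Leibniz rule is unnecessary since Proposition \ref{lknljnada} is already stated for arbitrary $a\in\LC$, but it is not wrong.
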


\begin{proof}Since $C_{c}^{\infty}(G_{A})$ is a $*$-algebra and 
$$[P_A,a]=-(1-P_{A})aP_{A}+((1-P_{A})a^*P_{A})^*,$$
this follows from Proposition \ref{lknljnada}.
\end{proof}

We note that the Fredholm module $(L^2(G_A,\mu_A),\rho_A,2P_A-1)$ is finitely summable, even with finite rank commutators against $\LC$, by the proof of Proposition \ref{lknljnada}. This stands in sharp contrast to the lack of finitely summable spectral triples on $O_A$ as discussed in \cite{GMCK}. It is simple to check the following using Kaminker-Putnam's $KK$-duality \cite{KPDual}, with the same argument as in \cite{DGMW,GMCK}.

\begin{prop}
The class $[L^2(G_A,\mu_A),\rho_A,2P_A-1]\in K^1(O_A)$ is the image of the unit $[1]\in K_0(O_{A^T})$ under the $KK$-duality isomorphism $K_0(O_{A^T})\to K^1(O_A)$.
\end{prop}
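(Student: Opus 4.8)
The plan is to identify the Fredholm module $(L^2(G_A,\mu_A),\rho_A,2P_A-1)$ explicitly with a known generator of $K^1(O_A)$ via the Kaminker--Putnam duality pairing, following the template already used in \cite{GMCK,DGMW}. Recall that Kaminker--Putnam duality \cite{KPDual} provides a $KK$-class in $KK^1(O_A\otimes O_{A^T},\mathbb{C})$, the \emph{duality class}, whose slant product with $[1]\in K_0(O_{A^T})\cong KK^0(\mathbb{C},O_{A^T})$ yields an isomorphism $K_0(O_{A^T})\xrightarrow{\sim} K^1(O_A)$. To prove the statement it suffices to show that the image of $[1]$ under this map is represented by our Fredholm module.

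First I would recall that the underlying Hilbert space $F_A$, with its orthonormal basis $(\mathrm e_\gamma)_{\gamma\in\tilde V_A}$ and the computation $S_i\mathrm e_\gamma=\mathrm e_{i\gamma}$ if $i\gamma\in\tilde V_A$ and $0$ otherwise (established in the proof of Proposition \ref{lknljnada}), is precisely the Fock space of the Cuntz--Krieger algebra: the Hilbert space $\ell^2(\tilde V_A)$ carries the canonical Fock representation of $O_A$ in which $P_A=vv^*$ is the projection onto the span of admissible words and $1-P_A$ is finite-dimensional on each $\LC$-invariant piece. Thus $(L^2(G_A,\mu_A),\rho_A,2P_A-1)$ is unitarily equivalent, as a Fredholm module over $O_A$, to the standard Fock-space Fredholm module whose grading operator is $2P_A-1$ with $P_A$ the Fock vacuum-shift projection. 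This is the same Fredholm module that appears as the building block of the $K$-homology computations in \cite{GMCK,DGMW}, and it is there identified with the image of the unit under Kaminker--Putnam duality; I would cite that computation directly, noting that the argument is insensitive to the replacement of the earlier metric of \cite{GMON} by the present one, since $F_A=\bigoplus_{\gamma\in\tilde V_A}\ker\Delta_\gamma$ and its $O_A$-module structure depend only on the decomposition \eqref{eq:G_decomp}, not on the metric.

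The key steps, in order, are: (i) unwind the definitions so that $(L^2(G_A,\mu_A),\rho_A,2P_A-1)$ is exhibited as the Fock Fredholm module of $O_A$ via the unitary $v:\ell^2(\tilde V_A)\to F_A$; (ii) recall from \cite{KPDual} the explicit form of the duality class and the slant-product formula computing $K_0(O_{A^T})\to K^1(O_A)$; (iii) invoke the computation in \cite{DGMW,GMCK} that the Fock Fredholm module equals the image of $[1]\in K_0(O_{A^T})$, which proceeds by pairing both classes against the generators of $K_1(O_A)$ (the classes of the unitaries built from the $S_i$) and checking agreement of the resulting integers, or equivalently by comparing with the Cuntz--Toeplitz extension $0\to\mathbb K\to\mathcal T_A\to O_A\to 0$ whose Busby invariant represents exactly this module; (iv) conclude equality in $K^1(O_A)$.

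The main obstacle is step (iii): one must be careful that the normalization conventions for the Kaminker--Putnam duality isomorphism used here match those under which the Fock module is the image of the unit — in particular the choice of orientation/sign and the identification $K^1(O_A)\cong \mathrm{coker}(1-A^T)$ versus $\ker(1-A^T)$. I would resolve this by reducing to the extension-theoretic description: the Fock Fredholm module is the class of the semisplit extension $\mathcal T_A$, and \cite{KPDual,DGMW} show this extension class corresponds under the duality isomorphism precisely to the class of the identity projection in $O_{A^T}$; this pins down the sign and makes the identification canonical. The remainder is then a routine, and already-documented, bookkeeping argument.
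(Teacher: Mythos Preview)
Your proposal is correct and is precisely the approach the paper takes: the paper gives no proof beyond the sentence that the statement is simple to check using Kaminker--Putnam's $KK$-duality with the same argument as in \cite{DGMW,GMCK}, and your outline is a fleshed-out version of exactly that citation chain. One cosmetic point: in step (i) the module on $L^2(G_A,\mu_A)$ is not literally unitarily equivalent to the Fock module (the ambient Hilbert spaces differ), but its $K^1$-class is determined by the compression $P_A\rho_A(\cdot)P_A$ to $F_A\cong\ell^2(\tilde V_A)$, which is the Toeplitz representation --- this is what you use in step (iii) anyway, so the argument goes through.
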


More generally, for a finite word $\beta\in V_A\setminus \{{\o}\}$ we can consider the Hilbert space 
$$F_{A,\beta}=\bigoplus_{\gamma\in I_A: s(\gamma)=\beta}\ker \Delta_{\gamma}\subset L^2(G_A,\mu_{G_A}),$$ 
and the associated projection $P_{A,\beta}$ from $L^2(G_A,\mu_{G_A})$ onto $F_{A,\beta}$. By the same argument as above, we conclude the following.

\begin{prop}
\label{kclass}
For every $\beta\in V_A\setminus \{{\o}\}$, the triple $(L^2(G_A,\mu_A),\rho_A,2P_{A,\beta}-1)$ is an odd Fredholm module over $O_A$. Also, the associated class $[(L^2(G_A,\mu_A),\rho_A,2P_{A,\beta}-1)]\in K^1(O_A)$ is the image of the projection $[S_\beta S_\beta^*]\in K_0(O_{A^T})$ under the $KK$-duality isomorphism $K_0(O_{A^T})\to K^1(O_A)$.
\end{prop}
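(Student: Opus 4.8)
The plan is to establish the two assertions by the same method as for $P_A$. For the Fredholm module statement, I would use the identity $[P_{A,\beta},a]=-(1-P_{A,\beta})aP_{A,\beta}+((1-P_{A,\beta})a^{*}P_{A,\beta})^{*}$, valid since $\LC$ is a $*$-algebra, so that it suffices to show $(1-P_{A,\beta})aP_{A,\beta}$ has finite rank for all $a\in\LC$. Because the finite rank operators form an ideal and $\LC$ is generated as a $*$-algebra by $S_1,\dots,S_N$, it is enough to treat $a=S_i$ and $a=S_i^{*}$. The space $F_{A,\beta}$ is spanned by the kernel vectors $\mathrm{e}_{\gamma}$ with $s(\gamma)=\beta$, and reading off the action of $S_i$ and $S_i^{*}$ on these vectors from Proposition \ref{descmatrdecomp} (and Theorem \ref{lknlknadasd}) one finds $S_i\mathrm{e}_{\gamma}=\mathrm{e}_{i\gamma}\in F_{A,\beta}$ (or $0$) for all such $\gamma$ except for the single index $\gamma={\o}.\beta$ with $s(\gamma)_{|s(\gamma)|-1}=i$, where an extra term supported in a bisection $G_{{\o}.\widehat{s(\gamma)}}$ appears; likewise $S_i^{*}\mathrm{e}_{\gamma}\in F_{A,\beta}$ for all such $\gamma$ except $\gamma={\o}.\beta$ with $s(\gamma)_{|s(\gamma)|}=i$. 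Hence $(1-P_{A,\beta})S_iP_{A,\beta}$ and $(1-P_{A,\beta})S_i^{*}P_{A,\beta}$ have rank at most one, which gives the first claim (and, as in the earlier propositions, finite summability with finite rank commutators).

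For the $K$-homology statement, the first step is to reduce to $\beta$ of length one. The assignment $\alpha.\beta\mapsto\alpha$ identifies $\{\gamma\in I_A:s(\gamma)=\beta\}$ with $\{\alpha\in V_A:\alpha={\o}\text{ or }A_{\alpha_{|\alpha|},b}=1\}$, a set depending only on the last letter $b$ of $\beta$; by the computation above, the compression $a\mapsto P_{A,\beta}\rho_A(a)P_{A,\beta}$, transported to $\ell^{2}$ of this set via $\mathrm{e}_{\gamma}\mapsto\delta_{\alpha}$, becomes the Fock-type representation $\delta_{\alpha}\mapsto\delta_{i\alpha}$ modulo finite rank, again depending only on $b$. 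Therefore $(L^{2}(G_A,\mu_A),\rho_A,2P_{A,\beta}-1)$ and $(L^{2}(G_A,\mu_A),\rho_A,2P_{A,b}-1)$ represent the same class in $K^{1}(O_A)$; the companion reduction $[S_\beta S_\beta^{*}]=[S_\beta^{*}S_\beta]=[S_bS_b^{*}]$ in $K_0(O_{A^T})$ follows by telescoping the Cuntz--Krieger relations. So one is reduced to a single letter $b$.

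For $\beta=b$ a single letter, I would run the same argument as in the proof that $[L^{2}(G_A,\mu_A),\rho_A,2P_A-1]$ is the image of $[1]\in K_0(O_{A^T})$, i.e.\ the Kaminker--Putnam $KK$-duality computation of \cite{KPDual,GMCK,DGMW}. The key structural fact is $F_A=\bigoplus_{|b|=1}F_{A,b}$, with each $F_{A,b}$ invariant under $\rho_A(O_A)$ modulo finite rank, so the extension underlying $[2P_A-1]$ is the orthogonal (Busby) sum of those underlying the $[2P_{A,b}-1]$, in parallel with $[1]=\sum_{|b|=1}[S_bS_b^{*}]$; the refined statement that the $b$-th summand is the image of $[S_bS_b^{*}]$ is obtained by realising the Kaminker--Putnam fundamental class on $L^{2}(G_A,\mu_A)$ with $O_A$ acting by $\rho_A$ and a commuting copy of $O_{A^T}$, and observing that cutting the $O_{A^T}$-slot down by the range projection $S_bS_b^{*}$ turns the Fock projection $P_A$ into precisely $P_{A,b}$. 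The hard part is this last identification --- exhibiting the commuting $O_{A^T}$-action on $L^{2}(G_A,\mu_A)$ explicitly and verifying that it restricts $S_bS_b^{*}$ to $P_{A,b}$ on $F_A$ --- which, while bookkeeping-intensive, is a direct adaptation of the constructions in \cite{KPDual,GMCK,DGMW}.
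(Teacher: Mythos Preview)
Your proposal is correct and follows the same approach the paper indicates: the paper itself offers no proof beyond ``by the same argument as above'' and a deferral to \cite{KPDual,GMCK,DGMW}. Your treatment of the Fredholm-module part mirrors the proof of Proposition~\ref{lknljnada} and your outline for the $K$-homology identification is precisely the Kaminker--Putnam argument the paper cites; the reduction to the last letter $b=\beta_{|\beta|}$ is a detail the paper records only \emph{after} the proposition, as the remark $[S_\beta S_\beta^*]=[S_bS_b^*]$ in $K_0(O_{A^T})$.
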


We note that the classes $[S_\beta S_\beta^*]\in K_0(O_{A^T})$ generate  $K_0(O_{A^T})$. Indeed, we have a Murray--von Neumann equivalence $S_\beta S_\beta^*\sim_{\rm MN} S_\beta^* S_\beta$. A direct computation gives $S_\beta^*S_\beta=S_j^*S_j\sim_{MN}S_jS_j^*$ where $j=\beta_{|\beta|}$ is the last letter of $\beta$, so that
$$[S_\beta S_\beta^*]=[S_jS_j^*]\in K_0(O_{A^T}).$$
Finally \cite[Proposition 3.1]{CK2} shows that the map $\mathbb{Z}^N\to K_0(O_{A^T})$, $(k_j)_{j=1}^N\mapsto \sum_{j=1}^N k_j [S_jS_j^*]$ is surjective (with kernel $(1-A)\mathbb{Z}^N$). 

\subsection{Spectral triple representative}
\label{subsec:spectrip}

We recall the proper continuous function $\ell:G_A\to \mathbb{N}$ from \eqref{eq:scale}. In previous works \cite{GMCK,GMON}, one used the length function $|c|+\kappa$  constructed from the maps $\kappa:G_A\to \mathbb N$ and $c:G_A\to \mathbb Z$. However, for $g\in G_\gamma$ we have that
$$|c(g)|+\kappa(g)=
\begin{cases}
\ell(g)-|s(\gamma)|, \; &|r(\gamma)|+1>|s(\gamma)|,\\
2\ell(g)-2-3|r(\gamma)|\; &|r(\gamma)|+1\leq |s(\gamma)|
\end{cases},$$
so $\ell$ and $|c|+\kappa$ are mutually relatively bounded by each other. 

Consider now the unbounded operator $D$ on $\LC$ given by
\begin{equation}\label{eq:D}
Df=
\begin{cases}
-(\Delta+M_{\ell})f, &\text{if } f\in F_A^{\perp}\\
M_{\ell}f, &\text{if } f\in F_A
\end{cases}.
\end{equation}
We see from Theorem \ref{kljnkjnadadjbnojbn} that in the orthonormal basis $(\mathrm{e}_{\gamma},\mathrm{e}_{(\gamma,\nu,j)})_{(\gamma,\nu,j)\in \mathfrak{I}_A}\subset \LC$  for $L^2(G_A)$ from Theorem \ref{lknlknadasd}, we have 
$$D\mathrm{e}_{\gamma}=
\begin{cases}
-|\gamma|\mathrm{e}_{\gamma}, &\text{if } |s(\gamma)|>1,\\
|\gamma|\mathrm{e}_{\gamma}, &\text{if } |s(\gamma)|=1
\end{cases}.
$$
and 
$$D\mathrm{e}_{(\gamma,\nu,j)}=-(|\gamma|+\lambda_{s(\gamma)}^A(\nu))\mathrm{e}_{(\gamma,\nu,j)},$$
for $\lambda_{s(\gamma)}^A(\nu)$ as in Lemma \ref{complemlog}. Note that 
$$|\gamma|+\lambda_{s(\gamma)}^A(s(\gamma)\nu_0)\simeq |\gamma|+|\nu_0|,$$ 
by Proposition \ref{asjnkjnada}. We think of this spectral property as the splitting of 
$$|D|=\bigoplus_{\gamma\in I_A} (|\gamma|+\Delta_\gamma),\quad\mbox{under}\quad L^2(G_A,\mu_{G_A})=\bigoplus_{\gamma\in I_A} L^2(G_\gamma,\mu_{\gamma}),$$
 and where each $\Delta_\gamma\geq 0$ has compact resolvent with eigenvalues indexed by a one-point set and $(\nu_0,j)$ from a subset of $V_A\times\{1,\ldots,N\}$ where the size of each eigenvalue is $|\nu_0|$. In particular, $D$ has compact resolvent and $\mathrm{Tr}(\mathrm{e}^{-t|D|})<\infty$ for $t>\log(\lambda_{\max})/C_0$, where $C_0>0$ is the constant appearing in Proposition \ref{asjnkjnada}. \color{black} Since $P_A$ commutes with $M_\ell$ and $\Delta$, we can simplify our notation and write 
$$D=(2P_A-1)(\Delta+M_{\ell}),$$
as an unbounded self-adjoint operator. 

\begin{prop}
\label{lknlknlknad2}
The triple $(\LC, L^2(G_A,\mu_{G_A}), D)$ is an odd spectral triple such that 
$$D=(2P_A-1)(\Delta+M_{\ell}).$$
 Consequently, the $K^1$-homology class of it equals $[L^2(G_A,\mu_{G_A}),\rho_A,2P_A-1]\in K^1(O_A)$. And more generally, any finite word $\beta\in V_A$ defines an odd spectral triple 
 $$(\LC, L^2(G_A,\mu_{G_A}), (2P_{A,\beta}-1)(\Delta+M_{\ell})),$$ 
 whose $K^1$-homology class equals $[L^2(G_A,\mu_{G_A}),\rho_A,2P_{A,\beta}-1]\in K^1(O_A).$
\end{prop}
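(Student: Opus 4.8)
The plan is to verify the three spectral-triple axioms for $(\LC, L^2(G_A,\mu_{G_A}), D)$ with $D=(2P_A-1)(\Delta+M_\ell)$, then deduce the $K$-homology statement, and finally observe that the entire argument goes through verbatim with $P_A$ replaced by $P_{A,\beta}$.

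First I would check that $D$ is a well-defined self-adjoint operator with compact resolvent. Self-adjointness: since $P_A$ is a projection commuting with both $M_\ell$ and $\Delta$ (as noted in the text, because $P_A$ is the projection onto a sub-sum of the $\ker\Delta_\gamma$, each of which is $M_\ell$-homogeneous), the operator $\Delta+M_\ell$ is positive and essentially self-adjoint on $\LC$ (it is diagonalised by the wavelet basis of Theorem \ref{lknlknadasd}, with explicit nonnegative eigenvalues by Theorem \ref{kljnkjnadadjbnojbn}), and multiplying by the bounded self-adjoint involution $2P_A-1$ that commutes with it preserves essential self-adjointness. Compact resolvent: by the eigenvalue computation recorded after \eqref{eq:D}, the eigenvalues of $|D|$ on $\mathrm{e}_\gamma$ are $|\gamma|$ and on $\mathrm{e}_{(\gamma,\nu,j)}$ are $|\gamma|+\lambda^A_{s(\gamma)}(\nu)\simeq|\gamma|+|\nu_0|$, which tend to infinity (with finite multiplicities, and in fact $\operatorname{Tr}(\mathrm{e}^{-t|D|})<\infty$ for $t$ large, as the text already states); hence $(D\pm i)^{-1}$ is compact.

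Second, the boundedness of commutators $[D,a]$ for $a\in\LC$. Write $D=(2P_A-1)(\Delta+M_\ell)$ and expand $[D,a]=(2P_A-1)[\Delta+M_\ell,a]+[2P_A-1,a](\Delta+M_\ell)$. The first term is bounded: $[M_\ell,a]$ is bounded because $M_\ell$ and multiplication-type operators differ by a bounded amount on $\LC$ — more precisely one reduces to the generators $S_i$ and uses that $S_i$ shifts $\ell$ by a bounded amount (the entries of $[M_\ell,S_i]$ are uniformly bounded, as in the analysis of $P_\gamma S_i P_{\gamma'}$ in Proposition \ref{descmatrdecomp}), and $[\Delta,a]$ is bounded with $\|[\Delta,S_i]\|\le C$ by Proposition \ref{prop:bounded_com_Lap}. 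For the second term, $[2P_A-1,a]=2[P_A,a]$ and by the identity $[P_A,a]=-(1-P_A)aP_A+((1-P_A)a^*P_A)^*$ together with Proposition \ref{lknljnada}, $[P_A,a]$ is a finite-rank operator; but one must show that $[P_A,a](\Delta+M_\ell)$ is still bounded even though $\Delta+M_\ell$ is unbounded. This is where the finite-rank-ness is essential: $(1-P_A)aP_A$ has range in the span of finitely many $\mathrm{e}_{(\gamma,\nu,j)}$'s and is supported on the finite-dimensional span of finitely many $\mathrm{e}_\gamma$'s (by the explicit action $S_i\mathrm{e}_\gamma = \mathrm{e}_{i\gamma}$ or $0$ on the Fock-type vectors), on which $\Delta+M_\ell$ acts as a finite matrix, so $(1-P_A)aP_A(\Delta+M_\ell)$ is bounded; the adjoint term is handled by writing $P_Aa(1-P_A)(\Delta+M_\ell)=P_Aa(\Delta+M_\ell)(1-P_A)$ (using that $\Delta+M_\ell$ commutes with $P_A$) $=P_A[a,\Delta+M_\ell](1-P_A) + P_A(\Delta+M_\ell)a(1-P_A)$, and the second summand vanishes since $P_A(\Delta+M_\ell)=(\Delta+M_\ell)P_A$ while $(\Delta+M_\ell)P_A$ composed with $a(1-P_A)$ — more cleanly, $P_A(\Delta+M_\ell)a(1-P_A) = (\Delta+M_\ell)P_Aa(1-P_A)$ and $P_Aa(1-P_A)$ is finite-rank into $F_A$ where $\Delta+M_\ell=M_\ell$ is bounded on that finite-dimensional range. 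The upshot is $[D,a]$ is bounded for all $a\in\LC$, the main obstacle being precisely this interplay of the unbounded $\Delta+M_\ell$ with the finite-rank off-diagonal corner $[P_A,a]$.

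Finally, the $K$-homology computation. The bounded transform (phase) of $D=(2P_A-1)(\Delta+M_\ell)$ is $D|D|^{-1}=(2P_A-1)\operatorname{sgn}(\Delta+M_\ell)=(2P_A-1)$, since $\Delta+M_\ell\ge 0$ has, in fact, strictly positive spectrum (because $M_\ell\ge 0$ and on $\ker\Delta$ the eigenvalue $|\gamma|=|r(\gamma)|+|s(\gamma)|\ge 1$ as $|s(\gamma)|\ge 1$) — or, if one prefers to allow a kernel, one perturbs by a finite-rank operator without changing the class. Hence the Fredholm module $(L^2(G_A,\mu_{G_A}),\rho_A,\operatorname{sgn}(D))$ coincides with $(L^2(G_A,\mu_{G_A}),\rho_A,2P_A-1)$, whose class in $K^1(O_A)$ was identified in the preceding Propositions; therefore the spectral triple represents $[L^2(G_A,\mu_{G_A}),\rho_A,2P_A-1]$. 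For the general statement, $P_{A,\beta}$ is again a projection onto a sub-sum of the $\ker\Delta_\gamma$ (those with $s(\gamma)=\beta$), so it commutes with $M_\ell$ and $\Delta$; Proposition \ref{lknljnada}'s argument (and Proposition \ref{kclass}) applies verbatim to show $(1-P_{A,\beta})aP_{A,\beta}$ is finite rank, and the same three-step verification shows $(\LC, L^2(G_A,\mu_{G_A}), (2P_{A,\beta}-1)(\Delta+M_\ell))$ is an odd spectral triple whose phase is $2P_{A,\beta}-1$, hence whose class is $[L^2(G_A,\mu_{G_A}),\rho_A,2P_{A,\beta}-1]\in K^1(O_A)$ as identified in Proposition \ref{kclass}.
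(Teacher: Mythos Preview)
Your proposal is correct, but it takes a different and somewhat more laborious route for the commutator estimate than the paper does. You expand $[D,a]=(2P_A-1)[\Delta+M_\ell,a]+[2P_A-1,a](\Delta+M_\ell)$ and then have to justify why the second term---a finite-rank operator composed with an unbounded one---remains bounded. Your justification ultimately works (the support of $(1-P_A)aP_A$ lies in finitely many $\mathrm{e}_\gamma$'s because it is the adjoint of $P_Aa^*(1-P_A)$, whose finite-dimensional range sits in $F_A$, and on those vectors $\Delta+M_\ell=M_\ell$ acts by a bounded diagonal), but the exposition is tangled and the phrase ``on which $\Delta+M_\ell$ acts as a finite matrix'' needs exactly this unpacking to be convincing.

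The paper bypasses this entirely by first observing the algebraic identity
\[
D=(2P_A-1)M_\ell-\Delta,
\]
which follows from $P_A\Delta=\Delta P_A=0$ (since $F_A\subset\ker\Delta$). Then $[D,S_i]=[(2P_A-1)M_\ell,S_i]-[\Delta,S_i]$; the second term is bounded by Proposition~\ref{prop:bounded_com_Lap}, and the first is handled directly in the $\gamma$-matrix decomposition of Proposition~\ref{descmatrdecomp}: since $(2P_A-1)M_\ell$ is diagonal there, each nonzero entry of the commutator is $\zeta_i(\gamma,\gamma')S_{i,\gamma}^{\gamma'}$ for a bounded function $\zeta_i$. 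No finite-rank gymnastics with unbounded operators are needed. What your approach buys is that it would apply to any sign operator $2P-1$ with $[P,a]$ finite rank and $P$ commuting with $\Delta+M_\ell$, without needing $P\Delta=0$; what the paper's approach buys is a one-line reduction that avoids all domain subtleties. Your self-adjointness, compact-resolvent, and $K$-homology arguments match the paper's (the paper leaves the phase computation implicit, but your observation that $\Delta+M_\ell\ge 1$ so $\operatorname{sgn}(D)=2P_A-1$ is exactly right).
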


\begin{proof}
We shall prove the statement for $P_A$ and the proof about $P_{A,\beta}$ is similar. First, it is clear from the definition of $D$ in \eqref{eq:D} that $D=(2P_A-1)(\Delta+M_{\ell})$. Also, from the preceding discussion we see that $D$ has compact resolvent. We only need to show that $[D,S_i]$ extends to a bounded operator for every $i\in \{1,\ldots, N\}$. To this end, observe that $$D=(2P_A-1)M_{\ell}-\Delta.$$ From Proposition \ref{prop:bounded_com_Lap} we have that each $[\Delta,S_i]$ extends to a bounded operator. Furthermore, each $[(2P_A-1)M_{\ell},S_i]$ extends to a bounded operator as well, which can be seen as follows. We again view each $[(2P_A-1)M_{\ell},S_i]$ as a matrix operator using Proposition \ref{descmatrdecomp}. The matrix decomposition of the operator $(2P_A-1)M_{\ell}$ is diagonal and we derive that there is a bounded function $\zeta_{i}:I_{A}\times I_{A}\to \mathbb{R}$ such that $[(2P_A-1)M_{\ell},S_{i,\gamma}^{\gamma'}]=\zeta_{i}(\gamma, \gamma')S_{i,\gamma}^{\gamma'}$. We conclude that $[(2P_A-1)M_{\ell},S_i]$ is bounded.   
\end{proof}

\section{The isometry group and entropy}
\label{secljnjasn}

We now describe the isometry group of the spectral triple $(\LC, L^2(G_A,\mu_{G_A}), D)$ from Proposition \ref{lknlknlknad2}. We write $\mathsf{G}_D$ for the isometry group of $(\LC, L^2(G_A,\mu_{G_A}), D)$ as defined in \cite{park}. The group $\mathsf{G}_D$ is the subgroup of the automorphism group of $O_A$ consisting of automorphisms $\varphi $ of $O_A$ for which there is a unitary implementer $U$ of $\varphi $ on $L^2(G_A,\mu_{G_A})$ such that $UDU^*=D$ in the sense of unbounded operators, for more details see \cite{park}. The operator $U$ is uniquely determined by $\varphi $ up to an element of the unitary group $U(O_A')$ on the dense subspace $O_A\subseteq L^2(G_A,\mu_{G_A})$ (cf. Remark \ref{cyclicremark}) from 
\begin{equation*}\label{eq:implementer}
U(\xi)=\varphi (\xi), \quad \xi\in O_A.
\end{equation*}
In fact, an automorphism $\varphi $ of $O_A$ extends to a unitary on $L^2(G_A,\mu_{G_A})$ if and only if the KMS-state on $O_A$ for the gauge action is $\varphi $-invariant; which in turn is equivalent to $\varphi $ commuting with the gauge action (i.e. graded). 

Before diving into deeper results about  the isometry group, we start with some easy observations allowing us to embed $\mathsf{G}_D$ into the unitary group $U(N)$. Our argument closely follows \cite{contirossi} describing a similar problem on the algebra $O_N$. We note the following: 
\begin{enumerate}
\item $1$ is an eigenvalue of $D$ with eigenspace spanned by $\mathrm{e}_{{\o}.\beta}$ for $|\beta|=1$. We have that such an $\mathrm{e}_{{\o}.\beta}$ is a normalization of the characteristic function for the set 
$$
G_{{\o}.\beta}= \{(x,0,x)\in G_A:  \; x\in C(\beta)\}.
$$
In terms of the generators of $O_A$, $\mathrm{e}_{{\o}.\beta}=S_\beta S_\beta^*$. 
\item $2$ is an eigenvalue of $D$ with eigenspace spanned by $\mathrm{e}_{\alpha.\beta}$ for $|\alpha|=1$ and $|\beta|=1$ with $\alpha \beta \in V_A$. We have that such an $\mathrm{e}_{\alpha.\beta}$ is a normalization of the characteristic function for the set 
$$
G_{\alpha.\beta}= \{(x,1,y)\in G_A: x=\alpha y, \; y\in C(\beta)\}.
$$
In terms of the generators of $O_A$, $\mathrm{e}_{\alpha.\beta}=S_\alpha S_\beta S_\beta^*$.
\end{enumerate}
In particular, if $\varphi \in \mathsf{G}_D$, the eigenspace corresponding to the eigenvalue $1$ must be preserved. So there is a unitary matrix $u^{(1)}=(u^{(1)}_{\beta,\beta'})_{\beta,\beta'=1}^N\in U(N)$ such that 
$$\varphi (S_\beta S_\beta^*)=\sum_{\beta'=1}^N u^{(1)}_{\beta',\beta} S_{\beta'} S_{\beta'}^*.$$
In fact, given that $\varphi$ is an automorphism, $u^{(1)}$ must be a permutation matrix (i.e. only one non-zero entry per row and column which is $1$). Similarly, there must be a unitary matrix $u^{(2)}=(u^{(2)}_{\alpha,\alpha'})_{\alpha,\alpha'=1}^N\in U(N)$ such that for every $\alpha,\beta \in \{1,\ldots, N\}$ it holds
\begin{align*}
\varphi (S_\alpha S_\beta S_\beta^*)&=\sum_{\alpha',\beta'=1}^Nu^{(2)}_{\alpha',\alpha}u^{(1)}_{\beta',\beta}S_{\alpha'}S_{\beta'} S_{\beta'}^*\\
&=\sum_{\alpha'=1}^N u^{(2)}_{\alpha',\alpha} S_{\alpha'}\varphi (S_{\beta}S_{\beta}^*).
\end{align*}
Therefore, summing over $\beta$ we obtain 
$$\varphi (S_\alpha)=\sum_{\alpha'=1}^N u^{(2)}_{\alpha',\alpha} S_{\alpha'}.$$
Since $\{S_\alpha\}_{\alpha=1}^N$ generate $O_A$, the map\begin{equation}\label{eq:embedding}
\iota:\mathsf{G}_D\hookrightarrow U(N),\qquad \varphi\mapsto u^{(2)},
\end{equation}
is a group embedding. We aim to describe this embedding explicitly. To this end, we first obtain an algebraic description of the closed subgroup $\mathsf{G}_A$ of $U(N)$ consisting of all unitaries $u\in U(N)$ such that the map $\varphi_u(S_\alpha):=\sum_{\alpha'=1}^N u_{\alpha',\alpha} S_{\alpha'}$ for $\alpha \in \{1,\ldots, N\}$ extends to an automorphism of $O_A$.

\begin{prop}\label{prop:G_A}
The group $\mathsf{G}_A$ consists of all unitaries $u\in U(N)$ such that 
$$(u\otimes \widetilde{S}u) \widetilde{A} (u^*\otimes u^*\widetilde{S}^*)= (1\otimes \widetilde{S}) \widetilde{A} (1\otimes \widetilde{S}^*),$$ 
where $\widetilde{A}\in M_N(\mathbb C)\otimes M_N(\mathbb C)$ is given on the standard basis $\{e_i\otimes e_j\}_{i,j=1}^{N}$ of $\mathbb C^N\otimes \mathbb C^N$ by 
$$\widetilde{A}(e_i\otimes e_j)=A_{i,j}e_i\otimes e_j,$$
 and $\widetilde{S}\in M_N(O_A)$ is the diagonal matrix $\widetilde{S}e_i=S_ie_i.$
\end{prop}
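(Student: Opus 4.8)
The plan is to reduce the condition that $\varphi_u$ extends to an automorphism of $O_A$ to the Cuntz--Krieger relations~\eqref{eq: CK-relations} for the elements $T_i:=\varphi_u(S_i)=\sum_{a=1}^N u_{a,i}S_a$, and then to recognise the resulting system of operator identities as the single matrix equation in the statement.

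First I would invoke the universal property of $O_A$: the assignment $S_i\mapsto T_i$ extends to a unital $*$-homomorphism $O_A\to O_A$ precisely when $T_1,\dots,T_N$ satisfy~\eqref{eq: CK-relations}. Since $A$ is primitive, $O_A$ is simple, so any such homomorphism is injective; it is also surjective, because $S_a=\sum_{i}\overline{u_{a,i}}\,T_i$ lies in the $C^*$-subalgebra generated by $T_1,\dots,T_N$. Hence $\varphi_u$ extends to an automorphism if and only if $T_1,\dots,T_N$ obey~\eqref{eq: CK-relations}; in particular $\mathsf G_A$ is a subgroup of $U(N)$, with inverse $u\mapsto u^*$ (and $\varphi_{u^*}=\varphi_u^{-1}$). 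Moreover a one-line computation using $uu^*=1$ gives $\sum_i T_iT_i^*=\sum_{a,b}(uu^*)_{a,b}\,S_aS_b^*=\sum_a S_aS_a^*=1$, so the first relation in~\eqref{eq: CK-relations} is automatic for every unitary $u$; thus $u\in\mathsf G_A$ if and only if
\[
T_i^*T_k=\delta_{ik}\sum_{j=1}^N A_{ij}T_jT_j^*\qquad\text{for all } i,k\in\{1,\dots,N\}.
\]

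The main step is to repackage this family of identities as the displayed tensor equation. Using $S_a^*S_b=\delta_{ab}S_a^*S_a=\delta_{ab}\sum_j A_{aj}S_jS_j^*$ one computes
\[
T_i^*T_k=\sum_{a}\overline{u_{a,i}}\,u_{a,k}\sum_{j}A_{aj}S_jS_j^*,\qquad \sum_{j}A_{ij}T_jT_j^*=\sum_{a,b}\Big(\sum_{j}A_{ij}u_{a,j}\overline{u_{b,j}}\Big)S_aS_b^*,
\]
so that $T_i^*T_k$ always lies in the abelian subspace $\operatorname{span}\{S_pS_p^*\}$. Next I would expand both $(u\otimes\widetilde S u)\widetilde A(u^*\otimes u^*\widetilde S^*)$ and $(1\otimes\widetilde S)\widetilde A(1\otimes\widetilde S^*)$ inside $M_N(\mathbb C)\otimes M_N(O_A)$, writing $u\otimes\widetilde S u=(u\otimes 1)(1\otimes\widetilde S)(1\otimes u)$ and using that $\widetilde S\widetilde S^*=\operatorname{diag}(S_1S_1^*,\dots,S_NS_N^*)$ and that $(1\otimes\widetilde S)\widetilde A(1\otimes\widetilde S^*)=\sum_{p,q}A_{pq}\,E_{pp}\otimes S_qS_q^*E_{qq}$. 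A direct computation of all matrix entries on both sides, followed by matching the coefficients of the linearly independent nonzero elements $S_pS_q^*$ of $O_A$, shows that equality of the two elements of $M_N(\mathbb C)\otimes M_N(O_A)$ is equivalent to the system $\{T_i^*T_k=\delta_{ik}\sum_j A_{ij}T_jT_j^*\}_{i,k}$. Combined with the previous paragraph, this identifies $\mathsf G_A$ with the solution set of the tensor equation, as claimed.

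The main obstacle is exactly this last identification. Expanded in matrix coordinates the two sides of the tensor equation look rather different from the Cuntz--Krieger relations for $(T_i)$, so one must carefully track which index of $u$ (a row versus a column, conjugated or not) appears in which of the two tensor legs, and check that the coefficient matching really does reproduce all of the relations $T_i^*T_k=\delta_{ik}\sum_j A_{ij}T_jT_j^*$ and nothing else. The structural point that makes the bookkeeping close up is that $(1\otimes\widetilde S)\widetilde A(1\otimes\widetilde S^*)$ only ever couples an index $q$ to an index $p$ through the combination $A_{pq}S_qS_q^*$, which is precisely the shape of the terms occurring in~\eqref{eq: CK-relations}; consequently no information is created or lost in passing between the two formulations, even though distinct $S_pS_q^*$ with $p\ne q$ may coincide or vanish in $O_A$.
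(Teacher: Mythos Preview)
Your proposal is correct and follows essentially the same route as the paper: both reduce membership in $\mathsf{G}_A$ to the Cuntz--Krieger relations for $T_i=\varphi_u(S_i)$, compute $T_i^*T_k$ and $\sum_j A_{ij}T_jT_j^*$ explicitly, and then identify the resulting scalar conditions with the tensor equation via a coordinate calculation. Your additional remark that simplicity of $O_A$ (plus the obvious surjectivity from $S_a=\sum_i\overline{u_{a,i}}T_i$) upgrades any $*$-homomorphism extending $\varphi_u$ to an automorphism is a welcome detail the paper leaves implicit.
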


\begin{proof}
We describe the elements $u\in U(N)$ for which the map $\varphi_u(S_\alpha):=\sum_{\alpha'=1}^N u_{\alpha',\alpha} S_{\alpha'}$ extends to an automorphism of $O_A$; that is, $\{\varphi_u(S_\alpha)\}_{\alpha=1}^N$ satisfies the Cuntz--Krieger relations. Since $u$ is unitary one has
$$\sum_\alpha \varphi_u(S_\alpha)\varphi_u(S_\alpha)^*=1.$$
Moreover, it holds that 
\begin{align*}
\varphi_u(S_\alpha)^*\varphi_u(S_\beta)=&\sum_{\alpha',\beta'} \overline{u_{\alpha',\alpha}}u_{\beta',\beta} S_{\alpha'}^* S_{\beta'}=\sum_{\alpha',\beta'} \overline{u_{\alpha',\alpha}}u_{\alpha',\beta} A_{\alpha',\beta'} S_{\beta'} S_{\beta'}^*.
\end{align*}
Therefore, when $\alpha \neq \beta$, we have that $\varphi_u(S_\alpha)^*\varphi_u(S_\beta)=0$ if and only if for every $\beta'\in \{1,\ldots N\}$ one has 
$$\sum_{\alpha'}\overline{u_{\alpha',\alpha}}u_{\alpha',\beta}A_{\alpha',\beta'}=0.$$ 
On the other hand, 
$$\sum_{\alpha'}A_{\alpha,\alpha'}\varphi_u(S_{\alpha'})\varphi_u(S_{\alpha'})^*=\sum_{\alpha',\beta',\beta''}\overline{u_{\beta'',\alpha'}}u_{\beta',\alpha'} A_{\alpha,\alpha'} S_{\beta'}S_{\beta''}^*.$$ 
Consequently, we have that $$\varphi_u(S_\alpha)^*\varphi_u(S_\alpha)=\sum_{\alpha'}A_{\alpha,\alpha'}\varphi_u(S_{\alpha'})\varphi_u(S_{\alpha'})^*$$ 
if and only if when $\beta'\neq \beta''$ it holds that
$$\sum_{\alpha'}\overline{u_{\beta'',\alpha'}}u_{\beta',\alpha'}A_{\alpha,\alpha'}S_{\beta'}S_{\beta''}^*=0$$
and for $\beta'=\beta''$ it holds 
$$\sum_{\alpha'}\overline{u_{\beta',\alpha'}}u_{\beta',\alpha'}A_{\alpha,\alpha'}=\sum_{\alpha'}\overline{u_{\alpha',\alpha}}u_{\alpha',\alpha}A_{\alpha',\beta'}.$$

In summary, for $u\in U(N)$ the map $\varphi_u$ extends to an automorphism of $O_A$ if and only if 
\begin{equation}\label{eq:isom}
\delta_{\alpha,\beta}\sum_{\alpha'}\overline{u_{\beta'',\alpha'}}u_{\beta',\alpha'}A_{\alpha,\alpha'}S_{\beta'}S_{\beta''}^*=\delta_{\beta',\beta''}\sum_{\alpha'}\overline{u_{\alpha',\alpha}}u_{\alpha',\beta}A_{\alpha',\beta'} S_{\beta'}S_{\beta''}^*.
\end{equation}
Then, a simple calculation shows that \eqref{eq:isom} is equivalent to 
$$(u\otimes \widetilde{S}u) \widetilde{A} (u^*\otimes u^*\widetilde{S}^*)= (1\otimes \widetilde{S}) \widetilde{A} (1\otimes \widetilde{S}^*),$$ 
where $\widetilde{A}$ and $\widetilde{S}$ are the matrices described in the statement. This completes the proof.
\end{proof}

\begin{remark}
For the Cuntz algebra $O_N$ we have $A_{i,j}=1$ for all $i,j$ and hence in this case $\mathsf{G}_A= U(N)$. In general, the group $\mathsf{G}_A$ is a compact Lie group and it is straightforward to check that the associated Lie algebra consists of all skew-hermitian matrices $X\in M_N(\mathbb C)$ such that 
$$ [1_N\otimes \widetilde{S}X\widetilde{S}^*+X\otimes \widetilde{S}\widetilde{S}^*,\widetilde{A}]=0.$$
\end{remark}
We write $\mathrm{P}_N$ for the group of $N\times N$-permutation matrices, which of course is isomorphic to the symmetric group on $N$ elements. Consider now the subgroup $\Aut(A)\subset \mathrm{P}_N$ defined as $$\Aut(A):=\{q\in  \mathrm{P}_N: qA=Aq\}.$$ In other words, $\Aut(A)$ is the automorphism group of the directed graph associated to $A$. For $q\in \Aut(A)$ and $i\in \{1,\ldots,N\}$ we shall denote by $q(i)\in \{1,\ldots,N\}$ the permutation of $i$ under $q$. An immediate observation is that $\Aut(A)$ acts on the space of admissible finite words $V_A$ by fixing the empty word ${\o}$ and acting on $\alpha_1\ldots \alpha_n \in V_A$ by 
\begin{equation}\label{eq:Aut_action}
q\cdot \alpha_1\ldots \alpha_n := q(\alpha_1)\ldots q(\alpha_n),\qquad q\in \Aut(A).
\end{equation}
This action extends to an action on the index set $I_A$ in \eqref{eq:indexset}. For brevity, we shall denote the action of $q\in \Aut(A)$ on $x\in V_A$ or $I_A$ by $q\cdot x$.

Moreover, $\Aut(A)$ acts by conjugation on $\mathbb T^N$, where the latter is viewed as the group of $N\times N$-diagonal matrices with entries in $\mathbb T$. Further, the associated semi-direct product $\mathbb T^N\rtimes \Aut(A)$ embeds as a group into $U(N)$ by sending $(c,q)\mapsto cq$. At this point we identify $\mathbb T^N\rtimes \Aut(A)$ with its image in $U(N)$. Moreover, from Proposition \ref{prop:G_A} we obtain that $\mathbb T^N\rtimes \Aut(A)$ is in fact a subgroup of $\mathsf{G}_A$ because for every $i,j\in \{1,\ldots,N\}$ and $q\in \Aut(A)$ it holds that $A_{i,j}=A_{q(i),q(j)}.$

The action of the group $\mathbb T^N\rtimes \Aut(A)$ can be described dynamically as well. An element $q\in \Aut(A)$ induces a shift equivariant measure preserving isometry
\[v_{q}:\Sigma_{A}\to\Sigma_{A},\quad (x_{n})_{n\in\mathbb{N}}\mapsto (q(x_{n}))_{n\in\mathbb{N}}.\]
This map can be lifted to a measure preserving homeomorphism $V_{q}:G_{A}\to G_{A}$ by setting $V_{q}(x,n,y):=(v_{q}(x),n,v_{q}(y))$ with the property that $V_{q}:(G_{\gamma},\mathrm{d}_\gamma)\to (G_{q\cdot \gamma}, \mathrm{d}_{q\cdot \gamma})$ with $\mu_{\gamma}=\mu_{q\cdot \gamma}$. An element $c\in\mathbb{T}^{N}$ acts on $L^{2}(G_{A},\mu_{G_{A}})$ via
\[cf(g):=c_{\alpha_{1}(g)}\cdots c_{\alpha_{|\alpha|}(g)}c_{\beta_{|\beta|}}\overline{c_{\beta_{1}(g)}\cdots c_{\beta_{|\beta|-1}(g)}}f(g).\]
Here $\alpha(g)$ and $\beta(g)$ are defined in terms of the discretization map $$\ell:G_{A}\to I_{A},\quad \ell(g)=(\ell_{r}(g),\ell_{s}(g)),$$
of Definition \ref{def: discr} by setting $\alpha(g):=\ell_{r}(g)$ and $\beta(g):=\ell_{s}(g)$.
\begin{thm}
\label{thm:Isom_group}
The isometry group $\mathsf{G}_D$ of the spectral triple $(\LC, L^2(G_A,\mu_{G_A}), D)$ is a compact Lie group. In particular, the image of $\mathsf{G}_D$ under the embedding $\iota:\mathsf{G}_D\hookrightarrow U(N)$ defined in \eqref{eq:embedding} coincides with the subgroup $\mathbb T^N\rtimes \Aut(A)$ of $\mathsf{G}_A$.
\end{thm}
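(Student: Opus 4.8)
The plan is to prove the two inclusions $\mathbb T^N\rtimes\Aut(A)\subseteq\iota(\mathsf G_D)$ and $\iota(\mathsf G_D)\subseteq\mathbb T^N\rtimes\Aut(A)$ separately, and then note that $\mathsf G_D$ is closed in $U(N)$ (hence a compact Lie group) since it is cut out by the algebraic/spectral conditions described below. For the first inclusion, I would take $(c,q)\in\mathbb T^N\rtimes\Aut(A)$, form the unitary $U_{(c,q)}:=c\circ V_q$ on $L^2(G_A,\mu_{G_A})$ using the explicit formulas preceding the theorem, and check directly that it implements the automorphism $\varphi_{cq}$ on $O_A$ (i.e. acts as $\varphi_{cq}$ on the cyclic vector $1\in O_A\subseteq L^2(G_A,\mu_{G_A})$) and that it commutes with $D$. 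Commutation with $D$ reduces, via the orthonormal eigenbasis $(\mathrm e_\gamma,\mathrm e_{(\gamma,\nu,j)})$ of Theorem \ref{lknlknadasd} and the eigenvalue formula in \eqref{eq:D}, to the statements: $V_q$ sends $G_\gamma$ isometrically and measure-preservingly to $G_{q\cdot\gamma}$ (stated above), $|q\cdot\gamma|=|\gamma|$ and $|s(q\cdot\gamma)|=|s(\gamma)|$ (obvious since $q$ permutes letters), $\lambda^A_{q\cdot s(\gamma)}(q\cdot\nu)=\lambda^A_{s(\gamma)}(\nu)$ (immediate from the closed form in Lemma \ref{complemlog} together with $A_{q(i),q(j)}=A_{i,j}$, hence $P_{q(i),q(j)}=P_{i,j}$ and $u_{q(i)}=u_i$, $v_{q(i)}=v_i$ by uniqueness of the Perron--Frobenius eigenvectors), and that $c$ acts diagonally in this eigenbasis with eigenvalue-preserving phases. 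Since $P_A$, $M_\ell$ and $\Delta$ are all preserved, $U_{(c,q)}DU_{(c,q)}^*=D$, so $(c,q)\in\mathsf G_D$ and $\iota(c,q)=cq$.

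For the reverse inclusion, I would start from the embedding $\iota:\mathsf G_D\hookrightarrow U(N)$, $\varphi\mapsto u^{(2)}$, already constructed in the text, together with the fact derived there that $u^{(1)}$ (the matrix describing how $\varphi$ permutes the rank-one projections $S_\beta S_\beta^*=\mathrm e_{{\o}.\beta}$) is a \emph{permutation} matrix, say $q$. The key is to show that $u^{(2)}$ must also be "$q$ up to a diagonal unitary", i.e. $u^{(2)}\in\mathbb T^N q$, and that $q\in\Aut(A)$. For the first point, I would use the eigenvalue-$2$ eigenspace: $\varphi$ must permute the $\mathrm e_{\alpha.\beta}=S_\alpha S_\beta S_\beta^*$ ($|\alpha|=|\beta|=1$, $\alpha\beta\in V_A$) among themselves \emph{as basis vectors up to phase}, because these are (normalized characteristic functions hence) mutually orthogonal with the same norm and the implementer $U$ is unitary; combined with $\varphi(S_\alpha S_\beta S_\beta^*)=\sum_{\alpha'}u^{(2)}_{\alpha',\alpha}S_{\alpha'}\varphi(S_\beta S_\beta^*)$ and the linear independence of the $S_{\alpha'}S_{\beta'}S_{\beta'}^*$, this forces each column of $u^{(2)}$ to have a single nonzero entry of modulus one, i.e. $u^{(2)}=c\,q'$ for some $c\in\mathbb T^N$ and permutation $q'$. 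Pairing this with the constraint $\varphi(S_\alpha S_\beta S_\beta^*)=\sum u^{(2)}_{\alpha',\alpha}u^{(1)}_{\beta',\beta}S_{\alpha'}S_{\beta'}S_{\beta'}^*$ landing inside $O_A$ forces $q'$ and $q=u^{(1)}$ to be compatible with $A$: $A_{\alpha,\beta}=1\iff A_{q'(\alpha),q(\beta)}=1$, and a short argument (e.g. using that $\varphi$ is an automorphism, or symmetry $\alpha\leftrightarrow$ source/range, or simply that $\varphi(S_\alpha)=\sum u^{(2)}_{\alpha',\alpha}S_{\alpha'}$ must satisfy the Cuntz--Krieger relations as in Proposition \ref{prop:G_A}) pins down $q'=q\in\Aut(A)$. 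Hence $u^{(2)}=cq\in\mathbb T^N\rtimes\Aut(A)$.

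To finish, I would invoke Proposition \ref{prop:G_A}: any $\varphi\in\mathsf G_D$ in particular has $u^{(2)}\in\mathsf G_A$, and conversely we have just shown $\mathbb T^N\rtimes\Aut(A)\subseteq\mathsf G_A$; combined with the two inclusions above this gives $\iota(\mathsf G_D)=\mathbb T^N\rtimes\Aut(A)$ as a subgroup of $\mathsf G_A\subseteq U(N)$. Compactness and the Lie group structure follow because $\mathbb T^N\rtimes\Aut(A)$ is a closed subgroup of $U(N)$ (it is $\mathbb T^N$ times the finite set $\Aut(A)$), and $\iota$ is a topological group embedding. The step I expect to be the main obstacle is the reverse inclusion, specifically showing rigorously that preservation of the eigenvalue-$2$ eigenspace forces $u^{(2)}$ to be monomial (a diagonal times a permutation) rather than an arbitrary unitary: one has to argue carefully with the unitarity of the implementer $U$ on $L^2$ and the linear independence of the relevant products of generators, rather than just with the automorphism property of $\varphi$ on $O_A$, since the latter alone would only give $\mathsf G_A$, which for $O_N$ is all of $U(N)$. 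The role of the spectral condition $UDU^*=D$ — equivalently, of $\varphi$ respecting the eigenspace decomposition of $D$ — is exactly what cuts $U(N)$ down to $\mathbb T^N\rtimes\Aut(A)$, and making that reduction precise is the crux.
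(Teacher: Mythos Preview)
Your forward inclusion matches the paper's: construct the unitary implementer from the measure-preserving isometry $V_q$ and the phases $c$, then verify it commutes with $P_A$, $M_\ell$, and $\Delta$ separately. (The paper invokes a general lemma that the logarithmic Dirichlet Laplacian commutes with measure-preserving isometries rather than checking eigenvalues by hand, but that is cosmetic.)

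Your reverse inclusion, however, is more convoluted than necessary and contains the gap you yourself flag. You try to use the $2$-eigenspace to force $u^{(2)}$ to be monomial, and your stated reason---``these are mutually orthogonal with the same norm and the implementer $U$ is unitary''---does not suffice: a unitary acting on a finite-dimensional eigenspace need not permute any given orthonormal basis. The paper sidesteps this entirely by staying in the $1$-eigenspace and using the \emph{multiplicativity} of $\varphi$. Concretely, compute $\varphi(S_\beta S_\beta^*)$ in two ways. On the one hand it equals $S_{q(\beta)}S_{q(\beta)}^*$ (this is the permutation $u^{(1)}=q$ already established before the theorem). On the other hand it equals
\[
\varphi(S_\beta)\varphi(S_\beta)^*=\sum_{\alpha',\alpha''}u^{(2)}_{\alpha',\beta}\,\overline{u^{(2)}_{\alpha'',\beta}}\;S_{\alpha'}S_{\alpha''}^*.
\]
The elements $S_{\alpha'}S_{\alpha''}^*$ are (when nonzero) characteristic functions of disjoint bisections in $L^2(G_A,\mu_{G_A})$; in particular the diagonal ones $S_{\alpha'}S_{\alpha'}^*$ are linearly independent and the off-diagonal ones are orthogonal to the $1$-eigenspace. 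Comparing the diagonal parts therefore gives $|u^{(2)}_{\alpha',\beta}|^2=\delta_{\alpha',q(\beta)}$, so $u^{(2)}$ is monomial with the \emph{same} permutation $q$ as $u^{(1)}$. This is the ``simple calculation for $\varphi(S_\beta S_\beta^*)$'' the paper refers to, and it is precisely where the spectral constraint (preservation of the $1$-eigenspace) meets the automorphism property to cut $U(N)$ down to $\mathbb T^N\rtimes\mathrm P_N$. Intersecting with $\mathsf G_A$ via Proposition~\ref{prop:G_A} then forces $q\in\Aut(A)$, as you also do.

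So the crux you identify is real, but its resolution is simpler than you anticipate and lives one eigenspace lower than where you are looking.
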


\begin{proof}
The group $\mathsf{G}_D$ is a compact Lie group since $\iota(\mathsf{G}_D)$ is a closed subgroup of $U(N)$. Moreover, from Proposition \ref{prop:G_A} we obtain that 
\begin{equation}\label{eq:Isom_group_1}
\iota(\mathsf{G}_D)\subset \mathsf{G}_A,
\end{equation}
since by definition $\mathsf{G}_D$ consists of automorphisms of $O_A$. Further, the $1$-eigenspace of $D$, which is spanned by $S_{\beta}S_{\beta}^*$ for $|\beta|=1$, is preserved by $\mathsf{G}_D$ and hence a simple calculation for $\varphi(S_{\beta}S_{\beta}^*)$, where $\varphi \in \mathsf{G}_D$ is arbitrary, yields that
\begin{equation}\label{eq:Isom_group_2}
\iota(\mathsf{G}_D)\subset \mathbb T^N\rtimes \mathrm{P}_N.
\end{equation}
In particular, from \eqref{eq:Isom_group_1} and \eqref{eq:Isom_group_2} we have that 
\begin{equation}\label{eq:Isom_group_3}
\iota(\mathsf{G}_D)\subset \mathbb T^N\rtimes \Aut(A).
\end{equation}

Now let $u\in \mathbb T^N\rtimes \Aut(A)$. Since in particular $u\in \mathsf{G}_A$, we have that the KMS-state for the gauge action on $O_A$ is $\varphi_u$-invariant, where $\varphi_u:O_A\to O_A$ is the automorphism given on $S_{\alpha}$ for $\alpha\in \{1,\ldots, N\}$ by
\begin{equation}\label{eq:Isom_group_4}
\varphi_u(S_\alpha):=\sum_{\alpha'=1}^N u_{\alpha',\alpha} S_{\alpha'}.
\end{equation}
As a result, by viewing $O_A\subseteq L^2(G_A,\mu_{G_A})$ as a dense subspace (cf. Remark \ref{cyclicremark}), the operator $U$ on $L^2(G_A,\mu_{G_A})$ defined as $$U_u(\xi):=\varphi_u (\xi), \quad \xi\in O_A$$ is a unitary implementing $\varphi_u$. To prove equality for \eqref{eq:Isom_group_3} we have to show that $[U_u,D]=0$. 

To conclude $[U_u,D]=0$ it suffices to show that $U_u$ commutes with the projection $P_A$ on the Fock space $F_A$, the potential $M_{\ell}$ and the logarithmic Dirichlet Laplacian $\Delta$, see Section \ref{sec:NCG of G_A} for the description of these operators. By writing $u=cq$, where $c\in \mathbb T^N$ and $q\in \Aut(A)$, we have that 
$$\varphi_u(S_{\alpha})=u_{q(\alpha),\alpha}S_{q(\alpha)}.$$ 
Therefore, the Fock space $F_A$ and its complement are preserved. In other words, $U_u$ commutes with $P_A$. Similarly, the eigenspaces of $M_{\ell}$ are preserved and hence $U_u$ commutes with $M_{\ell}$. More precisely, we have that $U_u$ preserves the direct sum decomposition $L^2(G_A,\mu_{G_A})=\bigoplus_{\gamma\in I_A} L^2(G_{\gamma},\mu_{\gamma})$ in the sense that $U_u=\oplus_\gamma U_{u,\gamma}$ where for every $\gamma \in I_A$ the unitary 
$$U_{u,\gamma}:L^2(G_{\gamma},\mu_{\gamma})\to L^2(G_{q\cdot \gamma},\mu_{q\cdot \gamma}),$$ 
acts as the composition of unitary induced by the measure preserving isometry $v_{q}$ and a complex scalar. Since each individual logarithmic Dirichlet Laplacian $\Delta_{\gamma}$ commutes with unitaries induced by measure-preserving isometries (see \cite[Lemma 5.9]{GerMes}) we obtain that $U_u$ commutes with $\Delta$ as well. The proof is now complete.
\end{proof}

\begin{remark}
With a similar proof as that of Theorem \ref{thm:Isom_group}, one can prove that for a finite word $\beta\in V_A$ the isometry group of the spectral triple $(\LC, L^2(G_A,\mu_{G_A}), (2P_{A,\beta}-1)(\Delta+M_{\ell}))$ (see Proposition \ref{lknlknlknad2}) is the compact Lie group $\mathbb T^N\rtimes \Aut_\beta(A)$ where $\Aut_\beta(A)\subset \Aut(A)$ is the subgroup of graph automorphisms that preserve $\beta$ viewed as a finite path in the graph. 

\end{remark}

\subsection{Voiculescu's topological entropy of isometries}

Let us study an interesting fact about the group $\mathsf{G}_{A}$ in relation to Voiculescu's topological entropy. The notion of topological entropy dates back to Kolmogorov--Sinai, who defined the entropy of a measure with respect to a self-map of the space. Among other things, entropy is used to distinguish \enquote{probabilistic} (positive entropy) from \enquote{deterministic} (zero entropy) dynamics. In several examples, there is a measure maximizing the entropy and its value coincides with the topological entropy. The latter notion was extended to unital completely positive maps by Voiculescu \cite{voicent}. Let us recall its definition. 

Let $B$ be a unital $C^*$-algebra and write ${\rm FS}(B)$ for the set of all finite subsets of $B$. If $\Omega\in {\rm FS}(B)$ and $\varepsilon>0$, an $\varepsilon$-approximating triple $(\varphi,\psi,C)$ of $\Omega$ consists of a finite-dimensional $C^*$-algebra $C$, and two ucp-maps $\varphi:B\to C$ and $\psi:C\to B$ such that $\|\psi(\varphi(b))-b\|_B<\varepsilon$ for all $b\in \Omega$. We write ${\rm CPA}(B,\Omega,\varepsilon)$ for the set of $\varepsilon$-approximating triples of $\Omega$. If we assume that $B$ is nuclear, e.g. a Cuntz--Krieger algebra, then ${\rm CPA}(B,\Omega,\varepsilon)$ is non-empty for any $\varepsilon>0$ and $\Omega\in {\rm FS}(B)$. We set 
$${\rm rcp}(\Omega,\varepsilon):=\min\left\{{\rm rank}(C): \,(\varphi,\psi,C)\in {\rm CPA}(B,\Omega,\varepsilon)\right\}.$$
Here ${\rm rank}(C)$ is defined as the dimension of a maximal abelian subalgebra in $C$. 

Now let $\alpha:B\to B$ a unital completely positive map (ucp). For $\Omega\in {\rm FS}(B)$ and $n\in \mathbb{N}$, we use the notation $\Omega^{(n)}:=\cup_{k=0}^n \alpha^k(\Omega)$. Voiculescu's topological entropy of $\alpha$ is defined as 
\begin{equation}
\label{vtopent}
{\rm ht}(\alpha):=\sup_{\varepsilon>0, \Omega\in {\rm FS}(B)} \limsup_{n\to \infty} \frac{\log\left({\rm rcp}(\Omega^{(n)},\varepsilon)\right)}{n}.
\end{equation}

\begin{thm}
\label{vanishihsidnadin}
For all $u\in \mathsf{G}_A$ the Voiculescu noncommutative topological entropy vanishes on the automorphism $\varphi_u:O_A\to O_A$. 
\end{thm}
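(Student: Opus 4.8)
The plan is to show that $\varphi_u$ is, up to conjugation, built out of a gauge-type automorphism and a graph automorphism, each of which has vanishing Voiculescu entropy, and that this vanishing is preserved under the operations that combine them. More precisely, I would first reduce to understanding $\mathsf{G}_A$ structurally. By Proposition \ref{prop:G_A} the group $\mathsf{G}_A$ is a compact Lie group, and an element $u\in\mathsf{G}_A$ need not lie in $\mathbb{T}^N\rtimes\Aut(A)$ in general (that identification was only for $\iota(\mathsf{G}_D)$), so I cannot simply cite Theorem \ref{thm:Isom_group}. Instead I would argue directly: the automorphism $\varphi_u$ is \emph{graded}, i.e. it commutes with the gauge action $\alpha_z$, since $\varphi_u(S_\alpha)=\sum_{\alpha'}u_{\alpha',\alpha}S_{\alpha'}$ is homogeneous of degree $1$. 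Graded automorphisms of $O_A$ preserve the gauge-invariant AF-subalgebra $\mathcal{F}_A\subset O_A$ and act on it; on $O_A$ itself a graded automorphism is determined by a continuous path of unitaries in the finite-dimensional spaces spanned by $\{S_iS_j^*\}$, which is precisely the content captured by $u\in U(N)$.

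The core estimate is the standard sub-multiplicativity/locality trick for Voiculescu entropy. Fix $\Omega\in\mathrm{FS}(O_A)$ and $\varepsilon>0$. Since $\LC$ (equivalently, finite linear combinations of monomials $S_\alpha S_\beta^*$) is dense in $O_A$, I may assume after an $\varepsilon/3$-perturbation that $\Omega$ consists of such monomials of length at most some $L$, i.e. $\Omega\subset\mathcal{F}_A^{(L)}$ where $\mathcal{F}_A^{(L)}$ denotes the span of $\{S_\alpha S_\beta^*: |\alpha|,|\beta|\le L\}$, a finite-dimensional $C^*$-algebra. The key point is that $\varphi_u$, being homogeneous of degree $0$ on products $S_\alpha S_\beta^*$ with $|\alpha|=|\beta|$ and degree $|\alpha|-|\beta|$ in general, maps $\mathcal{F}_A^{(L)}$ \emph{into itself}: $\varphi_u(S_\alpha S_\beta^*)=\sum u_{\alpha'_1,\alpha_1}\cdots u_{\beta'_{|\beta|},\beta_{|\beta|}}S_{\alpha'}S_{\beta'}^*$ is again supported on words of length $\le L$. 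Hence $\Omega^{(n)}=\bigcup_{k=0}^n\varphi_u^k(\Omega)\subset\mathcal{F}_A^{(L)}$ for all $n$, so $\mathrm{rcp}(\Omega^{(n)},\varepsilon)\le\mathrm{rcp}(\mathcal{F}_A^{(L)},\varepsilon)$ is bounded \emph{independently of $n$} — take $C=\mathcal{F}_A^{(L)}$ with $\varphi=\mathrm{id}$, $\psi=$ the inclusion, which is a valid $0$-approximating triple. Therefore
\[
\mathrm{ht}(\varphi_u)=\sup_{\varepsilon,\Omega}\limsup_{n\to\infty}\frac{\log\mathrm{rcp}(\Omega^{(n)},\varepsilon)}{n}\le\sup_{L}\limsup_{n\to\infty}\frac{\log\mathrm{rank}(\mathcal{F}_A^{(L)})}{n}=0,
\]
and since $\mathrm{ht}$ is non-negative this gives $\mathrm{ht}(\varphi_u)=0$.

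The main obstacle is the reduction step: I must verify that every $u\in\mathsf{G}_A$ really does give rise to a \emph{word-length-preserving} automorphism, i.e. that $\varphi_u$ does not spread the support of a monomial $S_\alpha S_\beta^*$ to longer words. This follows because $\varphi_u$ is defined on generators by a \emph{linear} formula $S_\alpha\mapsto\sum_{\alpha'}u_{\alpha',\alpha}S_{\alpha'}$ with no change in degree, so $\varphi_u(S_\alpha)=\sum_{|\alpha'|=|\alpha|}(\text{product of }u\text{-entries})S_{\alpha'}$ and likewise $\varphi_u(S_\beta^*)$; multiplying, the result lies in $\mathrm{span}\{S_{\alpha'}S_{\beta'}^*:|\alpha'|=|\alpha|,|\beta'|=|\beta|\}$. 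One should be slightly careful that the Cuntz--Krieger relation $S_i^*S_j=\delta_{ij}\sum A_{ij}S_kS_k^*$ might be needed to re-express products, but the point is that $\mathcal{F}_A^{(L)}$ (spanned by \emph{all} $S_\alpha S_\beta^*$ with $|\alpha|,|\beta|\le L$, after reduction using the relations) is a finite-dimensional $\varphi_u$-invariant $*$-subalgebra, and that is all the argument requires. A second, minor point worth recording is that $\mathrm{rank}(\mathcal{F}_A^{(L)})<\infty$ for each $L$ because $\mathcal{F}_A^{(L)}$ is finite-dimensional; this is immediate. Thus the proof is essentially a one-line consequence of finite-dimensional invariance once the reduction to monomials is in place.
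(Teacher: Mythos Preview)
Your strategy is essentially that of the paper, which cites \cite[Theorem~2.2]{skalzach}: the crux is that $\varphi_u$ preserves the word-length filtration, so the orbit $\Omega^{(n)}$ stays inside a fixed finite-dimensional subspace independent of $n$. Your $\varepsilon/3$-reduction to monomials of bounded length (using that $\varphi_u$ is isometric) and the verification that $\varphi_u$ maps $\mathrm{span}\{S_\alpha S_\beta^*:|\alpha|=m,\,|\beta|=k\}$ into itself are both correct.

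There is, however, a gap in the final step. The space $\mathcal{F}_A^{(L)}:=\mathrm{span}\{S_\alpha S_\beta^*:|\alpha|,|\beta|\le L\}$ is \emph{not} a $C^*$-subalgebra: already for $L=1$ it contains each $S_i$, yet $S_iS_j=S_{ij}$ has gauge-degree $2$ while every element of $\mathcal{F}_A^{(1)}$ has degree in $\{-1,0,1\}$, and no Cuntz--Krieger relation reduces $S_{ij}$ to shorter words. In fact the $*$-algebra generated by $\mathcal{F}_A^{(1)}$ is all of $\LC$, so no enlargement to a finite-dimensional subalgebra is available; consequently the proposed triple ``$C=\mathcal{F}_A^{(L)}$, $\varphi=\mathrm{id}$, $\psi=$ inclusion'' is not well-defined (and a map $\varphi:O_A\to C$ can never be the identity). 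The repair is a compactness argument: since $\varphi_u$ is isometric, $\Omega^{(n)}$ lies for every $n$ in the fixed norm-compact set $K=\{a\in\mathcal{F}_A^{(L)}:\|a\|\le\max_{b\in\Omega}\|b\|\}$. Choose a finite $\varepsilon/3$-net $F\subset K$; nuclearity of $O_A$ produces some $(\varphi,\psi,C)\in\mathrm{CPA}(O_A,F,\varepsilon/3)$, and contractivity of the ucp map $\psi\circ\varphi$ then gives $\|\psi(\varphi(a))-a\|<\varepsilon$ for all $a\in K$. Hence $\mathrm{rcp}(\Omega^{(n)},\varepsilon)\le\mathrm{rank}(C)$ uniformly in $n$, and the entropy vanishes as claimed.
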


\begin{proof}
One directly concludes the theorem for the Cuntz algebra from \cite[Theorem 2.2]{skalzach}. The general case follows from the same methods as in the proof of \cite[Theorem 2.2]{skalzach} using that $\mathsf{G}_A$ acts via elements of the matrix algebra in the core spanned by $\{S_iS_j^*: i,j=1,\ldots,N\}$.
\end{proof}

The construction of topological entropy of self-maps of a compact metric space can be found in for instance \cite[Chapter 3.1.b]{Katok_Has}, with the relation to the construction \eqref{vtopent} made clearer in \cite[Exercise 3.1.8 and 3.1.9]{Katok_Has}. By \cite[Proposition 4.8]{voicent}, if $B=C(X)$ for a compact metric space $X$ and $\alpha(f):=g^*(f)$ for a continuous map $g:X\to X$, then ${\rm ht}(\alpha)$ coincides with the topological entropy of $g$. In particular, if $g$ is isometric then ${\rm ht}(\alpha)=0$; this is clear from the construction in \cite[Chapter 3.1.b]{Katok_Has}. This well-known fact about isometries on compact metric spaces turns out to extend to isometries of Cuntz--Krieger algebras.

\begin{cor}
\label{cor: vanishihsidnadin}
The Voiculescu noncommutative topological entropy vanishes on $\mathsf{G}_D$. 
\end{cor}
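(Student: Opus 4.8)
The plan is to deduce this directly from Theorem~\ref{vanishihsidnadin} together with the identification of the isometry group carried out in Theorem~\ref{thm:Isom_group}. First I would recall that, by definition, $\mathsf{G}_D$ is a subgroup of $\Aut(O_A)$, and that the embedding $\iota:\mathsf{G}_D\hookrightarrow U(N)$ of \eqref{eq:embedding} is set up so that an element $\varphi\in\mathsf{G}_D$ acts on the generators by $\varphi(S_\alpha)=\sum_{\alpha'=1}^N \iota(\varphi)_{\alpha',\alpha}S_{\alpha'}$; in the notation preceding Proposition~\ref{prop:G_A} this is precisely the statement $\varphi=\varphi_{\iota(\varphi)}$.

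Next I would invoke Theorem~\ref{thm:Isom_group}, which gives $\iota(\mathsf{G}_D)=\mathbb T^N\rtimes\Aut(A)$, together with the observation made just before that theorem that $\mathbb T^N\rtimes\Aut(A)\subseteq\mathsf{G}_A$. Combining the two, every $\varphi\in\mathsf{G}_D$ is of the form $\varphi_u$ for some $u\in\mathsf{G}_A$. Theorem~\ref{vanishihsidnadin} then applies and yields ${\rm ht}(\varphi)={\rm ht}(\varphi_u)=0$. Since $\varphi\in\mathsf{G}_D$ was arbitrary, Voiculescu's topological entropy vanishes on all of $\mathsf{G}_D$, which is the assertion.

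There is essentially no obstacle here: the entire substance is already contained in Theorems~\ref{thm:Isom_group} and~\ref{vanishihsidnadin}, and the corollary is their routine assembly. The one point worth a clause of care is that the entropy ${\rm ht}(\cdot)$ of \eqref{vtopent} is an invariant of the automorphism of $O_A$ itself and does not depend on the choice of unitary implementer on $L^2(G_A,\mu_{G_A})$, so passing between the description of $\mathsf{G}_D$ as a group of automorphisms and its image under $\iota$ in $U(N)$ is harmless.
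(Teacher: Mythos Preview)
Your proposal is correct and follows exactly the paper's approach: the paper's proof is the single sentence ``This now follows from Theorems~\ref{vanishihsidnadin} and~\ref{thm:Isom_group},'' and you have merely spelled out the two-line assembly in more detail.
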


\begin{proof}
This now follows from Theorems \ref{vanishihsidnadin} and \ref{thm:Isom_group}.
\end{proof}

\end{document}